\newcommand{\Abracket}[1]{\left<#1\right>} 
\newcommand{\parenthesis}[1]{\left(#1\right)} 
\newcommand{\braces}[1]{\left\{#1\right\}} 
\newcommand{\R}{\mathbb{R}}
\newcommand{\C}{\mathbb{C}}
\newcommand{\cH}{\mathcal{H}}
\newcommand{\cO}{\mathcal{O}}
\newcommand{\cZ}{\mathcal{Z}}
\newcommand{\dd}{\mathop{}\!\mathrm{d}}
\newcommand{\eps}{\varepsilon}
\newcommand{\p}{\partial}
\newcommand{\sph}{\mathbb{S}}
\newcommand{\D}{\slashed{D}}
\newcommand{\G}{\slashed{G}}
\newcommand{\snabla}{\slashed{\nabla}} 
\numberwithin{equation}{section}
\newcommand{\CLip}{C_0}
\newcommand{\CCoord}{C_2}
\newcommand{\rCoord}{r_2}
\newcommand{\CHardy}{C_3}
\newcommand{\rHardy}{r_3}
\newcommand{\CPos}{C_4}
\newcommand{\rPos}{r_4}
\newcommand{\CAM}{C_{\textrm{am}}}
\newcommand{\rAM}{r_{\textrm{am}}}
\newcommand{\CbL}{C_{5}}
\newcommand{\Cq}{C_q}
\newcommand{\Cf}{C_f}
\DeclareMathOperator{\Cl}{Cl}
\DeclareMathOperator{\diag}{diag}
\DeclareMathOperator{\dv}{\dd{vol}}
\DeclareMathOperator{\ds}{\dd{s}}
\DeclareMathOperator{\dx}{dx}
\DeclareMathOperator{\diverg}{div}
\DeclareMathOperator{\dist}{dist}
\DeclareMathOperator{\End}{End}
\DeclareMathOperator{\id}{Id}
\DeclareMathOperator{\Span}{Span}
\DeclareMathOperator{\Scal}{Scal}
\DeclareMathOperator{\Spin}{Spin}
\DeclareMathOperator{\SO}{SO}
\newtheorem{thm}{Theorem}[section]
\newtheorem{dfn}[thm]{Definition}
\newtheorem{lemma}[thm]{Lemma}
\newtheorem{prop}[thm]{Proposition}
\newtheorem{cor}[thm]{Corollary}
\newtheorem{rmk}[]{Remark}
\begin{document}
\title{On the nodal set of solutions to Dirac equations}
\author[W. Borrelli and R. Wu]{William Borrelli$^{(1)}$ \& Ruijun Wu$^{(2)}$}
\addtocounter{footnote}{1}
\footnotetext{Dipartimento di Matematica, Politecnico di Milano, Piazza Leonardo da Vinci, 32, I-20133, Milano, Italy. E-mail address: {\tt william.borrelli@polimi.it}}
\addtocounter{footnote}{1}
\footnotetext{School of Mathematics and Statistics, Beijing Institute of Technology, Zhongguancun South St.5, 100081, Beijing, P.R.China. E-mail address:
{\tt ruijun.wu@bit.edu.cn}}

\begin{abstract} 
Motivated by various geometric problems, we study the nodal set of solutions to Dirac equations on manifolds, of general form. 
We prove that such set has Hausdorff dimension less than or equal to~$n-2$, $n$ being the ambient dimension. 
We extend this result, previously known only in the smooth case or in specific cases, working with locally Lipschitz coefficients.
Under some additional, but still quite general, structural assumptions we provide a stratification result for the nodal set, which appears to be new already in the smooth case. 
This is achieved by exploiting the properties of a suitable Almgren-type frequency function, which is of independent interest.
\end{abstract}
\maketitle

{\footnotesize
\emph{Keywords}: Dirac equation, nodal set, Hausdorff dimension, stratification, frequency function.

\medskip

\emph{2020 MSC}: 58J05, 35J46, 58J60.
}

\section{Introduction}\label{sec:intro}

We are concerned with the nodal set of spinors solving Dirac equations of the form
\begin{align}\label{eq:Dirac eq-V}
 \D_g\psi= V(\psi)
\end{align}
on a spin Riemannian manifold~$(M,g)$. 
The right hand side above takes different forms in various contexts, ranging from supersymmetric quantum field theory, conformal geometry, surface theory and submanifolds theory to positive mass theorems, spinorial Yamabe problems, spectral analysis of Dirac operators, and so on. Some examples are briefly sketched below. 
It is thus desirable to have good estimates about the nodal sets of such spinor solutions, especially in geometric applications, since nodal sets typically contain important geometric informations about the underlying manifold and on the solutions themselves. 
The analysis of the nodal sets in concrete backgrounds is carried out, for instance, in \cite{Ammann2003Habil, BMW2021ground, Branding2018nodal, Branding2018note, Xu2018conformal} and the only general results known to us was due to Christian B\"ar \cite{Baer1997nodal, Baer1999zero} which deals with \emph{smooth} right hand side, also considering more general first-order elliptic operators. 
He proves sharp  estimates for the Hausdorff dimension of the nodal set, and provides an upper bound for the Hausdorff densities.
However, in various geometric settings, especially in high dimensions, the right hand side is usually nonsmooth, so that B\"ar's results do not apply. This was the case, for instance, in \cite{BMW2021ground}, where classification of ground state solutions of the critical Dirac equation on the round sphere (also known as \emph{Dirac bubbles}) is considered. 
In that case a capacity argument is used in order to handle the nodal set. 
Generalizing the result of that paper was our original motivation for the present analysis.
 
Thus here we aim at studying the nodal set of spinors to~\eqref{eq:Dirac eq-V} with $V$ not necessarily smooth, extending to that case the dimension estimate available in the smooth case, as stated in Theorem \ref{thm:DimEstimate}. 
Moreover, under additional assumptions on $V$ we can also prove a stratification result in Theorem \ref{thm:Stratification}, which appears to be new also in the case of smooth potentials. 
This is proved by exploiting the properties of a suitable Almgren-type frequency function \eqref{eq:freq}, studied in the paper, which we think can be of independent interest. 
Its main property is stated in Theorem \ref{thm:AlmostMonotonicity}. Before stating our main results, we quickly recall some well-known examples to which they apply.

\subsection{Dirac equations in physics and geometry}

\subsubsection*{Spinorial Yamabe equation} 

This equation arises in the study of the spinorial Yamabe problem~\cite{ AGHM2008spinorial, AHM2006mass} and takes the form 
\begin{equation}\label{eq:spinYamabe}
 \D\psi = |\psi|^{\frac{2}{n-1}}\psi\,. 
\end{equation}
Its solutions are related to a conformal invariant analogous to the classical Yamabe invariant. 
Typically, one needs to make a conformal transformation with conformal factor proportional to $|\psi|^{\frac{4}{n-1}}$, see~\cite{Ammann2003Habil, BMW2021ground}, so that the zeros of the spinor~$\psi$ lead to a degeneracy of the metric, which is (ideally) to be excluded. 
Such singularity is removable, or negligible in the Sobolev sense, if the nodal set is sufficiently small, for instance, in capacity sense \cite{BMW2021ground}. For this reason, it is desirable to have fine estimates on the nodal set of the spinors. 
Note that the right hand side has the critical Sobolev nonlinearity. 
Moreover, for~$n\ge 2$, the nonlinearity on the right hand side \eqref{eq:spinYamabe} is non-smooth but superlinear, and for~$n\ge 4$ it is also sub-quadratic. 
This was our original motivation for the problem treated in this paper, and was addressed in \cite{BMW2021ground}.

We mention that generalizations of the spinorial  Yamabe equations are also widely studied in calculus of variations, e.g.~\cite{BartschXu2021spinorial,B2022sym, BF2020sharp, DingLiXu2016bifurcation, Grosse2012, SX2021inv}.  

\subsubsection*{Spinorial Weierstrass representation}

Another geometric application of Dirac equations is the famous Werierstrass representation of surfaces in~$R^3$ with prescribed mean curvature~$H\in C^\infty(M)$. 
It is well-known (see e.g.~\cite{Friedrich1998spinorrep, Matsutani1997cmc, Taimanov1997modified}) that such an immersion exists if and only if the Dirac equation 
\begin{align}\label{eq:H-Dirac}
 \D\psi = H\psi 
\end{align}
has a solution with constant length.
By a conformal transformation, we can put the equation into a nonlinear (indeed, superlinear) form similar to \eqref{eq:spinYamabe}, but then we only need the spinor to be nowhere vanishing, not necessarily of constant length. 
The spinorial Weierstrass representation has been greatly generalized to higher dimensional and codimensional cases, see~\cite{BBLR2023spinorial, BLR2017spinorial} and the references therein. 
The immersion induced from certain spinors satisfying various Dirac type equations still requires that the spinor to be non-vanishing. 
Even if we allow for branchings in the immersion, it is desirable that the nodal set is relatively small, say, of codimension at least two.

\subsubsection*{Eigenspinors and Yau's conjecture}

The eigenvalues of Dirac operators are of great importance in differential geometry and topology. 
Moreover, nodal set estimates for eigenspinors are also of interest in spectral geometry. 
A. Hermann \cite{Hermann2014zero} showed that on the sphere for a generic metric the eigenspinors have no zeros. 
Note that the eigenspinors can be dealt with using C. B\"ar's results~\cite{Baer1999zero}, since the right hand side is already smooth, and  the nodal set is rectifiable of codimension at least two. Our contribution, in this case, concerns the stratification of the nodal set.

The eigenfunctions of the Laplace operator on a Riemannian manifold have been studied for a long time, and there are also many recent results.  Among them, we mention the partial proof by A. Lugonov~\cite{Logunov2018nodal-upper, Logunov2018nodal-lower} of Yau's conjecture on the nodal sets, concerning lower and upper bounds on the Hausdorff measure. An analogous result for spinors was also covered in \cite{Baer1999zero}. This provides a further motivation for a more detailed investigation of the nodal sets of Dirac equations. 

\subsubsection*{Supersymmetric nonlinear sigma models}

A large class of interesting Dirac equations arise from the supersymmetric quantum field theory. 
For instance, in a two-dimensional supersymmetric nonlinear sigma model with gravitino, the fermionic field has to satisfy a critical nonlinear Dirac equation(see e.g.~\cite{Branding2015some,JKTWZ2018regularity} and references therein) of the form
\begin{align}\label{eq:SigmaModel}
 \D\psi= R(\phi, \psi,\psi)\psi \,.
\end{align}
Here~$\psi$ appearing twice in~$R(\phi,\psi,\psi)$ indicates that it is quadratic in~$\psi$.  
The above equation couples a spinor described by a Dirac equation with another elliptic equation in the map $\phi$ between manifolds.   
Moreover, the super-current, which is a conserved super-quantity, involves the solution spinor:
\begin{align}
 J= \parenthesis{ 2\Abracket{\dd\phi(e_j), \gamma(e_j)\gamma(e_k)\psi}+ |\psi|^2 \gamma(e_j)\gamma(e_k)\chi^j} e_k
\end{align}
where~$\phi$ is a map between manifolds and~$\chi$ stands for some auxiliary field, while~$(e_j)$ denotes a local orthonormal frame, see~\cite{JKTWZ2018symmetry}. 
In application, it is also desirable for the solution to have small nodal set, ideally isolated zeros.

\subsubsection*{Seiberg-Witten equations}
Another type of coupled equations involving Dirac operators comes from the Seiberg-Witten theory in dimension 3 and 4. In \cite{Taubes1996SW} pseudoholomorphic curves are obtained from the nodal set of harmonic spinors on symplectic 4-manifolds. This motivates the study of more detailed properties of the nodal set of some special class of spinors. In this model the spinor is coupled to (it is the so-called \emph{superpartner}) of a suitable gauge field. Geometrically, the latter corresponds to a connection $A$ on the spinor bundle. 
Depending on the geometry, the equation for the connection may take different forms, while the spinors are required to be harmonic:
\begin{align}
 \D_{g,A}\psi=0 \,.
\end{align}
The zero loci of the spinors, or more generally those of $\mathbb{Z}_2$-harmonic spinors, were analyzed in depth already by~\cite{DoanWalpuski2021existence, HaydysWalpuski2015compactness,Taubes2014zero} etc.

\subsubsection*{Dirac system with scalar potentials}
A particular case of \eqref{eq:Dirac eq-V} is given by
\begin{align}\label{eq:Dirac eq-f}
 \D\psi= f(\psi)\psi
\end{align}
where~$f \colon \Sigma_g M \to \R$ is a continuous function. 
This covers, for example, the already mentioned spinorial Yamabe problem \eqref{eq:spinYamabe} and the eigenspinor equation, as well as, for instance, the super Liouville systems \cite{JMW2020existence, JMW2021existence}. The case where $f$ does not depend on $\psi$, but it is only an endomorphism-valued  function on the manifold, is included in this case.
\smallskip

As already remarked, we point out that more general semilinear elliptic \emph{smooth} systems of first order are treated in \cite{Baer1999zero}, which also provides further applications.
\medskip

Notice that, in the above examples, the eigenspinor equation, the equations in supersymmetric nonlinear sigma model, and the Seiberg-Witten equation have a smooth structure, and then the dimension estimate for the nodal set follows by B\"ar's results \cite{Baer1997nodal, Baer1999zero}. In that case we prove a stratification result in Theorem \ref{thm:Stratification}, as well as for equations \eqref{eq:Dirac eq-V}, assuming that $V(\psi)$ satisfies some growth conditions near the zero section of the bundle. On the other hand, dimension estimates are proved under minimal assumptions in Theorem \ref{thm:DimEstimate}.


\subsection{Main results}

Let~$(M^n,g)$ be a complete smooth connected \emph{spin} Riemannian manifold of dimension~$n\ge 2$, with spinor bundle~$\Sigma_g M$. 
The Levi-Civita connection on tangent bundle induces a canonical spin connection~$\snabla^g$ or simply~$\snabla$ if there is no confusion. 
As a bundle of modules over the Clifford algebra bundle~$\Cl(M,g)$, there is a Clifford multiplication of tangent vectors on the spinor bundle, denoted by 
\begin{align}
 \gamma \colon TM \to \End(\Sigma_g M),\quad X\mapsto \gamma(X)
\end{align}
which satisfies the Clifford relation 
\begin{align}
 \gamma(X)\gamma(Y)+\gamma(Y)\gamma(X)= -2g(X,Y)\id_{\Sigma_g M}, \quad \forall X, Y \in \Gamma(TM). 
\end{align}
Via the Riemannian metric~$g$ we can identify the tangent bundle with the cotangent bundle, so that a cotangent vector, or even an exterior differential form, can be multiplied to a spinor. 
In particular, the composition of the following operators 
\begin{center}
 \begin{tikzcd}
  & \Gamma(\Sigma_g M) \arrow[r, "\snabla^g"]  
      & \Gamma(T^*M \otimes \Sigma_g M) \arrow[r, "g"]
        & \Gamma(TM\otimes \Sigma_g M)  \arrow[r, "\gamma"] 
           &\Gamma(\Sigma_g M)
 \end{tikzcd}
\end{center}
is the well-known Dirac operator, which is given in a local orthonormal frame~$(e_j)_{1\leq j\leq n}$ by
\begin{align}
 \D_g\psi=\gamma\circ g\circ \snabla\psi= \gamma\circ g \parenthesis{ \sum_{j=1}^n e^j\otimes \snabla_{e_j}\psi}
 =\gamma\parenthesis{ \sum_{j=1}^n e_j\otimes \snabla_{e_j}\psi}
 =\sum_{j=1}^n \gamma(e_j)\snabla_{e_j}\psi. 
\end{align}

Let~$V\colon \Sigma_g M \to \Sigma_g M$ be a fiber-preserving map which is locally Lipschtiz near the zero section and \emph{respects the zero section}, i.e.~$V(0)=0$.
In particular, for~$|\psi|$ small, 
\begin{align}
 |\D\psi|=|V(\psi)|\le \CLip |\psi|. 
\end{align}
The solutions of~\eqref{eq:Dirac eq-V} is of class~$C^{1,\alpha}$ for any~$\alpha\in (0,1)$ by the regularity theory of Dirac equations, see e.g.~\cite{Ammann2003Habil}. 
Hence, near a zero point~$x_0$ of a solution~$\psi$, we have 
\begin{align}
 |\D\psi(x)|\le \CLip |\psi| \le \CLip \dist(x, x_0)^{1+\alpha}.
\end{align}
Actually in applications we usually encounter the case that~$V(\psi)\precsim |\psi|^{p}$ for some~$p\ge 1$. 
\footnote{Most of our argument will not work for~$p\in (0,1)$. This interesting case will be addressed in a future work.}

For a solution~$\psi$ of~\eqref{eq:Dirac eq-V}, we are concerned with the zero sets
\begin{align}\label{eq:nodalset}
 \mathcal{Z}(\psi)\coloneqq \braces{ x\in M\mid \psi(x)=0}
\end{align}
and the singular sets 
\begin{align}
 \mathcal{S}(\psi)\coloneqq \braces{x\in M\mid \psi(x)=0, \; \; \snabla\psi(x)=0}.  
\end{align}
We are now in a position to state the first main result of the present paper.

\begin{thm}\label{thm:DimEstimate}
Let $V\colon \Sigma_g M \to \Sigma_g M$ be a fiber-preserving map which is locally Lipschitz near the zero section and respects the zero section. 
Then the nodal set \eqref{eq:nodalset} of a nontrivial solution $\psi\in C^{1,\alpha}(\Sigma^g M)$, $\alpha\in (0,1)$, to \eqref{eq:Dirac eq-V} has Hausdorff dimension less than or equal to~$n-2$.
\end{thm}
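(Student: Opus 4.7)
The plan is to follow the classical blow-up / tangent spinor approach of B\"ar, adapted to the locally Lipschitz setting by using only the pointwise bound $|\D\psi|\le \CLip|\psi|$ near the nodal set in place of smoothness of $V$. Three pieces are needed: strong unique continuation (finite order of vanishing), blow-up to a polynomial tangent spinor, and an algebraic bound on the zero set of that tangent spinor.

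For the first step, applying the Lichnerowicz formula $\D^2=\snabla^*\snabla+\Scal/4$ to $\D\psi=V(\psi)$ and using that $V$ is locally Lipschitz yields a pointwise Aronszajn-type inequality $|\snabla^*\snabla \psi|\le C(|\snabla\psi|+|\psi|)$ near $\cZ(\psi)$. Since $\snabla^*\snabla$ is a connection Laplacian, standard Carleman estimates give strong unique continuation: any $\psi$ vanishing to infinite order at a point vanishes identically on $M$. Hence at each $x_0\in\cZ(\psi)$ a finite order of vanishing $N(x_0)\in\bN$ is well defined, with $N(x_0)\ge 1$.

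For the second step, in normal coordinates at $x_0$ set
$$\psi_r(y) := H(r)^{-1/2}\,\psi(\exp_{x_0}(ry)), \qquad H(r) := r^{1-n}\int_{\partial B_r(x_0)}|\psi|^2\,d\sigma.$$
The rescaled metrics $r^{-2}\exp_{x_0}^*g$ converge to the flat metric on $T_{x_0}M\simeq \R^n$, and the equation for $\psi_r$ has right-hand side of order $O(r)$ after the $L^2$-normalization. Schauder estimates for Dirac operators together with compactness then give subsequential $C^{1,\alpha}_{\mathrm{loc}}$ convergence $\psi_{r_k}\to\psi_0\not\equiv 0$ with $\D_0\psi_0=0$ on flat $\R^n$. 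The finite vanishing order, together with a doubling consequence $H(\lambda r)\sim\lambda^{2N(x_0)-1+n}H(r)$, forces $\psi_0$ to be a nontrivial homogeneous polynomial spinor of degree exactly $N(x_0)$.

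For the third step, a nonzero polynomial solution $\psi_0$ of $\D_0\psi_0=0$ on $\R^n$ cannot vanish on an open subset of a smooth hypersurface: the principal symbol $\gamma(\nu)$ of $\D_0$ is invertible for $\nu\ne 0$, so the associated Cauchy problem is well-posed and vanishing on a hypersurface would force $\psi_0\equiv 0$ by real analyticity. Hence $\cZ(\psi_0)$ is a real algebraic variety of codimension $\ge 2$. Decomposing
$$\cZ(\psi)=\bigcup_{N\ge 1}\cZ_N, \qquad \cZ_N:=\braces{x\in\cZ(\psi):N(x)=N},$$
the quantitative $C^{1,\alpha}$-closeness of $\psi$ to its tangent spinor near each point of $\cZ_N$ allows covering $\cZ_N$ at every scale by thin tubes around the nodal cone of $\psi_0$; a Federer-type covering argument then yields $\dim_{\mathcal H}\cZ_N\le n-2$, and countable union preserves the bound. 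The main obstacle is the first step in the Lipschitz-only regime: one must both prove Aronszajn-type strong unique continuation and obtain a frequency-monotonicity lower bound strong enough to make $H(r)$ scale like $r^{2N-1+n}$ up to controlled multiplicative error, so that blow-ups are genuinely nondegenerate. This is exactly where the Almgren-type frequency function of Theorem~\ref{thm:AlmostMonotonicity} plays its role in the background: although introduced with the stratification result in mind, it supplies uniformly both the finite vanishing order and the nondegeneracy of the blow-up limit needed here.
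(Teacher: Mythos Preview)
There is a genuine gap. Your argument relies on the almost-monotonicity of the frequency function (Theorem~\ref{thm:AlmostMonotonicity}) to supply both finite vanishing order and the doubling estimate $H(\lambda r)\sim \lambda^{2N+n-1}H(r)$ needed for nondegeneracy of the blow-up. But Theorem~\ref{thm:AlmostMonotonicity} is proved only under the extra structural hypotheses \eqref{eq:Vf} or \eqref{eq:Vq}; with $V$ merely locally Lipschitz, the key manipulations in Section~\ref{sec:freq}---the vanishing of the boundary term $\langle\gamma(\nu)\D\psi,\psi\rangle$ in case \eqref{eq:Vf}, and the pointwise bound $|\D V(\psi)|\le C_q|\psi|(1+|\snabla\psi|)$ in case \eqref{eq:Vq}---are simply not available. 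So as written your proof only covers the setting of Theorem~\ref{thm:Stratification}, not Theorem~\ref{thm:DimEstimate}. (Your Step~1 is salvageable: with $V$ Lipschitz and $\psi\in C^{1,\alpha}$, Rademacher plus the Lipschitz chain rule give $|\D V(\psi)|\le C|\snabla\psi|$ a.e., which is enough for Aronszajn--Carleman unique continuation for the connection Laplacian; but this weaker bound, lacking the extra factor of $|\psi|$, does not feed into the paper's frequency argument and does not by itself give the doubling you need in Step~2.)

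The paper avoids the frequency function entirely for Theorem~\ref{thm:DimEstimate}. It uses the Euclidean Green function of $\D_{g_0}$ and the spinorial Newton formula~\eqref{eq:spinNewton} to produce directly a local decomposition $\varphi=P_k+Q_k$ (Proposition~\ref{prop:decomp-k}), where $P_k$ is a nonzero $\D_{g_0}$-harmonic spinor with homogeneous harmonic polynomial components; this requires only $|\D_{g_0}\varphi|\le C|\varphi|$, i.e.\ exactly the Lipschitz hypothesis. The nodal set is then split as $\cZ_1\cup\cZ_{\ge2}$. On $\cZ_1$ a Clifford-algebra argument (invertibility of $\gamma_{g_0}(\partial_1)$ together with $\D_{g_0}\varphi(0)=0$) forces at least two components of $P_1$ to be linearly independent, so the implicit function theorem puts $\cZ_1$ locally inside an $(n-2)$-dimensional $C^{1,\alpha}$ submanifold. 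On $\cZ_{\ge2}$ a Caffarelli--Friedman cusp-covering lemma (Lemma~\ref{lem:cusplike}) and an iterated cube-counting give $\mathcal H^{n-2+\gamma}(\cZ_k)=0$ for every $k\ge 2$ and $\gamma>0$. The Green-function expansion thus plays the role of your blow-up tangent spinor, but is obtained without any frequency or doubling input and hence under the weaker hypotheses.
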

\begin{rmk}
In the above theorem (and in the other main results) we deal with $C^{1,\alpha}$ solutions to \eqref{eq:Dirac eq-V} as, for instance, we get that regularity if we start from a weak solution in the Sobolev space $H^{1/2}$, thanks to Schauder estimates.
\end{rmk}
\begin{rmk}
As pointed out in \cite[Sec. 3]{Baer1999zero}, the dimension estimate in \ref{thm:DimEstimate} is sharp. 
\end{rmk}
As already remarked, the above result generalizes the analogous statements in \cite{Baer1997nodal, Baer1999zero}, proved in the smooth case, and in \cite{BMW2021ground}, where the model nonlinearity $V(\psi)=\vert\psi\vert^{2/(n-1)}\psi$ on the round $n$-sphere is considered. Working with limited regularity one cannot, roughly speaking, take smooth expansions and argue correspondingly. This is done in B\"ar's papers, where a version of the Malgrange preparation theorem is used. 
As in \cite{BMW2021ground}, our argument relies on a local expansion of the Green function the Dirac operator; in particular, the Euclidean ones are sufficient for our purposes, see Section~\ref{sec:localexpansion}. 
We also exploit some ideas from the scalar case, contained in \cite{CaffarelliFriedman1979free, CaffarelliFriedman1985partial}.

\medskip

In order to proceed further, we need to impose some constraints on the map $V$. We consider the following cases, where we say that
\begin{itemize}
 \item $V$ is of type (Vf) if 
        \begin{align}\label{eq:Vf} \tag{Vf}
         V(\psi)= f(\psi)\psi
        \end{align}
        for some function $f\colon \Sigma_g M\to \R$ satisfying 
        \begin{align}\label{eq:f1} \tag{DV-f}
         |\D V(\psi)| \le C_f (|\psi|+ |\psi|^\tau|\snabla\psi|),  \qquad \mbox{ for some } \tau= \tau(f) \in (0,1);
        \end{align}

 \item $V$ is of type (Vq) if 
        \begin{align}\label{eq:Vq}\tag{Vq}
         |\D V(\psi)| \le \Cq \parenthesis{|\psi| + |\psi||\snabla \psi|};
        \end{align}
         

\end{itemize}

Observe that in the proofs we have to treat the above cases separately when dealing with the frequency function. 
Note that the \eqref{eq:Vq} case allows~$V$ to be locally (near the zero section) quadratic or super-quadratic, namely~$V(\psi)\precsim|\psi|^\sigma$ for some~$\sigma\ge 2$, and since~$\psi\in C^{1,\alpha}$,~$|\snabla\psi|$ is locally bounded, so that locally in a compact subset~$K$ we have 
\begin{align}
 |\D V(\psi)|\le \Cq (1+ \|\snabla\psi\|_{L^\infty(K)} ) |\psi|. 
\end{align}
This will be used later. 
When the potential part is of scalar type \eqref{eq:Vf} the right hand side can be sub-quadratic. 
It it worth noting that the case \eqref{eq:Vf} allows $f$ to be a function defined on~$M$ and independent of the spinor fiber, namely 
\begin{align}
 \D\psi = h \psi
\end{align}
for some~$h\in C^1(M;\R)$.
This takes care of~\eqref{eq:H-Dirac}, and when~$h=\lambda\in \R$ is a constant, this reduces to the eigenspinor equation.

\

In these cases, the result in Theorem \ref{thm:DimEstimate} can be improved. 
Indeed the zero set can be stratified according to the symmetries of the homogeneous blowup, as described in Section \ref{sec:stratification}. This kind of result is well-established for scalar functions satisfying elliptic second order equations,for which we refere to reader to e.g.~\cite{GarofaloLin1986monotonicity,Han1994singular}.
\medskip

\textbf{From now on we assume the manifold $M$ to be compact, for simplicity}.
\medskip 

Indeed, some of the quantities involved in the proofs of the next results rely on the geometry of the manifold and they are finite, for instance, if the manifold is compact. An example is given by the constants $C_{\textrm{am}}, r_{\textrm{am}}>0$ appearing in the statement of Theorem \ref{thm:AlmostMonotonicity}. More generally, the results below also applies when $(M,g)$ is of bounded geometry, that is, when $g$ is complete and the Riemann curvature tensor and all its covariant derivatives are bounded. This can be checked by inspection of the proofs.

\medskip

Observe that, since a priori the solution is not guaranteed to be smooth, we cannot define the vanishing order at a point by checking the lowest order of non-vanishing derivatives. 
We rather do so via a variant of the so-called \emph{frequency function}:
\begin{equation}\label{eq:freq}
 N(x,r) = \frac{r \int_{\p B_r(x)} \Abracket{\snabla_\nu \psi, \psi}\ds_r}{\int_{\p B_r(x)} |\psi|^2 \ds_r}\,,
\end{equation}
defined for $r>0$ small, where~$\p B_r(x)$ denotes the geodesic sphere in normal coordinates centered at~$x\in \mathcal{Z}(\psi)$. 
For harmonic functions, this corresponds to the frequency function used by Almgren in \cite{Almgren1979Dirichlet}  to study minimal submanifolds. The choice of the particular form of frequency function~$N(x,r)$ is partially inspired from \cite{GarofaloLin1986monotonicity}, meanwhile there are other formulation of frequency functions for spinors solving various Dirac-type equations, see~\cite{DoanWalpuski2021existence, HaydysWalpuski2015compactness, Taubes2014zero} and the references therein. 
The most significant property of the frequency function \eqref{eq:freq}, for our purposes, is the contained in the following result.

\begin{thm}\label{thm:AlmostMonotonicity}
Let $\psi\in C^{1,\alpha}$ be a solution to \eqref{eq:Dirac eq-V}, with $V$ as in \ref{thm:DimEstimate} satisfying \eqref{eq:Vf} or \eqref{eq:Vq}. For any~$C_N>0$, there exist~$\beta\in (0,1)$,~$ \rAM>0$ and~$\CAM>0$, all depending only on the geometry of~$M$ and on $V$ but independent of the solution~$\psi$, such that the function 
 \begin{align}
  (0, \rAM)\ni s\mapsto \exp\parenthesis{\frac{\CAM}{\beta+1} s^{\beta+1}} (N(x,s)+ C_N) 
 \end{align}
 is non-decreasing. 
\end{thm}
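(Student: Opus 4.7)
The plan is to adapt the classical Almgren frequency scheme to the Dirac setting. I would set
\begin{align}
 H(r) = \int_{\p B_r(x)}|\psi|^2 \ds_r, \qquad D(r) = \int_{\p B_r(x)}\Abracket{\snabla_\nu\psi,\psi}\ds_r,
\end{align}
so that $N(x,r) = r D(r)/H(r)$, and work with the logarithmic derivative $(\log N)' = 1/r + D'/D - H'/H$. First, in geodesic polar coordinates centered at $x\in\cZ(\psi)$, the expansion of the metric volume element (whose $O(r^{2})$ deviation from the Euclidean one is controlled by $\Ric_g$) yields
\begin{align}
 H'(r) = \frac{n-1}{r}H(r) + 2D(r) + O(r)H(r).
\end{align}

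Next I would rewrite $D(r)$ via integration by parts. Combining $\tfrac12 \Delta_g |\psi|^2 = |\snabla\psi|^2 - \mathrm{Re}\Abracket{\snabla^*\snabla\psi,\psi}$ with the Schr\"odinger--Lichnerowicz formula $\snabla^*\snabla\psi = \D^2\psi - \tfrac{\Scal_g}{4}\psi$ and the identity $\D^2\psi = \D V(\psi)$, one obtains
\begin{align}
 D(r) = \int_{B_r}|\snabla\psi|^2 + \frac{1}{4}\int_{B_r}\Scal_g\,|\psi|^2 - \int_{B_r}\mathrm{Re}\Abracket{\D V(\psi),\psi}.
\end{align}
The scalar curvature term is bounded by $CH(r)$ via a spinorial Hardy-type inequality. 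For the potential term, case \eqref{eq:Vq} gives $|\Abracket{\D V(\psi),\psi}|\le \Cq(|\psi|^2 + |\psi|^2|\snabla\psi|)\le C|\psi|^2$ by using the global $L^\infty$ bound on $|\snabla\psi|$ available on the compact manifold; case \eqref{eq:Vf} requires splitting $|\psi|^{1+\tau}|\snabla\psi|\le |\psi|^2 + |\psi|^{2\tau}|\snabla\psi|^2$ by Young, after which the pointwise $C^{1,\alpha}$ decay $|\psi(y)|\le C\dist(y,x)^{1+\alpha}$ produces a prefactor $r^{2\tau(1+\alpha)}$. In both situations this yields an error bound of the shape
\begin{align}
 \left| D(r) - \int_{B_r}|\snabla\psi|^2 \right| \le C r^\beta \parenthesis{D(r) + H(r)}
\end{align}
for some $\beta\in(0,1)$ depending only on $\alpha$, the geometry, and (in case \eqref{eq:Vf}) on $\tau$.

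The second key ingredient is a Pohozaev--Rellich type identity for the Dirac operator. Testing $\D\psi = V(\psi)$ against $\gamma(X)\snabla_X\psi$ for the radial vector field $X = r\p_r$ (equivalently, integrating the divergence of $X|\snabla\psi|^2$ and absorbing commutator terms using the equation together with the curvature of the spin connection) produces an identity of the form
\begin{align}
 \int_{\p B_r} r|\snabla\psi|^2 = 2r\int_{\p B_r}|\snabla_\nu\psi|^2 + (n-2)\int_{B_r}|\snabla\psi|^2 + \mathcal{R}(r),
\end{align}
with $|\mathcal{R}(r)|\le Cr^\beta D(r)$ after estimating the $V$- and $\Scal_g$-contributions as above. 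Differentiating the expression for $D(r)$ gives $D'(r)$ in terms of $\int_{\p B_r}|\snabla\psi|^2$, and Cauchy--Schwarz yields $\int_{\p B_r}|\snabla_\nu\psi|^2 \cdot H(r) \ge D(r)^2 = N(r)^2 H(r)^2/r^2$. Plugging all this into $(\log N)'$ and collecting the error terms produces
\begin{align}
 N'(r) \ge -\CAM\, r^\beta \parenthesis{N(r) + C_N}, \qquad 0<r<\rAM,
\end{align}
which is equivalent to the stated non-decrease of $s\mapsto \exp(\CAM s^{\beta+1}/(\beta+1))(N(s)+C_N)$.

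The main technical obstacle is the derivation of the Dirac Pohozaev identity in curved background with careful error control: unlike the scalar Laplacian, Clifford multiplication does not commute with $\snabla$ (the defect being the spin curvature), so the naive Rellich computation generates additional bulk terms that must be collected and estimated in a form compatible with the $r^\beta$ smallness. A second delicate point is the contrast between \eqref{eq:Vq} and \eqref{eq:Vf} at the level of error estimates; the latter offers only H\"older-type control and therefore requires the combination of Young's inequality with the $C^{1,\alpha}$ decay of $\psi$ near its zero, in order to produce a genuinely vanishing $r^\beta$ factor rather than a mere bounded constant. Finally, the additive correction $C_N$ is what absorbs the inhomogeneous piece of the differential inequality, making the estimate robust when $N(r)$ itself is small.
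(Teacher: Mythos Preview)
Your proposal is essentially the same scheme the paper uses: the same choice of $H$ and $D$, the $H'$ formula via the polar expansion of the volume element, the rewriting of $D$ through Lichnerowicz, a spinorial Pohozaev identity, Cauchy--Schwarz for $D^2\le H\int_{\p B_r}|\snabla_\nu\psi|^2$, and a Hardy-type inequality to absorb the bulk $|\psi|^2$ terms, with the (Vf)/(Vq) split handled as you describe.

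Two points where the paper is more explicit and where your write-up is a bit loose. First, the paper derives the Pohozaev identity not by pairing $\D\psi$ with $\gamma(X)\snabla_X\psi$ but by computing the divergence of the vector field $|\snabla\psi|^2\vec{x}-2\Abracket{\snabla_{\vec{x}}\psi,\snabla\psi}-(n-2)\Abracket{\psi,\snabla\psi}$ and expressing the result in terms of $\snabla^*\snabla\psi$; this matches your parenthetical ``equivalently'' remark, and is the route that actually closes. Second, your error bounds of the form $|\mathcal R(r)|\le Cr^\beta D(r)$ and $\big|D(r)-\int_{B_r}|\snabla\psi|^2\big|\le Cr^\beta(D(r)+H(r))$ are circular as stated, since $D(r)$ is not known to be nonnegative. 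The paper resolves this by first proving an ``almost positivity'' estimate $rD(r)+C_NH(r)\ge \CPos\big(r\int_{B_r}|\snabla\psi|^2+H(r)\big)$ for $r$ small, and then controlling all error terms by the pair $\big(\int_{B_r}|\snabla\psi|^2,\,H(r)\big)$ rather than by $D(r)$ itself; only afterwards does one convert back via almost positivity. Your final remark about $C_N$ indicates you see this, but in the body of the argument the bounds should be phrased against $\int_{B_r}|\snabla\psi|^2$ and $H(r)$, not $D(r)$. With that adjustment the proof goes through exactly as in the paper.
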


This is generally know as the \emph{almost monotonicty} of the frequency function, which is the reason for the subscript in $\CAM$.
Here it is proved for a non-homogenous equation and on a general manifold.  
For harmonic functions and harmonic spinors in Euclidean domains the corresponding frequency function can be easily shown to be monotone increasing, without need to add the constant~$C_N$. The property for harmonic functions is well-known (see e.g. \cite{Almgren1979Dirichlet}), while the same fact for harmonic spinors on manifolds can be obtained arguing as in \cite{HaydysWalpuski2015compactness}, working with their choice of the frequency function. 

\

The almost monotonicity of $N$ allows to show that the vanishing order at any point~$x\in \mathcal{Z}(\psi)$ is well-defined and equals
\begin{align}
 N(x,0)\coloneqq \lim_{r\to 0+} N(x,r)=\lim_{r\to 0+}  \exp\parenthesis{\frac{\CAM}{\beta+1} s^{\beta+1}} (N(x,s)+ C_N)-C_N.
\end{align}
Moreover, for compact manifolds, we have 
\begin{prop}\label{prop:UniformBound}
 Let~$\psi$ be a solution of~\eqref{eq:Dirac eq-V} under the assumptions of \ref{thm:AlmostMonotonicity}. 
 Then the set 
 \begin{align}
  \braces{ N(x,0)\; \mid \; x\in M}
 \end{align}
 is uniformly bounded in~$\R_{\ge 0}$. 
\end{prop}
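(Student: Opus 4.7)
The strategy is to use Theorem \ref{thm:AlmostMonotonicity} to reduce a uniform bound on $\{N(x,0)\}$ to a uniform bound on $N(x,r_0)$ at a single positive scale $r_0$, and then to exploit the compactness of $M$ together with continuity of the ingredients of \eqref{eq:freq} in the base point. I would argue by contradiction.

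The first step is to let $s\to 0^+$ in the monotone quantity of Theorem \ref{thm:AlmostMonotonicity}: for every $x\in M$ and every $r\in (0,\rAM)$,
\begin{align}
 N(x,0)+C_N \,\le\, \exp\!\left(\tfrac{\CAM}{\beta+1}\,r^{\beta+1}\right)\bigl(N(x,r)+C_N\bigr).
\end{align}
Hence it suffices to bound $\sup_{x\in M} N(x,r_0)$ for some fixed $r_0\in (0,\rAM)$. Suppose, towards a contradiction, that $N(x_k,0)\to\infty$ for some sequence $(x_k)\subset M$; by compactness, we may pass to a subsequence with $x_k\to x_\infty\in M$, and the above inequality then forces $N(x_k,r)\to\infty$ for every $r\in (0,\rAM)$.

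On the other hand, since $\psi\not\equiv 0$ and the pointwise bound $|\D\psi|\le \CLip|\psi|$ holds near the zero section, the strong unique continuation property for Dirac operators (see e.g.~\cite{Baer1999zero}) excludes $\psi$ from vanishing identically on any open set, in particular on $B_{\rAM}(x_\infty)$. I can therefore choose $r_0\in (0,\rAM)$ such that
\begin{align}
 H(x_\infty,r_0) \coloneqq \int_{\p B_{r_0}(x_\infty)} |\psi|^2\, \ds_{r_0} \,>\, 0.
\end{align}
Because $\psi\in C^{1,\alpha}$ and the metric is smooth, parameterizing the geodesic spheres $\p B_{r_0}(x)$ via the exponential map shows that, at this fixed scale, both the numerator and the denominator of $N(\,\cdot\,,r_0)$ depend continuously on the base point in a neighbourhood of $x_\infty$. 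Consequently $N(x_k,r_0)\to N(x_\infty,r_0)<\infty$, which contradicts the divergence established in the previous step and yields the claim.

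The principal obstacle I anticipate is ensuring that the denominator of the frequency function does not collapse at the accumulation point $x_\infty$, which is exactly what strong unique continuation provides; the remaining steps are routine continuity/compactness arguments under the $C^{1,\alpha}$ regularity of $\psi$. The approach works uniformly in both cases \eqref{eq:Vf} and \eqref{eq:Vq}, since the almost monotonicity and the Lipschitz bound near the zero section are available in either case.
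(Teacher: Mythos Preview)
Your argument is correct and in fact considerably shorter than the paper's. The paper establishes Proposition~\ref{prop:UniformBound} through Lemma~\ref{lem:Nunifbound}, which is a quantitative local estimate of the form $N(x,r)\le C_4\bigl(N(x_0,\rAM)+1\bigr)$ for all $x\in B_{\rAM/4}(x_0)$; its proof is a doubling-type comparison of $L^2$-masses over geodesic balls with shifted centers (combining the formula for $H'/H$, the almost monotonicity, and several changes of variables), followed by a finite covering of $M$. Your route bypasses this computation entirely: you use almost monotonicity only to pass from scale $0$ to a fixed positive scale $r_0$, and then invoke continuity of $x\mapsto H(x,r_0)$ and $x\mapsto D(x,r_0)$ together with $H(x_\infty,r_0)>0$ to reach a contradiction. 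Two minor remarks: first, since the almost monotonicity in the paper is proved at points of $\mathcal{Z}(\psi)$, you should note that if $N(x_k,0)\to\infty$ then $x_k\in\mathcal{Z}(\psi)$ for large $k$ (as $N(x,0)=0$ at non-zeros), which you implicitly use; second, rather than citing \cite{Baer1999zero}, you may appeal to the paper's own unique continuation argument in Section~\ref{sec:freq}, which already relies only on Theorem~\ref{thm:AlmostMonotonicity} and applies under the present regularity assumptions. The trade-off is that the paper's approach yields an explicit bound in terms of $N(x_0,\rAM)$ at reference points, which can be useful for further quantitative applications, whereas your compactness argument is non-constructive.
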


In particular, this helps us to get rid of the set of points where the solution vanishes to infinite order (denoted by $\mathcal{N}_{\infty}$ in~\cite{Baer1999zero}). This is a desirable property, as  we are dealing with the spin Dirac operator, which is a geometric elliptic operator. 
With a uniform bound on the vanishing orders at hand, starting from some ideas used for the scalar case in \cite{Han1994singular}, we can establish the following stratification result.

\begin{thm}\label{thm:Stratification}
Let $V\colon \Sigma_g M \to \Sigma_g M$ be a fiber-preserving map which is locally Lipschitz near the zero section and respects the zero section, and be of type \eqref{eq:Vf} or \eqref{eq:Vq}. 
Then the nodal set $\cZ(\psi)$ of a solution to \eqref{eq:Dirac eq-V} is countably $(n-2)$-rectifiable. More precisely, it can be decomposed as
\begin{equation}\label{eq:Zdec}
\cZ(\psi)=\bigcup^{n-2}_{j=0} \cZ^j(\psi)
\end{equation}
where each $\cZ^j(\psi)$ is on a countable union of $j$-dimensional $C^1$ graphs for $0\leq j\leq n-3$ and~$\cZ^{n-2}$ is on a countable union of $(n-2)$-dimensional $C^{1,\alpha'}$ manifolds, for some~$0<\alpha'<1$.
\end{thm}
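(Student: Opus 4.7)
The plan is to combine the uniform vanishing-order bound from Proposition~\ref{prop:UniformBound} with a blowup analysis to associate with every zero a homogeneous harmonic spinor on~$\R^n$, stratify~$\cZ(\psi)$ by the dimension of the translation-symmetry subspace of any such blowup, and finally upgrade the resulting filtration to the claimed countable union of $C^1$ (resp.\ $C^{1,\alpha'}$) graphs via a Federer-type dimension reduction together with a Whitney/Reifenberg extension.

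\emph{Blowup limits.} Fix $x_0\in\cZ(\psi)$ and work in normal coordinates centred at $x_0$. Set
\[
H(x_0,r)=r^{1-n}\int_{\p B_r(x_0)}|\psi|^2\ds,\qquad \psi_{x_0,r}(y):=\frac{\psi(\exp_{x_0}(ry))}{H(x_0,r)^{1/2}}.
\]
The almost monotonicity of $N$ (Theorem~\ref{thm:AlmostMonotonicity}) together with Proposition~\ref{prop:UniformBound} provide uniform doubling-type bounds for $H(x_0,\cdot)$, hence uniform $L^2$-bounds for $\psi_{x_0,r}$ on compact sets; Schauder estimates applied to the rescaled equation, which schematically reads $\pd\psi_{x_0,r}=r\cdot \mathrm{Err}(r,y,\psi_{x_0,r},\snabla\psi_{x_0,r})$ with the error controlled by either~\eqref{eq:f1} or~\eqref{eq:Vq} after invoking $|\psi(x)|\lesssim\dist(x,x_0)^{d(x_0)}$, upgrade this to uniform $C^{1,\alpha}$ bounds. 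Extracting $r_k\to 0$, a limit $\psi_{x_0,0}$ exists and solves $\pd\psi_{x_0,0}=0$ on $\R^n$; constancy of the frequency along the limit forces it to be homogeneous of degree $d(x_0):=N(x_0,0)$, hence a nonzero homogeneous harmonic polynomial spinor.

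\emph{Symmetry subspace and stratification.} To each blowup attach its translational symmetry subspace
\[
S(\psi_{x_0,0}):=\{v\in\R^n:\psi_{x_0,0}(\cdot+v)=\psi_{x_0,0}(\cdot)\}.
\]
A homogeneous harmonic spinor invariant under an $(n-1)$-dimensional subspace would depend on a single variable and, being nonzero, would have nodal set of dimension~$n-1$, contradicting Theorem~\ref{thm:DimEstimate}; thus $\dim S(\psi_{x_0,0})\le n-2$. Setting
\[
\cZ^j(\psi):=\{x\in\cZ(\psi):\dim S(\psi_0)\le j\text{ for every blowup limit }\psi_0\text{ at }x\}
\]
for $0\le j\le n-2$ gives the decomposition~\eqref{eq:Zdec}, and a standard Federer dimension-reduction argument (at each step replacing a ``bad'' point by a blowup with strictly larger symmetry) gives $\dim_{\mathcal{H}}\cZ^j(\psi)\le j$.

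\emph{Graph structure.} For $j\le n-3$ I would promote each $\cZ^j$ to a countable union of $j$-dimensional $C^1$ graphs along the lines of~\cite{Han1994singular}: a quantitative flatness estimate coming from almost monotonicity of~$N$ shows that at every $x\in\cZ^j\setminus\cZ^{j-1}$ and every small scale $r$, the set $\cZ(\psi)\cap B_r(x)$ lies in an $\eta(r)$-tube around a $j$-dimensional affine subspace, with $\eta(r)\to 0$; a Whitney-type extension then glues the approximating planes on a countable family of closed subsets with a common modulus. For the top stratum, at $x_0\in\cZ^{n-2}$ every blowup $\psi_{x_0,0}$ depends only on two transverse variables and has an isolated zero in that $\R^2$; applying the implicit function theorem to two scalar components of $\psi$ that form a non-degenerate pair in the transverse plane produces a $C^{1,\alpha'}$ graph representation of $\cZ(\psi)$ near $x_0$, with $\alpha'$ inherited from the $C^{1,\alpha}$ regularity of~$\psi$ and~$\snabla\psi$.

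\emph{Main obstacle.} The delicate part is passing from qualitative symmetry of blowups to the quantitative flatness and, for the top stratum, to the \emph{uniqueness} of the tangent spinor that the implicit function theorem requires. Almost monotonicity alone does not yield uniqueness: one typically needs a Weiss-type refined monotonicity or a Lojasiewicz-Simon inequality for $N$ at each admissible vanishing order, and the non-homogeneous, curved, and only sub-quadratic setting~\eqref{eq:Vf} forces a careful re-derivation. The scalar arguments of~\cite{Han1994singular,CaffarelliFriedman1985partial} give the template, but each step has to be verified for the spinorial system, relying on the Euclidean Green function expansion used in the proof of Theorem~\ref{thm:DimEstimate}.
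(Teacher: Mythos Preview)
Your overall architecture---blowup, symmetry subspace, stratification in the spirit of \cite{Han1994singular}---matches the paper, but you miss a key simplification and your top-stratum step has a genuine gap.

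\textbf{Uniqueness of the tangent spinor is free.} You flag this as the main obstacle and reach for Weiss/Lojasiewicz machinery. In fact the paper obtains it at no cost from the Green-function expansion already established for Theorem~\ref{thm:DimEstimate}: Proposition~\ref{prop:decomp-k} gives an explicit decomposition $\varphi=P_k+Q_k$ with $P_k$ a uniquely determined homogeneous $\D_{g_0}$-harmonic polynomial and $|Q_k(x)|\le C|x|^{k+\delta}$. The (normalized) $P_k$ \emph{is} the blowup, so uniqueness and the quantitative error rate $O(|x|^{k+\delta})$ are built in; no refined monotonicity is needed. This also directly gives continuity of $y\mapsto P_k^y$ along $\cZ_k$ (the paper's Lemma~\ref{lem:Pconv}), which drives the cone/angle argument yielding the $C^1$ graph structure. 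The Federer dimension-reduction and Reifenberg machinery you invoke are unnecessary here.

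\textbf{The implicit function argument for the top stratum fails.} For $x_0\in\cZ_k^{n-2}$ with $k\ge 2$ one has $\psi(x_0)=0$ \emph{and} $\snabla\psi(x_0)=0$ (that is the definition of $\cZ_{\ge2}$), so no pair of scalar components of $\psi$ is nondegenerate in the transverse plane and the implicit function theorem does not apply; since $\psi$ is only $C^{1,\alpha}$ you cannot pass to higher derivatives either. The paper instead exploits the quantitative remainder bound from the decomposition: writing $x=(x_1,x_2)\in\R^2\times\cZ_k(\Psi_{x_0})$, from $\nabla P_k(x)=-\nabla(\varphi-P_k)(x)$ at a nearby zero one reads off $|x_1|^{k-1}\le C|x|^{k-1+\delta}$, hence $|x_1|\le C|x|^{1+\delta/(k-1)}$. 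This upgrades the $C^1$ graph obtained for the lower strata to $C^{1,\alpha'}$ with $\alpha'\le\delta/(k-1)$, and the uniform vanishing-order bound from Proposition~\ref{prop:UniformBound} lets one choose $\alpha'$ uniformly. (The IFT argument you sketch is exactly what the paper uses for $\cZ_1$, Lemma~\ref{lem:Z1estimate}, where $\snabla\psi\neq0$.)
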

Recall that a subset of~$\R^n$ is called countably~$k$-$C^l$-rectifiable if it is contained in a countable union of~$k$-dimensional submanifolds of class~$C^l$ (we follow the definition in~\cite{Baer1999zero}). 
\smallskip

We conclude this Introduction, observing that there are still a lot of open questions concerning the nodal sets of spinors. 
In lower dimension, say dimension two, three and four, the zero loci of spinor solutions are geometrically significant and have rather nice structural properties, see e.g.~\cite{HaydysWalpuski2015compactness,Taubes2014zero}. 
However, it is unclear how to provide a similar description in higher dimensions. 
Moreover, as remarked before, the local finiteness of~$(n-2)$-dimensional Hausdorff measure would be expected, as in the smooth case~\cite{Baer1999zero}, thus calling for an estimate of the Hausdorff density in the general case.
Those questions will be handled in a future work. 
\
\subsection{Organization of the paper}
In Section \ref{sec:prel} we first recall the well-known Bourguinon-Gauduchon trivialization of the spinor bundle, which is later used in local arguments involving spinor solutions. Then we establish some preliminary estimates, involving a Hardy-type inequality and Pohozaev formula, needed for the frequency estimates. 
Then, in Section \ref{sec:localexpansion} we locally decompose a solution into a vectorial harmonic polynomial plus a higher order term, and establish the dimension estimate for the nodal set in Section~\ref{sec:dim estimate}. 

In the second part, we first analyze the frequency function in Section \ref{sec:freq} and then use it to have an uniform control on the vanishing order of a solution. Combining those properties with the local decomposition mentioned above, we can prove the desired stratification result in Section \ref{sec:stratification}.

\smallskip

\subsection*{Acknowledgements.}
The authors are grateful to Q. Han for providing useful references and to A. Malchiodi for many insightful conversations during their time in Pisa. 
\smallskip

W. B. has been supported by MUR grant \emph{Dipartimento di Eccellenza 2023-2027} of Dipartimento di Matematica at Politecnico di Milano, by the GNAMPA 2023 Project \emph{Analisi spettrale per equazioni di Dirac e applicazioni}, and by the PRIN 2022 Project \emph{Nonlinear effective equations in presence of singularities}. R. W. was partially supported by SISSA, Trieste, Italy where part of the project was carried out. 

\subsection*{Data availability statement}
 Data sharing not applicable to this article as no datasets were generated or analysed during the current study.

\section{Preliminaries}\label{sec:prel}

In this section we collect some basic material, together with some techinical lemmas. 
We refer to~\cite{LawsonMichelsohn1989spin} for a more complete account on spin geometry, and to \cite{GilbargTrudinger1983elliptic, Ammann2003Habil} for the elliptic regularity theory for scalar second order elliptic equations and Dirac equations, respectively. 

We will briefly recall the Bourguinon-Gauduchon construction of a suitable local trivialization for spinor bundles, which is particularly useful for our purposes, and the Green function for Dirac operators.
Afterwards we introduce a Hardy type inequality and Pohozaev type identity for spinors, for which we also provide a proof. 

\subsection{The Bourguinon-Gauduchon trivialization}
In~\cite{BG1992spineurs}, Bourguinon and Gauduchon analyzed the variation of the Dirac operator with respect to the Riemannian metrics. 
For that purpose a suitable trivialization for different metrics has to be chosen, which is quite convenient in application and now bears their names, here abbreviated as~\emph{BG-trivialization}.
We briefly recall the construction below, referring to \cite{BG1992spineurs, Maier1997generic, AmmannHumbert2005positive, AHM2006mass} for more details.

\

Fix~$x_0\in M$ and take a contractible neighborhood~$U$ of~$x_0$ on which the spinor bundle is trivial~$\Sigma_g M|_U \cong U\times \C^N$.
Let~$(x^1,\cdots, x^n)$ be the local normal coordinates with origin at~$x_0 = 0$. 
Then the coefficients of the Riemannian metric~$g= g_{ij}\dd x^i \dd x^j$ has the following local expansion 
\begin{align}
 g_{ij}(x)
 = \delta_{ij} +\frac{1}{3}R_{iklj}x^k x^l 
   +\frac{1}{6} R_{iklj,q} x^kx^j x^q + \cO(r^4) 
\end{align}
where~$r=|x|$ denotes the geodesic distance from the origin~$x_0=0$, see~\cite{LeeParker1987Yamabe}. 
Let~$g_0$ denote the Euclidean metric on~$U$ which is identified with a neighborhood of~$0\in\R^n$ via the coordinate chart, i.e.~$g_{0,ij}=\delta_{ij}$ in~$U$. 
Then for each~$x\in U$, there exists a unique positive definite symmetric matrix~$b(x)= (b_i^j(x))_{n\times n}$
such that 
\begin{align}
 b_k^i(x) g_{ij}(x) b^j_l(x) = \delta_{kl}. 
\end{align}
In matrix form, $b(x)$ is just the positive definite square root of~$(g_{ij})^{-1}$. 
Note that~$b(x)$ depends smoothly on~$x$ , and it has the local expansion
\begin{align}
 b_i^j(x)
 = \delta_i^j - \frac{1}{6} R_{iklj} x^k x^l -\frac{1}{12} R_{iklj,q} x^k x^l x^q + \cO(r^4), 
\end{align}
see e.g.~\cite{AGHM2008spinorial}. 

The coordinate tangent frame~$\braces{\p_i}_{1\le i\le n}$, with $\p_i\equiv \frac{\p}{\p x^i}$ for~$1\le i\le n$, is oriented but not orthonormal w.r.t.~$g$, though it is ortonormal w.r.t. the Euclidean metric~$g_0$.  
The matrix~$b(x)$ transforms it into a local oriented~$g$-orthonormal frame:
\begin{align}
 e_i(x) = b_i^j(x) \frac{\p}{\p x^j}\big|_x, \qquad 1\le i\le n.
\end{align}
Thus we have an isometry of the tangent bundles 
\begin{align}
 b\colon (TU, g_0) \to (TU, g). 
\end{align}
In other words,~$b$ is a local isomorphism of the principal~$\SO(n)$ bundles 
\begin{align}
 b\colon P_{\SO}(U, g_0) \to P_{\SO}(U,g). 
\end{align}
It lifts to an isomorphism of the corresponding principal~$\Spin(n)$ bundle
\begin{align}
 \tilde{b}\colon P_{\Spin}(U, g_0) \to P_{\Spin} (U,g).
\end{align}
and hence also induces an isometry of the associated spinor bundles
\begin{align}
 \beta\colon \Sigma_{g_0}U \to \Sigma_g U. 
\end{align}
Thus any spinor~$\psi \in \Gamma(\Sigma_g M)$ can locally (in~$U$) be expressed by an Euclidean spinor~$\varphi\in \Gamma(\Sigma_{g_0} U)$, which is nothing but a vector valued function 
\begin{align}
 \varphi\colon U\to \C^N, 
\end{align}
so that~$\psi= \beta(\varphi)$. 

We remark that, since the local spin geometry depends on the metric in an intrinsic way, the spin connections of the two spinor bundles are different, hence~$\beta$ is not an isomorphism of Dirac bundles; in particular,~$\beta$ does not preserve the Dirac operators.
Denoting by~$\gamma_{g_0}$ and~$\gamma_g$ the corresponding Clifford multiplications, we can choose the Clifford maps in such a way that 
\begin{align}
 \gamma_g(X)\beta(\varphi)=\beta(\gamma_{g_0}(X)\varphi), \qquad \forall X\in\Gamma(TU),\quad \forall \varphi\in \Gamma(\Sigma_{g_0}U). 
\end{align}
Then it was shown in~\cite{AGHM2008spinorial} that, for any~$\varphi \in \Gamma_{g_0} U$, 
\begin{align}
 \D_g \beta(\varphi)
 = \beta(\D_{g_0}\varphi)
   +\sum_{i,j} (b_i^j -\delta_i^j) \beta\parenthesis{\gamma_{g_0}(\p_i)\snabla^0_{\p_j}\varphi}
   +\frac{1}{4}\sum_{i,j,k}  \Gamma^k_{ij} \gamma_g(e_i)\gamma_g(e_j)\gamma_g (e_k)\beta(\varphi).  
\end{align}
Here~$\snabla^0$ is an abbreviation for the Euclidean connection~$\snabla^{g_0}$ on~$\Sigma_{g_0} U$ (and it is just~$\nabla=\dd$ componentwise since the Euclidean metric has zero connection forms), and the $\Gamma_{ij}^k$ are the Christoffel symbols for~$g$ in the orthonormal frame~$(e_1,\cdots, e_n)$:
\[
\nabla_{e_i} e_j = \Gamma_{ij}^k e_k\,.
\]
Note that for~$r=|x|$ small, 
\begin{align}
 \Gamma_{ij}^k(x)= \cO(r).
\end{align}
Returning to the solution~$\psi$ of~\eqref{eq:Dirac eq-V}, writing~$\psi=\beta(\varphi)$ and noting that\footnotetext{Here the subscript of the norms are indicating that the inner products on the spinor bundles are induced by the corresponding Riemannian metrics on the tangent bundle; strictly speaking we should write~$|\psi|_{g_s} = |\varphi|_{g_{0s}}$ where~$g_s$ stands for the inner product structure on~$\Sigma_g U$, and~$g_{0s}$ for~$\Sigma_{g_0}U$ respectively.}~$|\psi|_g = |\varphi|_{g_0}$, we see that 
\begin{align}
 \D_g\psi 
 = & \beta(\D_{g_0}\varphi)
    +\sum_{i,j} (b_i^j -\delta_i^j) \beta\parenthesis{\gamma_{g_0}(\p_i)\snabla^0_{\p_j}\varphi}
   +\frac{1}{4}\sum_{i,j,k}  \Gamma^k_{ij}\beta\parenthesis{ \gamma_{g_0}(\p_i)\gamma_{g_0}(\p_j)\gamma_{g_0} (\p_k)\varphi} \\
 = & \beta^{-1}\parenthesis{V(\beta(\varphi))},
\end{align}
that is,
\begin{align}\label{eq:EuclideanDirac-V}
 \D_{g_0}\varphi
 +\sum_{i,j} (b_i^j -\delta_i^j) \gamma_{g_0}(\p_i)\snabla^0_{\p_j}\varphi
   +\frac{1}{4}\sum_{i,j,k}  \Gamma^k_{ij} \gamma_{g_0}(\p_i)\gamma_{g_0}(\p_j)\gamma_{g_0} (\p_k)\varphi
  = \widetilde{V}(\varphi)
\end{align}
where 
\begin{align}
 \widetilde{V}(\varphi)\coloneqq \beta^{-1}\circ V\circ \beta(\varphi) .
\end{align}
Since~$\beta$ is isometric, the new map $\widetilde{V}$ is of the same type as~$V$: this is clear for~$V$ of type (Vf), while for~$V$ of type (Vq), note that for any~$X\in\Gamma(TM)$,
\begin{align}
 \snabla^{g_0}_X \parenthesis{\beta^{-1}\circ V\circ \beta(\varphi) }
 =\snabla^{g_0}_X (\beta(V(\psi)))
 = (\nabla_X\beta)V(\psi)+ \beta\parenthesis{ \snabla_X V(\psi)}
\end{align}
 so that 
 \begin{align}
  \left| \D_{g_0} \parenthesis{\beta^{-1}\circ V\circ \beta(\varphi) }\right|
  \le |\nabla \beta| |V(\psi)| + |\D V(\psi)|
 \end{align}
 which satisfies a growth condition of the form~\eqref{eq:Vq} locally near the zero section of~$\Sigma_g M$.

Moreover, the Christoffel symbols~$\Gamma^k_{ij}$ with respect to~$(g, (e_i))$ relates to the coordinates Christoffel symbols~$\mathring{\Gamma}^k_{ij}$ defined by~$\nabla_{\p_i}\p_j = { \mathring{\Gamma}^k_{ij}}\p_k$ via (see~\cite{AGHM2008spinorial})
\begin{align}
 \Gamma^k_{ij}=\parenthesis{ b_i^p (\p_p b_j^l) + b_i^p b_j^q \mathring{\Gamma}^l_{pq}} (b^{-1})^k_l.
\end{align}

Note that the nodal sets of~$\psi$ and of~$\varphi$ are the same, we can thus exchangeably consider the solution of the equation~\eqref{eq:EuclideanDirac-V} in the sequel. 
However, since the second term in the equation also involves first order derivatives, which still cause some troubles for local computations, so we will also use the formulation in terms of the original Riemannian metric~$g$ whenever convenient.

\subsection{Fundamental solution of Euclidean Dirac operator}

Recall that the fundamental solutions~$\G(x,y)$ of the Dirac operator~$\D_g$ is a section of the bundle~$\pi_1^*\Sigma_g M \otimes (\pi_2^*\Sigma_g M)^*\to M\times M \setminus \diag(M\times M) $ which is singular along the diagonal
\begin{align}
 \diag(M\times M) \coloneqq \braces{(x,x)\in M\times M\mid x\in M}
\end{align}
where~$\pi_1, \pi_2$ denote the canonical projections to the first and second factors, respectively. 
It satisfies 
\begin{align}
 \D_{g,x} \G(x,y) = \delta_y(x) \id_{\Sigma_y M}
\end{align}
in the sense of distributions: for any~$\psi_1, \psi_2\in \Gamma(\Sigma_g M)$, 
\begin{align}\label{eq:Fundamental-Dirac}
 \int_M \Abracket{ \G(x,y)\psi_1(y), \D_g^*\psi_2 (x)}\dv_g (x) 
 = \Abracket{\psi_1(y),\psi_2(y)}, \qquad \forall y\in M. 
\end{align}

In the Euclidean domain~$(U, g_0)\subset \R^n$, the fundamental solution of~$\D_{g_0}$ is given by 
\begin{align}
 \G_0(x,y)=\D_{g_0,x}G_0(x,y)= -\frac{1}{n\omega_n}\frac{\gamma(x-y)}{|x-y|^n}
\end{align}
where~$G_0(x,y)$ denotes the fundamental solution of~$-\Delta_{g_0}$ satisfying~$-\Delta_{g_0} G_0(x,y)= \delta_y(x)$; for~$n\ge 3$ 
\begin{align}
 G_0(x,y)= \frac{1}{(n-2)n\omega_n}\frac{1}{|x-y|^{n-2}}\; ,
\end{align}
while for~$n=2$ 
\begin{align}
 G_0(x,y)=\frac{1}{2\pi}\log\frac{1}{|x-y|}\; . 
\end{align}

This is due to the fact 
\begin{align}
 \D_{g_0}^2 = -\Delta_{g_0} \id_{N\times N}=\diag(-\Delta_{g_0},\cdots, -\Delta_{g_0}).
\end{align}
It is readily seen that~$\D_{g_0,x} \G_0(x,y)=\delta_y(x)\id_{\Sigma_{g_0, y}}U$.
Then for Euclidean spinors~$\varphi_1,\varphi_2$ defined in~$U$, integration by parts in~\eqref{eq:Fundamental-Dirac} gives
\begin{align}
 \int_{U} & \Abracket{ \G_0(x,y)\varphi_1(y), \D_{g_0,x}^*\varphi_2 (x)}\dd x \\
  &= \int_{U} \Abracket{ \D_{g_0, x}\G_0(x,y)\varphi_1(y), \varphi_2(x)}\dd x 
      +\int_{\p U} \Abracket{\G_0(x,y)\varphi_1(y), \gamma(\nu)\varphi_2(x)}\ds_x \\
  &=\Abracket{\varphi_1(y), \varphi_2(y)}_{\Sigma_{g_0, y} U}
    +\int_{\p U} \Abracket{\varphi_1(y), \G_0(y,x)\gamma(\nu)\varphi_2(x)} \ds_x
\end{align}
where in the last step we used the skew-symmetry of Clifford multiplication of tangent vectors, and that~$G_{0}(x,y)= - G_0(y,x)$. 
That~$G(x,y)$ is anti-symmetric and self-adjoint actually holds ture in general case, see~\cite{AHM2006mass}. 
In the above computation, only~$\varphi_1(y)$ is needed, and~$\varphi_1(y)$ can be arbitrary in~$\Sigma_{g_0, y}U$.
Moreover,~$\D_{g_0}^*=\D_{g_0}$. 
Thus we obtain the spinorial Newton represenation formula 
\begin{align}\label{eq:spinNewton}
 \varphi(y)= \int_{U} \G_0(y,x)\D_{g_0,x}\varphi(x)\dd x
          -\int_{\p U} \G_0(y,x)\gamma_{g_0}(\nu(x))\varphi(x)\ds_x. 
\end{align} 
This will be the starting point for the proof of the local expansion provided in Section \ref{sec:localexpansion}.

\subsection{A Hardy type inequality}

 We need a general form of the Hardy inequality with boundary terms in the Riemannian case.
 For completeness we include a proof here. 
 
 Let~$B_r\equiv B_r(0)\subset U$ be a geodesic ball centered at~$x_0$ in~$(M,g)$. 
 The Riemannian metric in normal coordinates~$(\rho, \theta^1,\cdots, \theta^{n-1})$ takes the form 
 \begin{align}
  g = \dd \rho^2 + \rho^2 w_{ij}(\rho,\theta)\dd\theta^i\dd\theta^j. 
 \end{align}
 The corresponding volume form is 
 \begin{align}
  \dv_g 
  =& \sqrt{\det(g)}\dd\rho\dd\theta^1\cdots \dd\theta^{n-1}
  = \rho^{n-1}\sqrt{\det(w_{ij}(\rho,\theta))}\dd\rho\dd\theta
 \end{align}
 where we have abbreviated~$\dd\theta\equiv \dd\theta^1\cdots \dd\theta^{n-1}$.
 \begin{rmk}
  For spaceforms, we know that 
  \begin{align}
   w_{ij}(0,\theta)=\omega_{ij}, & & \frac{\p}{\p\rho} w_{ij}(\rho,\theta)|_{\rho=0} =0,
  \end{align}
  while the second derivatives of~$w_{ij}(\rho,\theta)$ would reflect curvature information. 
  Here~$\omega_{ij}$'s are the metric coefficients for the standard Euclidean spherical coordinates. 
 \end{rmk}
 
 For general~$g$, for~$\rho>0$ small, we may assume that 
 \begin{align}
  w_{ij}(\rho,\theta)= \omega_{ij}+ \cO(\rho^2), 
  & & 
  \frac{\p}{\p\rho} w_{ij}(\rho,\theta) = \cO(\rho). 
 \end{align}
 In particular, 
 \begin{align}
  \det(w_{ij}(\rho,\theta))=1+ \cO(\rho^2), & &  
  \frac{\p}{\p\rho}\det(w_{ij}(\rho,\theta))
  = \cO(\rho).
 \end{align}
 In the sequel we will encounter the term 
 \begin{align}
 W(\rho,\theta)
 \coloneqq &\frac{1}{\sqrt{\det(w_{ij}(\rho,\theta))}}\frac{\p}{\p \rho}\sqrt{\det(w_{ij}(\rho,\theta))}
 =\frac{1}{2}\frac{1}{\det(w_{ij}(\rho,\theta))}\frac{\p}{\p \rho}\det(w_{ij}(\rho,\theta)) \\
 =&\frac{1}{2}\frac{\p}{\p \rho}\ln\det(w_{ij}(\rho,\theta))
\end{align}
which can be estimated by 
\begin{align}
\left|W(\rho,\theta) \right|
 \le \CCoord \rho, \qquad \mbox{ for } 0<\rho < \rCoord
\end{align}
for some~$\rCoord>0$ small and~$\CCoord=\CCoord(n,g)>0$. 
Actually the constant~$\CCoord$ depends only on the dimension and local curvature, which can be chosen uniformly on compact regions of~$M$, or for regions with bounded geometry; in particular this is the case for smooth and compact~$M$. 
Though the following Hardy inequality for scalar functions on bounded domain is well-known, we present a proof to highlight the universality of the constants. 

 \begin{lemma}
  There exists~$\rHardy= \rHardy(n, g)>0$ such that for any~$r<\rHardy$ and any~$u\in W^{1,2}(B_r)$, there holds 
  \begin{align}\label{eq:Hardy-scalar}
   \int_{B_r} \frac{u^2}{\rho^2} \dv_g 
   \le  \frac{2}{n-1}\frac{1}{r}\int_{\p B_r} u^2\ds_r 
    + \frac{4}{(n-2)^2}\int_{B_r}|\nabla u|^2\dv_g . 
  \end{align}

 \end{lemma}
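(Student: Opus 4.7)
The plan is to integrate by parts in the radial direction using normal polar coordinates $(\rho,\theta^1,\dots,\theta^{n-1})$ around $x_0$, in which $\dv_g=\rho^{n-1}\sqrt{\det w(\rho,\theta)}\,d\rho\,d\theta$ and the outward unit normal to $\p B_r$ coincides with $\nabla\rho$. The strategy is standard for Hardy-type bounds on balls: produce a divergence identity for a well-chosen radial vector field, extract the boundary term, and absorb the cross and curvature terms using Young's inequality and the smallness of $r$.

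First, I would compute the divergence of the radial vector field $X=\rho^{-1}u^2\,\nabla\rho$. Using $|\nabla\rho|=1$ together with $\diverg_g(\nabla\rho)=(n-1)/\rho+W(\rho,\theta)$, which follows immediately from writing $\diverg_g(\nabla\rho)=(\sqrt{\det g})^{-1}\p_\rho\sqrt{\det g}$ in polar coordinates and from the very definition of $W$, one finds
\begin{equation*}
 \diverg_g\parenthesis{\rho^{-1}u^2\,\nabla\rho} = \frac{2u\,u_\rho}{\rho} + (n-2)\frac{u^2}{\rho^2} + \frac{u^2\,W}{\rho}.
\end{equation*}
Applying the divergence theorem and using $\nabla\rho\cdot\nu=1$ on $\p B_r$ then yields the core identity
\begin{equation*}
 (n-2)\int_{B_r}\frac{u^2}{\rho^2}\,\dv_g = \frac{1}{r}\int_{\p B_r}u^2\,\ds_r - 2\int_{B_r}\frac{u\,u_\rho}{\rho}\,\dv_g - \int_{B_r}\frac{u^2\,W}{\rho}\,\dv_g.
\end{equation*}

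Next I would use Young's inequality $-2ab\le \alpha a^2+\alpha^{-1}b^2$ with $a=u/\rho$ and $b=u_\rho$ on the cross term, transferring a controlled fraction of $\int_{B_r}u^2/\rho^2$ to the left-hand side while producing on the right a gradient contribution dominated by $\alpha^{-1}\int_{B_r}|\nabla u|^2\,\dv_g$, since $|u_\rho|\le|\nabla u|$. An optimal choice of $\alpha$ calibrates the constants appearing in front of the boundary integral and of the Dirichlet energy, producing the numerical values claimed in the statement.

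The Riemannian correction $\int_{B_r}u^2 W/\rho\,\dv_g$ must be absorbed on the left too, and this is where the restriction $r<\rHardy$ enters. Here the estimate $|W(\rho,\theta)|\le \CCoord\,\rho$, recalled right before the statement, is crucial: it gives $|u^2 W/\rho|\le \CCoord\,u^2 \le \CCoord\,r^2\,u^2/\rho^2$ throughout $B_r$, so that for $r<\rHardy$ with $\rHardy$ small enough in terms of $n$ and $\CCoord$ the curvature term is a small multiple of $\int_{B_r}u^2/\rho^2$ and can be absorbed on the left together with the Young error. Since $\CCoord$ depends only on dimension and local curvature, this fixes $\rHardy$ uniformly on compact manifolds (or on manifolds of bounded geometry). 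The main technical obstacle is precisely the joint bookkeeping of the Young parameter $\alpha$ and of the smallness threshold $\rHardy$: they must be chosen together so that the cross term and the curvature perturbation can be absorbed simultaneously, leaving exactly the constants displayed in the lemma. Everything else reduces to routine computation using the local expansions of $g_{ij}$ and of $\sqrt{\det w(\rho,\theta)}$ listed just above.
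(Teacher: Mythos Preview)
Your proposal is correct and follows essentially the same route as the paper: both derive the identity
\[
(n-2)\int_{B_r}\frac{u^2}{\rho^2}\,\dv_g = \frac{1}{r}\int_{\p B_r}u^2\,\ds_r - 2\int_{B_r}\frac{u\,u_\rho}{\rho}\,\dv_g - \int_{B_r}\frac{u^2\,W}{\rho}\,\dv_g
\]
(the paper by direct radial integration by parts, you via the divergence of $\rho^{-1}u^2\nabla\rho$, which is the same computation), then apply Young's inequality to the cross term and absorb the curvature term using $|W|\le \CCoord\rho$ and the smallness of $r$. The choice of parameters is also the same: the paper takes the Young parameter $\eps=(n-2)/4$ and $\rHardy$ so that $\CCoord\rHardy^2/(n-2)<1/4$, exactly the joint calibration you describe.
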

 \begin{proof}
  In normal spherical coordinates and for~$r<\rCoord$, we have 
  \[
  \begin{split}
   \int_{B_r} \frac{u^2}{\rho^2}\dv_g 
   =&\int_{\sph^{n-1}} \dd\theta \int_0^r \frac{u^2}{\rho^2} \rho^{n-1} \sqrt{\det(w_{ij}(\rho,\theta))}\dd\rho\\ 
   =&\frac{1}{n-2}\int_{\sph^{n-1}}\dd\theta\int_0^r u^2 \sqrt{\det(w_{ij})}\dd\rho^{n-2} \\
   =&\frac{1}{n-2}\int_{\sph^{n-1}} u^2 \sqrt{\det(w_{ij})}\rho^{n-2}\dd\theta\big|_{0}^r \\
    &\qquad -\frac{1}{n-2}\int_{\sph^{n-1}} \int_0^r 2u\frac{\p u}{\p \rho}\sqrt{\det(w_{ij})}\rho^{n-2}\dd\rho \dd\theta \\
     & \qquad -\frac{1}{n-2}\int_{\sph^{n-1}}\int_0^r u^2 \frac{\p}{\p \rho}\sqrt{\det(w_{ij})} \rho^{n-2}\dd\rho\dd\theta \\
   =&\frac{1}{n-2}\int_{\p B_r} \frac{u^2}{r} \ds_r 
    -\frac{1}{n-2}\int_{B_r} 2\frac{u}{\rho} u_\rho \dv_g\\
      &\qquad \qquad \qquad \qquad 
    -\frac{1}{n-2}\int_{B_r} \frac{u^2}{\rho}\frac{\p_\rho \sqrt{\det(w_{ij})}}{\sqrt{\det(w_{ij})}} \dv_g \\
   \le& \frac{1}{n-2}\frac{1}{r}\int_{\p B_r} u^2 \ds_r
       +\frac{\eps}{n-2}\int_{B_r}\frac{u^2}{\rho^2}\dv_g+\frac{1}{(n-2)\eps}\int_{B_r} |\nabla u|^2 \dv_g 
       \\
      & +\frac{\CCoord r^2}{n-2}\int_{B_r} \frac{u^2}{\rho^2} \dv_g. 
 \end{split}
 \]
 We choose~$\rHardy>0$ such that~$\frac{\CCoord \rHardy^2}{n-2}<\frac{1}{4}$, and take $\eps=\frac{n-2}{4}$. 
 Then for~$r< \rHardy <\sqrt{\frac{n-2}{4 \CCoord}}$, we have  
 \begin{align}
  \int_{B_r}\frac{u^2}{\rho^2}\dv_g
  \le \frac{2}{n-2}\frac{1}{r}\int_{\p B_r} u^2\ds_r 
    + \frac{4}{(n-2)^2}\int_{B_r}|\nabla u|^2\dv_g 
 \end{align}

 \end{proof}
Once~$\CCoord$ is uniformly fixed on~$M$ (or on a compact subset), the~$\rHardy$ can be also uniformly chosen. As a result, \emph{the coefficients in the Hardy inequality above are only dimensional constants}. 
This is quite relevant for later applications. 
For later convenience we write~$\CHardy=\frac{2}{n-2}$. 
We apply the above Hardy type inequality to~$u=|\psi|$ and use the Kato's inequality to obtain 

\begin{lemma}
 For~$\rHardy(n,g)>0$ as above,~$r<\rHardy$ and~$\psi\in W^{1,2}(B_{\rHardy})$, we have 
 \begin{align}\label{eq:Hardy-spinor}
 \int_{B_r} |\psi|^2\dv_g 
 \le r^2\int_{ B_r}\frac{|\psi|^2}{\rho^2}\dv_g 
 \le \CHardy r\int_{\p B_r} |\psi|^2\ds_r 
    + \CHardy^2 r^2 \int_{B_r} |\snabla\psi|^2\dv_g. 
\end{align}
\end{lemma}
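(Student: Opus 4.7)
The plan is to reduce this spinorial Hardy inequality to the scalar Hardy inequality \eqref{eq:Hardy-scalar} already established, applied to the modulus $u = |\psi|$, and then to dispose of the gradient term via Kato's inequality.

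First, for the leftmost inequality, I simply observe that on $B_r$ the radial variable satisfies $\rho \le r$, so $1 \le r^2/\rho^2$ pointwise; integrating $|\psi|^2$ against this estimate over $B_r$ yields
\begin{align}
 \int_{B_r} |\psi|^2 \dv_g \le r^2 \int_{B_r} \frac{|\psi|^2}{\rho^2} \dv_g.
\end{align}
This step is trivial but requires no regularity beyond $\psi \in L^2(B_r)$.

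For the second (substantive) inequality, I would take $u = |\psi|$. Since $\psi \in W^{1,2}(B_{\rHardy}, \Sigma_g M)$, the scalar function $|\psi|$ lies in $W^{1,2}(B_{\rHardy})$ by a standard truncation/regularization argument, and Kato's inequality for spin connections gives the pointwise bound $|\nabla |\psi|| \le |\snabla \psi|$ almost everywhere on the set $\{\psi \neq 0\}$ (and $\nabla|\psi| = 0$ a.e.\ on $\{\psi = 0\}$, so the bound extends to all of $B_r$). Applying the scalar inequality \eqref{eq:Hardy-scalar} to $u = |\psi|$ for $r < \rHardy$ yields
\begin{align}
 \int_{B_r} \frac{|\psi|^2}{\rho^2}\dv_g
 \le \CHardy \frac{1}{r} \int_{\p B_r} |\psi|^2 \ds_r + \CHardy^2 \int_{B_r} |\nabla |\psi||^2 \dv_g,
\end{align}
with the notation $\CHardy = 2/(n-2)$ fixed earlier. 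Multiplying through by $r^2$ and using Kato's inequality to replace $|\nabla |\psi||^2$ by $|\snabla\psi|^2$ yields the claimed chain of inequalities.

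The only delicate point is the justification of Kato's inequality and the fact that $|\psi| \in W^{1,2}$, but both are standard for spinors in Sobolev spaces (one approximates $|\psi|$ by $\sqrt{|\psi|^2 + \eps^2}$, computes $\nabla \sqrt{|\psi|^2+\eps^2} = \mathrm{Re}\,\Abracket{\snabla \psi, \psi}/\sqrt{|\psi|^2+\eps^2}$, estimates its norm by $|\snabla\psi|$, and lets $\eps \to 0^+$). Beyond this, no further geometric input is needed, since the dimensional constants from the scalar Hardy inequality pass through intact, which is precisely the point the authors wish to emphasize for later applications.
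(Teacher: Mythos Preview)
Your proof is correct and follows exactly the approach the paper indicates: apply the scalar Hardy inequality \eqref{eq:Hardy-scalar} to $u=|\psi|$ and then invoke Kato's inequality $|\nabla|\psi||\le|\snabla\psi|$. You have in fact supplied more detail than the paper does (the $\sqrt{|\psi|^2+\eps^2}$ regularization to justify $|\psi|\in W^{1,2}$ and the pointwise Kato bound), but the argument is the same.
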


\subsection{A Pohozaev type formula for spinors}

Recall the Pohozaev identity for a scalar function~$u\in C^2(B_R)$ and~$r<R$
  \begin{align}
   -\int_{B_r} 2( \vec{x}\cdot\nabla u) \Delta u + (n-2)u\Delta u \dx 
   =\int_{\p B_r} r|\nabla u|^2-2r |\p_\nu u|^2 - (n-2)u\p_\nu u \ds .
  \end{align}
Here~$B_r=B_r(0)\subset\R^n $ is an Euclidean ball.

A similar formula holds for spinors on manifolds.
Indeed, let~$B_R$ denote a geodesic ball in normal coordinates, and write~$\vec{x}= x^j \frac{\p}{\p x^j}$ for the local position vector field.
Recall that for a general vector field~$X=X^j\frac{\p}{\p x^j}$, the divergence is given by
\begin{align}
 \diverg_g(X)= \frac{\p X^j}{\p x^j} + \mathring{\Gamma}^j_{jk}X^k 
\end{align}
with~$|\mathring{\Gamma}^i_{jk}(x)| = \cO(|x|)$.
In particular, for a spinor field~$\psi\in C^2(B_R)$ and~$r<R$, the vector field (here we identify a differential form part with a vector field via the Riemannian metric)
\begin{align}
 X = |\snabla\psi|^2 \vec{x}-2\Abracket{(\snabla_{\vec{x}}\psi),\snabla\psi} -(n-2)\Abracket{\psi,\snabla\psi}
\end{align}
has divergence (where we have dropped the Christoffel symbol terms in the computation, but this doesn't affect the conclusion since both the LHS and RHS are tensorial)
\begin{equation}
\begin{split}
    & \diverg_g\parenthesis{|\snabla\psi|^2 \vec{x}-2\Abracket{(\snabla_{\vec{x}}\psi),\snabla\psi} -(n-2)\Abracket{\psi,\snabla\psi}} \\
     =& e_i\parenthesis{|\snabla\psi|^2 x^i-2x^j\Abracket{\snabla_j\psi,\snabla_i\psi}-(n-2)\Abracket{\psi,\snabla_i\psi}} \\
     =& n|\snabla\psi|^2+2x^i\Abracket{\snabla_i\snabla_j\psi,\snabla_j\psi}  \\
      &-2\delta_i^j \Abracket{\snabla_j\psi,\snabla_i\psi}
       -2x^j\Abracket{\snabla_i\snabla_j\psi,\snabla_i\psi}
       -2x^j\Abracket{\snabla_j\psi,\snabla_i\snabla_i\psi} \\
      &-(n-2)\Abracket{\snabla_i\psi,\snabla_i\psi}
       -(n-2)\Abracket{\psi,\snabla_i\snabla_i\psi} \\
     =&2x^i\Abracket{\snabla_i\snabla_j\psi-\snabla_j\snabla_i\psi,\snabla_j\psi}
      +2x^j\Abracket{\snabla_j\psi, \snabla^*\snabla\psi }
      +(n-2)\Abracket{\psi,\snabla^*\snabla\psi} \\
     =&2\Abracket{R_{\vec{x},e_j}\psi,\snabla_j\psi}
       +\Abracket{2\snabla_{\vec{x}}\psi+(n-2)\psi, \snabla^*\snabla\psi}.
       \end{split}
    \end{equation}
By Stokes theorem we get
    \begin{multline}\label{eq:Pohozaev-spinor}
     \int_{\p B_r} r|\snabla\psi|^2-2r|\snabla_{\nu}\psi|^2-(n-2)\Abracket{\psi,\snabla_\nu\psi}\ds_r \\
     =\int_{B_r} 2\Abracket{R_{\vec{x},e_j}\psi,\snabla_j\psi}
       +\Abracket{2\snabla_{\vec{x}}\psi+(n-2)\psi, \snabla^*\snabla\psi}\dv_g.
    \end{multline}
Note that by assumption we only have~$\psi\in C^{1,\alpha}$ so in general~$\snabla^*\snabla\psi$ is not defined. 
But for solutions of~\eqref{eq:Dirac eq-V} with~$V$ being of the types under consideration, we have 
    \begin{align}
     \D^2 \psi = \D V(\psi)=\gamma(e_j)\snabla_{e_j} V(\psi) 
    \end{align}
where the RHS is pointwisely defined, so by Lichnerowicz formula 
    \begin{align}
     \snabla^*\snabla+\frac{\Scal}{4}=\D^2
    \end{align}
we can define~$\snabla^*\snabla \psi$ as well, and
\begin{align}
 |\snabla^*\snabla\psi| \le \frac{|\Scal|}{4}|\psi|+ |\D V(\psi)|.
\end{align}
Hence the above Pohozaev formula~\eqref{eq:Pohozaev-spinor} is also well-defined.

\section{Local expansion around a zero}\label{sec:localexpansion}

In this section we prove that a spinor~$\psi$ solving \eqref{eq:EuclideanDirac-V}, vanishing at~$x_0$ admits a local expansion in~$U(x_0)$, with principal term being a vector consisting of harmonic polynomials. 
Similar results are well established for scalare equations, see e.g. ~\cite{CaffarelliFriedman1979free, CaffarelliFriedman1985partial, Han1994singular}. 
This fact in the spinorial case was essentially proved in~\cite{BMW2021ground}, and we briefly sketch it below. 

As before we denote a solution of~\eqref{eq:Dirac eq-V} on~$(M,g)$ by~$\psi$, and denote the corresponding local Euclidean spinor defined in~$(U, g_0)$ by~$\varphi$; they are related by~$\psi=\beta(\varphi)$ in~$U$. 
The spinor~$\varphi$ satisfies the equation~\eqref{eq:EuclideanDirac-V}:
\begin{align}
 \D_{g_0}\varphi+\sum_{i,j}(b_i^j-\delta_i^j)\gamma_{g_0}(\p_i)\snabla^0_{\p_j}\varphi
 +\frac{1}{4}\sum_{i,j,k}\Gamma^k_{ij}\gamma_{g_0}(\p_i)\gamma_{g_0}(\p_j)\gamma_{g_0}(\p_k)\varphi
 =\widetilde{V}(\varphi)
\end{align}
with~$\widetilde{V}\colon \Sigma_{g_0}U \to \Sigma_{g_0}U$ being Lipschitz and of the same type as~$V$.

Recall that the solution~$\psi$ to~\eqref{eq:Dirac eq-V} has regularity at least~$C^{1,\alpha}$, and so is~$\varphi$ since~$\beta$ is a bundle isometry. 
In particular, in the normal coordinates centered at~$0=x_0\in \mathcal{Z}(\psi)$, we have 
\begin{align}
 |\varphi(x)|\le C|x|^{1+\alpha}, & & 
 |\snabla^0\varphi(x)|\le C|x|^{\alpha}, \qquad \mbox{ for any }  \alpha \in (0,1). 
\end{align} 
Moreover, due to the equation~\eqref{eq:EuclideanDirac-V} and the fact that~$b_i^j-\delta_i^j= \cO(|x|^2)$, ~$\Gamma^k_{ij}(x)=\cO(|x|)$ and~$\widetilde{V}(\varphi)\le \CLip |\varphi(x)|$, we see that
\begin{align}
 |\D_{g_0}\varphi| \le C|x|^{1+\alpha},\qquad \mbox{ for any } \alpha\in (0,1).  
\end{align}

With the aid of the representation formula~\eqref{eq:spinNewton}, we can get the following decomposition. 
\begin{lemma}\label{lem:decomp-spinor}
 Suppsoe~$\varphi\colon U\to \R^{2N}$ is~$C^{1,\alpha}$,~$\varphi(0)=0$ and satisfies the local growth condition
 \begin{align}
  |\D\varphi(x)| \le C_\sigma |x|^{\sigma}
 \end{align}
 for some non-integer~$\sigma>0$.
 Then there exists~$R\in (0,1)$ such that 
 \begin{align}\label{eq:decomp}
  \varphi(x)= P(x)+ Q(x) \qquad \mbox{ in } B_R(0)\subseteq U
 \end{align}
 for some~$P,Q\colon B_R \to \R^{2N}$ where the nonzero components of~$P$ are harmonic polynomials of degree~$[\sigma]+1$, and
 \begin{align}\label{eq:Q-growth}
  |Q(x)|\le C'_\sigma |x|^{\sigma+1}, & & 
  |\snabla^0 Q(x)| \le C''_\sigma |x|^\sigma,
  \qquad \mbox{ in } B_R(0).
 \end{align}
 Moreover,~$P$ is a~$\D_{g_0}$-harmonic spinor, i.e.~$\D_{g_0}P=0$. 

\end{lemma}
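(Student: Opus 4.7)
The plan is to apply the Euclidean spinorial Newton representation~\eqref{eq:spinNewton} on a small ball~$B_R\subset U$ centered at the origin, splitting $\varphi=\widetilde{\varphi}+h$ with
\begin{align*}
 \widetilde{\varphi}(y)=\int_{B_R}\G_0(y,x)\,\D_{g_0}\varphi(x)\,\dd x,
 \qquad h(y)=-\int_{\p B_R}\G_0(y,x)\gamma_{g_0}(\nu(x))\varphi(x)\,\ds_x.
\end{align*}
Since $\D_{g_0,y}\G_0(y,x)=0$ for $y\ne x$, the boundary term $h$ is automatically $\D_{g_0}$-harmonic on $B_R$ and hence real-analytic there; the volume term $\widetilde{\varphi}$ absorbs the non-harmonicity and will contribute to the remainder $Q$.

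Next I would establish the pointwise bounds $|\widetilde{\varphi}(y)|\le C|y|^{\sigma+1}$ and $|\snabla^0\widetilde{\varphi}(y)|\le C|y|^{\sigma}$. This is a Riesz-type potential estimate based on $|\G_0(y,x)|\le C|y-x|^{1-n}$ together with the growth hypothesis $|\D_{g_0}\varphi(x)|\le C_\sigma|x|^\sigma$; the computation proceeds by splitting $B_R$ into the near region $\{|x|\le 2|y|\}$, where the kernel is singular but the measure is small, and the far region $\{|x|>2|y|\}$, where $|y-x|\asymp|x|$ and an explicit integration in spherical coordinates yields the claimed rate. The non-integrality of $\sigma$ is exactly what rules out a logarithmic correction at the threshold exponent. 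The gradient bound follows either by integrating by parts to shift the derivative onto $\D_{g_0}\varphi$, or by applying the same region split to the differentiated kernel $|\p_y\G_0(y,x)|\lesssim|y-x|^{-n}$.

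I would then expand the analytic spinor $h$ in its homogeneous polynomial components $h=\sum_{k\ge 0}h_k$, with $h_k$ homogeneous of degree $k$ in $y$. Because $\D_{g_0}$ lowers polynomial degree by one, the identity $\D_{g_0}h=0$ forces $\D_{g_0}h_k=0$ for each $k$, and combined with $\D_{g_0}^2=-\Delta\cdot\id$ componentwise this shows that each scalar entry of $h_k$ is a harmonic polynomial of degree $k$. Setting $P(y):=\sum_{k=0}^{[\sigma]+1}h_k(y)$ and $Q(y):=\varphi(y)-P(y)=\widetilde{\varphi}(y)+\bigl(h(y)-P(y)\bigr)$ produces the desired decomposition: the first summand of $Q$ is controlled by the Riesz bound above, while the second is a classical Taylor remainder of the analytic function $h$, bounded by $C|y|^{[\sigma]+2}$ on $B_{R/2}$. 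Because $\sigma$ is non-integer we have $[\sigma]+2>\sigma+1$, so both contributions are $\mathcal{O}(|y|^{\sigma+1})$ on a suitably small ball; the gradient bound follows analogously, and $\D_{g_0}P=0$ by construction term-by-term.

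The principal technical obstacle is the Riesz-potential estimate itself: one has to confirm that the singular Dirac Green kernel, integrated against an input of growth $|x|^\sigma$, really produces the sharp rate $|y|^{\sigma+1}$ with a constant depending only on $n$ and $\sigma$. The gradient version is the more delicate half, since $|\p_y\G_0|\sim|y-x|^{-n}$ is only borderline integrable near $y=x$; one must either integrate by parts (using that $\D_{g_0}\varphi$ is Hölder continuous and vanishes at the origin by elliptic regularity) or exploit a principal-value-type cancellation in the singular piece. Once these kernel estimates are in place, extracting the harmonic-polynomial part $P$ from the Taylor expansion of the manifestly $\D_{g_0}$-harmonic boundary integral $h$ is essentially formal.
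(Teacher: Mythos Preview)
Your decomposition $\varphi=\widetilde\varphi+h$ is natural, but the central Riesz-type estimate fails as stated. In the far region $\{|x|>2|y|\}$ one indeed has $|y-x|\asymp|x|$, so
\[
\int_{2|y|<|x|<R}|y-x|^{1-n}\,|x|^{\sigma}\,\dd x
\;\asymp\;\int_{2|y|}^{R} r^{1-n}\,r^{\sigma}\,r^{n-1}\,\dd r
\;=\;\int_{2|y|}^{R} r^{\sigma}\,\dd r
\;\asymp\; R^{\sigma+1},
\]
which is $O(1)$, not $O(|y|^{\sigma+1})$. Consequently $\widetilde\varphi(0)=\int_{B_R}\G_0(0,x)\D_{g_0}\varphi(x)\,\dd x$ is generically a nonzero constant (cancelled only by $h(0)$ in the full representation), and your remainder $Q=\widetilde\varphi+\sum_{k\ge[\sigma]+2}h_k$ satisfies $Q(0)=\widetilde\varphi(0)\neq 0$, contradicting $|Q(y)|\le C|y|^{\sigma+1}$. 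The same problem recurs at every order up to $[\sigma]+1$: the volume term carries its own ``Taylor jet'' at the origin, and that jet must be absorbed into $P$, not into $Q$.

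The paper avoids this by Taylor-expanding the \emph{kernel} $\G_0(x,y)$ in the evaluation variable $x$, writing $\G_0(x,y)=\sum_k\mathfrak G_k(x,y)$ with each $\mathfrak G_k$ homogeneous of degree $k$ in $x$ and $\D_{g_0}$-harmonic. The polynomial $P$ is then built from the low-degree kernel pieces $\sum_{k\le[\sigma]+1}\mathfrak G_k$ applied to \emph{both} the volume and the boundary integrals; this is exactly what subtracts the hidden jet of $\widetilde\varphi$. Your approach can be salvaged along these lines, but not by a direct Taylor expansion of $\widetilde\varphi$ itself, since $\widetilde\varphi$ is only $C^{1,\alpha}$ and has no classical $([\sigma]+1)$-jet to extract when $\sigma>\alpha$.
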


Note that initially we may start with~$\sigma=\alpha\in (0,1)$ and get a decomposition with $\deg(P)=1$.
But if~$\psi$ or equivalently~$\varphi$ vanishes at~$0=x_0$ to a higher order, a notion which will be clear later, we may get a larger~$\sigma$ and hence also a~$P$ of higher degree. 

\begin{proof}
 The proof is essentially given in~\cite{BMW2021ground}, thus here we only sketch it. 
 
 By~\eqref{eq:spinNewton}, we have
 \begin{align}
  \varphi(x)=\int_{B_R} \G_0(x,y)\D_{g_0} \varphi(y)\dd y 
   - \int_{\p B_R} \G_0(x,y)\gamma_{g_0}(\nu(y))\varphi(y)\ds_y.
 \end{align}
 The Green operator~$\G_0$ admits a local expansion in terms of homogeneous~$\D_{g_0}$-harmonic polynomials, i.e., Taylor expanding
 \begin{align}
  \G_0(x,y)
  =\sum_{k=0}^\infty \sum_{|\beta|=k}\p_x \G_0(-y)\frac{x^\beta}{\beta!}.
 \end{align}
 For each~$k\ge 0$, the~$k$-th summand above, 
 \begin{align}
  \sum_{|\beta|=k}\p_x \G_0(-y)\frac{x^\beta}{\beta!}\equiv \mathfrak{G}_k(x,y)
 \end{align}
 is~$\D_{g_0}$-harmonic in~$x$ and consists of degree~$k$ homogeneous polynomials. 
 
 Then we have 
 \begin{align}
  P(x)
  \coloneqq
  &\int_{B_R}\sum_{k=0}^{[\sigma]+1}\mathfrak{G}_k(x,y) \D_{g_0}\varphi(y)\dd y 
   +\int_{B_R \cap B_{(1+\frac{1}{\sigma})|x|}  }  \sum_{k\ge [\sigma]+2} \mathfrak{G}_k(x,y)\D \varphi(y)\dd y
   \\
   &+\int_{\p B_R} \sum_{k=0}^{[\sigma]+1}\mathfrak{G}_k(x,y)\gamma_{g_0}(\nu(y))\varphi(y)\ds_y,
 \end{align}
 which can be shown to be~$\D_{g_0}$-harmonic in~$x$, and whose components are homogeneous polynomials of degree at most~$[\sigma]+1$. 
 
 The other part
 \begin{align}
  Q(x)
  =& \int_{B_R \setminus B_{(1+\frac{1}{\sigma})|x|}} \sum_{k\ge [\sigma]+2} \mathfrak{G}_k(x,y)\D_{g_0}\varphi(y)\dd y  \\
  &+\int_{\p B_R} \sum_{k\ge [\sigma]+2} \mathfrak{G}_k (x,y)\gamma_{g_0}(\mu(y))\varphi(y)\ds_y
 \end{align}
is readily seen to satisfy~\eqref{eq:Q-growth}. 
For more details we refer to~\cite{BMW2021ground} and~\cite{CaffarelliFriedman1979free, CaffarelliFriedman1985partial}. 
\end{proof}

We remark that~$P(0)=0$, so the vector polynomial~$P$ has no degree zero part.
The lowest degree part in~$P$, is the leading term for the dimension estimates in next section. 
Therefore, we restate the result with $P$ homogeneous and~$\D_{g_0}$-harmonic.

\begin{prop}\label{prop:decomp-k}
 Let~$\varphi\in C^{1,\alpha}(U, \R^{2N})$ satisfies~$\varphi(0)=0$, ~$\varphi\not\equiv 0$ in~$U$, and 
 \begin{align}
  |\D_{g_0}\varphi|\le C|\psi|^{\bar{\sigma} }
 \end{align}
 for some~$C\ge 0$ and~$\bar{\sigma}\ge 1$.
 
 Then there exist~$k\ge 1$ and~$R\in (0,1)$ such that in the ball~$B_R(0)\subset U$ we have the decomposition 
 \begin{align}\label{eq:decomp-k}
  \varphi(x)= P_k(x)+ Q_k(x)
 \end{align}
 where~$P_k, Q_k\colon B_R \to \R^{2N}$ satisfy 
 \begin{itemize}
  \item $P_k\neq 0$, $\D_{g_0} P_k =0$, and the nonzero components of~$P_k$ are homogeneous harmonic polynomials of degree~$k$;
  \item $Q_k$ is continuous, $Q_k(0)=0$ and for any~$\delta \in (0,1)$ there exists a constant~$C(\delta)>0$ such that
        \begin{align}
         |Q_k(x)|\le C(\delta) |x|^{k+\delta}, & & 
         |\nabla Q_k(x)| \le C(\delta) |x|^{k+\delta-1}, \qquad \mbox{ in } B_R(0). 
         \end{align}

 \end{itemize}

\end{prop}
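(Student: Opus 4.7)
My plan is to apply Lemma~\ref{lem:decomp-spinor} iteratively, first to locate the vanishing order and a nonzero harmonic leading polynomial, and then a second time with a carefully tuned exponent to obtain the sharp $\delta$-dependent remainder. Starting from $\varphi\in C^{1,\alpha}$ with $\varphi(0)=0$, one has $|\varphi(x)|\le C|x|$, and therefore $|\D_{g_0}\varphi(x)|\le C|\varphi(x)|^{\bar\sigma}\le C|x|^{\bar\sigma}$. Apply Lemma~\ref{lem:decomp-spinor} with $\sigma_1$ a non-integer perturbation of $\bar\sigma$, obtaining $\varphi=P^{(1)}+Q^{(1)}$ with $P^{(1)}$ a $\D_{g_0}$-harmonic polynomial of degree $\le[\sigma_1]+1$ and $|Q^{(1)}(x)|=O(|x|^{\sigma_1+1})$. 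If $P^{(1)}\equiv 0$ then $|\varphi(x)|=O(|x|^{\sigma_1+1})$, which lifts the growth on $\D_{g_0}\varphi$ and permits a fresh application with $\sigma_2=(\sigma_1+1)\bar\sigma$ (again non-integer), and so on. The unique continuation property for $C^{1,\alpha}$ solutions of Dirac equations with Lipschitz right-hand side rules out infinite-order vanishing at a single point, so this iteration terminates in finitely many steps with a nonzero polynomial $P$.

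Next I would extract the homogeneous leading part. Since $\D_{g_0}$ lowers polynomial degree by one, the identity $\D_{g_0}P=0$ forces $\D_{g_0}P_d=0$ for each homogeneous component $P_d$ of $P$. Let $k\ge 1$ be the smallest index for which $P_k\not\equiv 0$ and set $Q_k:=\varphi-P_k$. Combining the remainder estimate at the terminating step with the fact that higher-order homogeneous components of $P$ are $O(|x|^{k+1})$ yields $|\varphi(x)|\le C|x|^k$ near $0$.

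Finally, with $|\varphi(x)|\le C|x|^k$ in hand, for any $\delta\in(0,1)$ the exponent $\sigma=k-1+\delta$ is non-integer and
\[
 |\D_{g_0}\varphi(x)|\le C|\varphi(x)|^{\bar\sigma}\le C|x|^{k\bar\sigma}\le C|x|^{k-1+\delta},
\]
using $\bar\sigma\ge 1$ and $\delta<1$. A second application of Lemma~\ref{lem:decomp-spinor} with this $\sigma$ yields $\varphi=P'_\delta+Q'_\delta$ with $\deg P'_\delta\le k$, $|Q'_\delta(x)|\le C(\delta)|x|^{k+\delta}$, and $|\snabla^0 Q'_\delta(x)|\le C(\delta)|x|^{k-1+\delta}$. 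The bound $|P'_\delta(x)|\le|\varphi(x)|+|Q'_\delta(x)|\le C|x|^k$, together with $P'_\delta(0)=0$ and $\deg P'_\delta\le k$, forces all components of $P'_\delta$ of degree strictly less than $k$ to vanish, so $P'_\delta$ is homogeneous of degree $k$. Since a nonzero homogeneous degree-$k$ polynomial cannot be $o(|x|^k)$, we identify $P'_\delta=P_k$, and hence $Q_k=Q'_\delta$ satisfies the desired bounds.

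The main obstacle is the termination of the first-stage iteration, which rests on the unique continuation principle for Dirac operators with Lipschitz nonlinearity. The rest is careful bookkeeping of polynomial degrees against remainder exponents, with the two-pass structure naturally separating the qualitative existence of the leading polynomial from the quantitative $\delta$-sharp estimate.
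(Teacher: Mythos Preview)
Your approach is correct and follows the same strategy as the paper: iterate Lemma~\ref{lem:decomp-spinor}, extract the lowest-degree nonzero homogeneous part $P_k$ of the resulting harmonic polynomial, and invoke strong unique continuation for the Dirac operator to guarantee that such a $k$ exists. The paper's proof is a one-sentence sketch that simply sets $Q_k=(P-P_k)+Q$; your explicit second pass with $\sigma=k-1+\delta$ is a clean way to secure the $\delta$-dependent remainder bound (and the identification $P'_\delta=P_k$) that the paper leaves implicit.
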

For the proof, given the regularity of $\varphi$, the previous results applies and then it suffices to collect the nonzero lowest degree part~$P_k$ of~$P$ in~\eqref{eq:decomp}, and put~$Q_k= (P- P_k ) + Q$. 
That~$P$ is nonzero (so that such a~$k$ exists) follows from the strong unique continuation principle for the standard Euclidean Dirac operator~$\D_{g_0}$.

\section{Dimension estimate of the nodal domain}\label{sec:dim estimate}
In this section we prove that the nodal set $\cZ(\psi)$ of a solution $\psi\in C^{1,\alpha}(\Sigma_g M)$ to \eqref{eq:Dirac eq-f} has Hausdorff dimension less than or equal to $n-2$, under the assumptions of Theorem \ref{thm:DimEstimate}. 
Note that under the bundle isomorphism~$\beta$ we have~$\cZ(\psi)=\cZ(\varphi)$ where~$\psi=\beta(\varphi)$, thus it suffices to consider the Euclidean spinor~$\varphi$.
To this aim we decompose the nodal set as follows
\begin{equation}\label{eq:Zsplit}
 \cZ(\psi)=\cZ_1(\psi)\cup\cZ_{\geq2}(\psi)\,,
\end{equation}
where $\cZ_1(\psi)$ is the set of points where $\psi$ vanishes to first order, that is, where the gradient~$\snabla\psi$ does not vanish,  while $\cZ_{\geq2}(\psi)$ denotes the set of points where both~$\psi$ and~$\snabla\psi$ vanish, namely the \emph{singular set} of~$\psi$,~$\mathcal{S}(\psi)$.

In terms of the decomposition~\eqref{eq:decomp-k},~$\cZ_1(\psi)=\cZ_1(\varphi)$ corresponds to those points where~$k=1$, while~$\cZ_{\ge 2}(\psi)=\cZ_{\ge 2}(\varphi)$ corresponds to those points where~$k\ge 2$.  

\medskip
     
In the sequel, for simplicity of notation, we assume that a solution $\psi=\beta(\varphi)\in C^{1,\alpha}$ is given and we drop the dependence on it in the notation, whenever there is no ambiuity. 
Thus we denote the nodal set by $\cZ$, and so on.
     
\medskip
     
We treat differently points in the nodal set, according to the splitting \eqref{eq:Zsplit}, and prove the desired dimension estimate for the set $\cZ_1$ first, and then dealing with $\cZ_{\geq 2}=\mathcal{S}$.

\subsection{Dimension estimates for \texorpdfstring{$\cZ_1$}{Z1} }

In contrast to scalar functions, whose regular level sets are smooth~$(n-1)$-dimensional hypersurfaces, we have the following result.
\begin{lemma}\label{lem:Z1estimate}
     The set $\cZ_1$ is $(n-2)$-$C^{1,\alpha}$-rectifiable. 
\end{lemma}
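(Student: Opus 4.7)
The plan is to work in the Bourguinon--Gauduchon trivialization around a first-order zero $x_0 \in \cZ_1$, passing to the Euclidean spinor $\varphi = \beta^{-1}(\psi)$. Since $\beta$ is a fiberwise isometry and, because $\varphi(x_0)=0$, the relation $\snabla^g_X \beta(\varphi)(x_0) = \beta(\snabla^0_X \varphi(x_0))$ holds, the condition that $\psi$ vanishes to first order at $x_0$ is equivalent to $\varphi(x_0)=0$ with $\snabla^0 \varphi(x_0) \neq 0$. Moreover $\cZ(\psi) = \cZ(\varphi)$, so we may work entirely with $\varphi$.

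First, I would apply Proposition~\ref{prop:decomp-k} on a ball $B_R(x_0)$ to get a decomposition $\varphi(x) = P_k(x) + Q_k(x)$ with $P_k$ a nonzero vector of homogeneous $\D_{g_0}$-harmonic polynomials of degree $k$ and $Q_k$ a higher-order remainder with $|\snabla^0 Q_k(x)| \le C(\delta)|x|^{k+\delta-1}$. The hypothesis $\snabla^0 \varphi(x_0) \neq 0$ forces $k=1$: otherwise both $\snabla^0 P_k(x_0) = 0$ and $\snabla^0 Q_k(x_0) = 0$. Thus $P_1 \colon \R^n \to \Sigma_{g_0, x_0}$ is a nonzero linear map satisfying $\D_{g_0} P_1 = 0$.

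Next I would show $\rank(P_1) \ge 2$. Writing $P_1(x) = \sum_j x^j A_j$, the Dirac-harmonicity reads $\sum_j \gamma_{g_0}(\p_j) A_j = 0$. If $\rank(P_1) = 1$ then $A_j = a_j v$ for some nonzero $a \in \R^n$ and $v \in \Sigma_{g_0,x_0}$, giving $0 = \gamma_{g_0}(a) v$. But $\gamma_{g_0}(a)^2 = -|a|^2 \id$ is invertible, forcing $v = 0$, a contradiction. With $\rank(P_1) \ge 2$ in hand, pick two components $\varphi_{i_1}, \varphi_{i_2}$ whose differentials at $x_0$, namely the $i_1$-th and $i_2$-th rows of $P_1$, are linearly independent. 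The $C^{1,\alpha}$-map $\Phi = (\varphi_{i_1}, \varphi_{i_2}) \colon U \to \R^2$ then has surjective differential at $x_0$, so by the implicit function theorem (in $C^{1,\alpha}$-form, e.g.\ via the Schauder version of the inverse function theorem) $\Phi^{-1}(0)$ is, in a neighborhood of $x_0$, an $(n-2)$-dimensional $C^{1,\alpha}$-submanifold, and it contains $\cZ \cap U$ a fortiori.

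Finally, since $M$ is second countable, the set $\cZ_1$ can be covered by countably many such neighborhoods, each contributing a single $(n-2)$-dimensional $C^{1,\alpha}$-submanifold, proving the claimed rectifiability. The only genuinely non-routine step is the rank-at-least-two assertion for $P_1$, and this is resolved purely by the algebraic invertibility of Clifford multiplication by a nonzero vector; the remainder of the argument is a standard implicit function theorem plus covering argument.
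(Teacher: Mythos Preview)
Your proof is correct and follows essentially the same approach as the paper's: both reduce to the Euclidean spinor $\varphi$ via the BG trivialization, use the decomposition $\varphi = P_1 + Q_1$ at a first-order zero, establish that the linear Dirac-harmonic part $P_1$ has rank at least two by the invertibility of Clifford multiplication by a nonzero vector, and then apply the $C^{1,\alpha}$ implicit function theorem to two suitable components. The only cosmetic difference is that you invoke $\D_{g_0} P_1 = 0$ directly from Proposition~\ref{prop:decomp-k}, whereas the paper computes $\D_{g_0}\varphi(0)$ explicitly and uses the equation $V(0)=0$; these are equivalent since $\nabla Q_1(0)=0$.
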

    \begin{proof}
Let $x_0\in\cZ_1$ and take a normal coordinate chart around~$x_0 =0$. 
It sufficies to show that $\cZ\cap B_r$ is contained in a~$C^{1,\alpha}$-rectifiable subset of dimension at most $n-2$ for some small~$r>0$. 

\smallskip

By assumptions, the decomposition \eqref{eq:decomp-k} holds around $x_0$, with $k=1$. Then we have $P_1=(P^1_1, \cdots, P^{2N}_1)$, each~$P^j_1$ being a homogenous polynomial of degree one, and the vector space~$\mathscr{P}=\Span_\R\{P^1_1, \cdots , P^{2N}_1\}$ is non-trivial. 
We claim that~$\mathscr{P}$ cannot be one-dimensional.

By contradiction, suppose that there exists a non-zero linear function $p(x^1, \cdots, x^n)$ and constants $c^1, \cdots, c^{2N}\in \R$ such that
\begin{equation}
 P^j_1=c^j p ,\quad 1\le j \le 2N
\end{equation}
where at least one coefficient $c^j$ is non-zero. 
By \eqref{eq:decomp}, we have $\nabla Q_1(0)=0$. 
Then at~$x_0=0$ we have
\begin{equation}
 \D_{g_0}\varphi(0)
 =\sum_{1\le\alpha\le n} \gamma_{g_0}(\p_\alpha)\nabla_{\p_\alpha}\varphi(0)
 =\sum_{1\le\alpha\le n} \gamma_{g_0}(\p_\alpha)\nabla_{\p_\alpha}P_1(0).
\end{equation}
Since~$p(x^1,\cdots, x^n)$ is linear, up to a linear transformation on~$B_1(0)\subset \R^n$, we may assume that
~$p(x^1,\cdots, x^n)= x^1$, and hence~$\nabla_{\p_\alpha}p=\delta_{1\alpha}$. 
Consequently, there holds
\begin{equation}
 \D_{g_0}\varphi(0)= \sum_{1\le\alpha\le n}\gamma_{g_0}(\p_\alpha)
 \begin{pmatrix}
  c^1 \\\vdots \\ c^{2N}
 \end{pmatrix}\delta_{1\alpha}
 =\gamma_{g_0}(\p_1)\begin{pmatrix}
  c^1 \\\vdots \\ c^{2N}
 \end{pmatrix}. 
\end{equation}
On the other hand, since~$V$ respects the zero section, $\D\psi(0)=V(0)=0$, hence also $\D_{g_0}\varphi(0)=0$. 
But since $\gamma_{g_0}(\p_1)$ in invertible, we get $c^1=\cdots=c^{2N}=0$, which is a contradiction. 

Therefore, the vector space~$\Span_\R\{P^1_1, \cdots , P^{2N}_1\}$ is at least two-dimensional.
We may suppose that $P^1_1,P^2_1$ are linearly independent, and then so are their gradients $\nabla P^1_1,\nabla P^2_1$. 
Note that
\[
\cZ\cap B_\rho=\{x\in B_\rho :\varphi(x)=0 \}\subseteq\{x\in B_\rho : \varphi^1(x)=0,\varphi^2(x)=0\}=:\Omega_\rho\,,
\]
and, again using \eqref{eq:decomp-k}, $\nabla\varphi^1(0)=\nabla{P}^1_1(0), \nabla\varphi^2(0)=\nabla{P}^2_1(0)$ are linearly independent.
Then by the implicit function theorem, noting that~$\psi\in C^{1,\alpha}$, there exists $\rho>0$ such that $\Omega_\rho$ is a~$C^{1,\alpha}$-submanifold of dimension $(n-2)$, concluding the proof.
\end{proof}


\subsection{Dimension estimates for \texorpdfstring{$\cZ_{\geq2}$}{Z>=2}}
For simplicity, let us assume the spinor $\varphi$ is defined on the unit ball $B_1$, as the argument is local.

Observe that, in terms of the components~$\varphi=(\varphi^1,\ldots,\varphi^{2N}) $, there holds
\begin{equation}\label{eq:inters}
\cZ_{\geq 2}(\varphi)=\bigcap^{2N}_{j=1}\cZ_{\geq 2}(\varphi^j)\,.
\end{equation}
That is, the singular set of the spinor is the intersection of the singular sets of its components. 
Moreover, in terms of~\eqref{eq:decomp-k} we have
\begin{equation}\label{eq:nodaltwo}
\begin{split}
\cZ_{\geq2}(\varphi^j)&=\{x_0\in B_1\,:\, \varphi^j(x_0)=0\,,\, \varphi^j(x)=P^j_k(x-x_0)+Q^j_k(x-x_0),\, \mbox{for some $k\geq2$}\} \\
&=\{x_0\in B_1\,:\, \varphi^j(x_0)=0\,,\,\nabla\varphi^j(x_0)=0\} \,.
\end{split}
\end{equation}
Thus we are led to prove the desired dimension estimate for each set $\cZ_{\geq 2}(\varphi^j)$.
This is achieved using an argument from \cite{CaffarelliFriedman1985partial}, as illustrated below.  
\smallskip

For fixed $k\geq 2$, denote by $\cZ_k(\varphi)$ the set of points for which the homogeneous polonomial part in~\eqref{eq:decomp-k} has degree~$k$.  
This essentially means that~$\varphi$ vanishes there to the order~$k$, a concept which will be made clear later via the frequency function. 
It is evident that
\begin{equation}\label{eq:Zorder}
\cZ_{\geq 2}=\bigcup_{k\geq 2}\cZ_k\,.
\end{equation}
Theorem \ref{thm:DimEstimate} is a consequence of the following result, thanks to \eqref{eq:inters} and \eqref{eq:Zorder}.

\begin{thm}\label{thm:Hmeas}
There holds
\begin{equation}\label{eq:Hmeas}
\cH^{n-2+\gamma}(\cZ_k(\varphi^j))=0\,,\qquad \forall \gamma>0\,,\, \forall j\in\{1,\ldots, 2N\}\,,
\end{equation}
where $\cH^\beta$ denotes the $\beta$-dimensional Hausdorff measure.
\end{thm}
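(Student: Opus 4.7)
The plan is to adapt the classical scalar argument of Caffarelli--Friedman \cite{CaffarelliFriedman1985partial} (see also Han \cite{Han1994singular}) component-wise, exploiting the harmonic-polynomial structure provided by Proposition \ref{prop:decomp-k}. Fix $j$ and $k \ge 2$; for every $x_0 \in \cZ_k(\varphi^j)$ one has a decomposition
\[
\varphi^j(x) = P^{x_0}(x-x_0) + Q^{x_0}(x-x_0) \quad \text{in } B_R(x_0),
\]
with $P^{x_0}$ a nonzero homogeneous harmonic polynomial of degree $k$ and $|Q^{x_0}(y)| + |y|\,|\nabla Q^{x_0}(y)| \le C|y|^{k+\delta}$ for some $\delta \in (0,1)$. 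Since $k \ge 2$, every $x_1 \in \cZ_k(\varphi^j) \cap B_{R/2}(x_0)$ satisfies $\varphi^j(x_1) = 0$ and $\nabla\varphi^j(x_1) = 0$, giving
\[
|P^{x_0}(x_1-x_0)| \le C|x_1-x_0|^{k+\delta}, \qquad |\nabla P^{x_0}(x_1-x_0)| \le C|x_1-x_0|^{k-1+\delta}.
\]
By homogeneity, the unit direction $y := (x_1-x_0)/|x_1-x_0|$ therefore lies within distance $O(|x_1-x_0|^\delta)$ of the singular cone $\mathcal{K}(P^{x_0}) := \{z \in \R^n : P^{x_0}(z) = 0,\ \nabla P^{x_0}(z) = 0\}$.

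The second ingredient is a purely algebraic lemma: for any nonzero harmonic polynomial $P$ of degree $k \ge 2$, the cone $\mathcal{K}(P)$ has Hausdorff dimension at most $n-2$. Indeed, if $\mathcal{K}(P)$ contained an irreducible algebraic hypersurface $\{f = 0\}$, then $f \mid P$ and, since $\nabla P$ also vanishes on $\{f = 0\}$, in fact $f^2 \mid P$; iterating using $\Delta P = 0$ and the formula $\Delta(f^m R) = m(m-1)|\nabla f|^2 f^{m-2} R + \cdots$ (valid on the smooth locus of $\{f=0\}$, where $|\nabla f| \ne 0$) would force $f^m \mid P$ for every $m$, contradicting $\deg P = k < \infty$. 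Hence $\mathcal{K}(P)$, being a real algebraic set containing no hypersurface, has dimension at most $n-2$.

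Combining the two, for small $r$ the set $\cZ_k(\varphi^j) \cap B_r(x_0)$ lies in a tubular neighborhood of $\mathcal{K}(P^{x_0}) \cap B_r(x_0)$ of thickness $O(r^{1+\delta})$, which can be covered by at most $O(r^{-\delta(n-2)})$ balls of radius $O(r^{1+\delta})$. Iterating this scale-reduction at each ball of the cover containing a point of $\cZ_k(\varphi^j)$, after $m$ steps the total $(n-2+\gamma)$-Hausdorff premeasure of the refined cover is bounded by $C^m r^{(n-2) + \gamma(1+\delta)^m}$, which tends to $0$ as $m \to \infty$ for any $\gamma > 0$ and any $r < 1$. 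A Vitali-type argument on a finite initial cover of $\cZ_k(\varphi^j)$ then yields $\cH^{n-2+\gamma}(\cZ_k(\varphi^j)) = 0$, which is the claim.

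The main technical difficulty is to run the covering uniformly in $x_0 \in \cZ_k(\varphi^j)$: the leading polynomial $P^{x_0}$ may degenerate along sequences in $\cZ_k(\varphi^j)$, and the constants in the error bound for $Q^{x_0}$ depend on $x_0$. Normalizing $\|P^{x_0}\|_{L^\infty(S^{n-1})} = 1$ and exploiting compactness in the finite-dimensional space of degree-$k$ harmonic polynomials, together with the $C^{1,\alpha}$-regularity of $\varphi^j$ on compact subsets, yields the required uniform constants; the quantitative form of the algebraic lemma, namely controlling uniformly the measure of $\epsilon$-tubular neighborhoods of $\mathcal{K}(P)$ as $P$ ranges over normalized harmonic polynomials of degree $k$, is the most delicate point and makes the iteration above rigorous.
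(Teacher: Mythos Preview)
Your argument follows the same Caffarelli--Friedman iterated-covering strategy as the paper, and the computation of the $(n-2+\gamma)$-premeasure after $m$ steps is correct. The paper, however, does not work with the full singular cone $\mathcal{K}(P^{x_0})$: it invokes \cite[Thm.~2.1]{CaffarelliFriedman1985partial} (stated here as Lemma~\ref{lem:cusplike}) as a black box, which already delivers an $(n-2)$-\emph{plane} $\pi_{x_0}$ through $x_0$ with $\cZ_k\cap B_\delta(x_0)$ contained in the cusp $\{C|x-x_0|^{1+\eps}>\dist(x,\pi_{x_0})\}$. Covering a plane by cubes (Lemma~\ref{lem:count}) is elementary, so the paper avoids both your algebraic divisibility lemma and the delicate ``quantitative form'' you mention at the end, namely a uniform Minkowski-content bound for $\mathcal{K}(P)\cap B_1$ as $P$ ranges over normalized degree-$k$ harmonic polynomials.

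One point in your last paragraph deserves care. Saying ``normalize $\|P^{x_0}\|=1$ and use compactness'' does not by itself yield uniform constants: normalizing $P^{x_0}$ rescales $Q^{x_0}$ by $\|P^{x_0}\|^{-1}$, so the error bound $|Q^{x_0}(y)|\le C|y|^{k+\delta}$ blows up precisely when $P^{x_0}$ degenerates. The paper handles this by first restricting to the set $\cZ^{\mu,\eta}_k(\varphi^j)=\{x\in\cZ_k(\varphi^j)\cap B_{1-\mu}:\|P^j_k\|_{L^2(B_1)}\ge\eta\}$, on which Lemma~\ref{lem:cusplike} gives constants depending only on $\mu,\eta$; one then runs the iteration on this set and lets $\eta\to0^+$, $\mu\to0^+$ at the end. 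Your outline is correct once this stratification is inserted, but without it the iteration does not close.
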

Observe that \cite[Thm. 2.1]{CaffarelliFriedman1985partial} applies to $\cZ_k(\varphi^j)$. 
It essentially proves that the set is locally contained in a cusp-like neighborhood of a $(n-2)$-plane. In our context, such result reads 
\begin{lemma}\label{lem:cusplike}
Take $0<r<1$. If $x_0\in \cZ_k(\varphi^j)\cap B_{1-r}$, there exists a ball $B_\delta(x_0)$ and an $(n-2)$-plane $\pi_{x_0}$ passing through $x_0$ such that
\[
\cZ_k\cap B_{\delta}(x_0)\cap B_{1-r/2}\subseteq K(x_0)\,,
\]
where $K(x_0)=\{x\,:\, C\vert x-x_0\vert^{1+\eps}> d(x,\pi_{x_0})\}$, for some $0<\eps<1$. Here $d(\cdot,\cdot)$ denotes the distance of a point from a set, and the constants $C,\delta>0$ only depend on $r$ and on a lower bound on $\Vert P_k\Vert_{L^2(B_1)}$.
\end{lemma}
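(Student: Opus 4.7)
The plan is to invoke the local decomposition of Proposition~\ref{prop:decomp-k} at $x_0$: after translating $x_0$ to the origin, write $\varphi^j(y) = P(y) + R(y)$, where $P \coloneqq P^j_k$ is a nontrivial homogeneous harmonic polynomial of degree $k \geq 2$ and $R \coloneqq Q^j_k$ satisfies $|R(y)| \le C|y|^{k+\delta}$ and $|\nabla R(y)| \le C|y|^{k-1+\delta}$ on a ball $B_\rho(0)$. The $(n-2)$-plane $\pi_{x_0}$ will be extracted from the singular set
\[
\Sigma_P \coloneqq \braces{ y \in \R^n : P(y) = 0,\ \nabla P(y) = 0 }
\]
of $P$, translated back through $x_0$.

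First, I would establish the structural fact that $\Sigma_P$ is an algebraic cone of dimension at most $n-2$. Indeed, if $\Sigma_P$ contained an open piece of a smooth hypersurface, then $P$ and all of its first derivatives would vanish on it, and the strong unique continuation for harmonic functions would force $P \equiv 0$, contradicting nontriviality. From this codimension-$2$ cone one selects an $(n-2)$-dimensional linear subspace $\pi$ which, after inflating by a cusp-shaped neighborhood, covers $\Sigma_P$ locally near the origin; this can be achieved by taking an affine tangent subspace at $0$, or by covering the compact set $\Sigma_P \cap \sph^{n-1}$ by finitely many $(n-3)$-dimensional great spheres and absorbing the finite count into the cusp constant $C$.

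The core quantitative input is a \L ojasiewicz-type inequality on the sphere: there exist $c > 0$ and $\mu \ge 1$, depending on $P$, such that
\[
|P(\omega)|^2 + |\nabla P(\omega)|^2 \ge c\, \dist(\omega, \Sigma_P \cap \sph^{n-1})^{2\mu}, \qquad \forall\, \omega \in \sph^{n-1}.
\]
For $x_1 \in \cZ_k(\varphi^j) \cap B_\delta(x_0)$ put $y_1 = x_1 - x_0$. Since $k \ge 2$ one has both $\varphi^j(x_1) = 0$ and $\nabla \varphi^j(x_1) = 0$, so the growth estimates on $R$ give $|P(y_1)| \le C|y_1|^{k+\delta}$ and $|\nabla P(y_1)| \le C|y_1|^{k-1+\delta}$. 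Using the homogeneity of $P$ (so that $P(y_1) = |y_1|^k P(y_1/|y_1|)$, and similarly for $\nabla P$) together with the \L ojasiewicz bound, this forces $\dist(y_1/|y_1|, \Sigma_P \cap \sph^{n-1}) \lesssim |y_1|^{\delta/\mu}$, hence $\dist(y_1, \Sigma_P) \lesssim |y_1|^{1+\delta/\mu}$. Combined with the previous step this yields $x_1 \in K(x_0)$ with $\eps = \delta/\mu$.

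The main obstacle lies in the uniformity of the constants $c, \mu, \delta, \rho$ as $x_0$ varies in $\cZ_k(\varphi^j) \cap B_{1-r}$: a priori the polynomial $P^j_k$ depends on $x_0$, and so do its singular set and its \L ojasiewicz exponent. This is handled by a compactness argument on the finite-dimensional space of homogeneous harmonic polynomials of degree $k$, normalized by a lower bound on their $L^2(B_1)$-norm; on such a compact set the \L ojasiewicz exponent and the radius of validity of Proposition~\ref{prop:decomp-k} can be chosen uniformly, producing the $x_0$-independent exponent $\eps$ and constants of the statement, exactly as in the original argument of \cite[Thm.~2.1]{CaffarelliFriedman1985partial}.
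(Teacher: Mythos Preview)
Your overall strategy---compare the leading polynomial $P$ at $x_0$ with the behavior of $\varphi^j$ at a nearby point $x_1\in\cZ_k(\varphi^j)$, and feed the resulting smallness into a \L ojasiewicz inequality on the sphere---is the right one, and it is essentially the Caffarelli--Friedman argument the paper invokes (the paper gives no proof of its own here). However, there is a genuine gap in the way you extract the $(n-2)$-plane.

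You only use that $\varphi^j(x_1)=0$ and $\nabla\varphi^j(x_1)=0$, which forces $y_1=x_1-x_0$ to lie near the cone $\Sigma_P=\{P=0,\ \nabla P=0\}$. But $\Sigma_P$ is \emph{not} a linear subspace in general, and a homogeneous cone of dimension $\le n-2$ is not contained in any cusp around a single $(n-2)$-plane. For instance, with $n=3$ and $P(x,y,z)=z(x^2-y^2)$ (harmonic, degree $3$), one computes $\Sigma_P$ to be the union of the $z$-axis and the two lines $\{z=0,\ y=\pm x\}$: three distinct lines through the origin spanning $\R^3$, so no single line $\pi$ (an $(n-2)$-plane here) has a cusp $\{d(x,\pi)<C|x|^{1+\eps}\}$ containing all of them. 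Your suggested fixes---taking a ``tangent subspace at $0$'' or covering $\Sigma_P\cap\sph^{n-1}$ by finitely many great $(n-3)$-spheres and ``absorbing the finite count into $C$''---do not produce a single plane with the required cusp property.

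The missing idea is that $x_1\in\cZ_k(\varphi^j)$ carries much more information than first-order vanishing: by Proposition~\ref{prop:decomp-k} applied at $x_1$, one has $\varphi^j(x)=P^{(x_1)}(x-x_1)+R^{(x_1)}(x-x_1)$ with $P^{(x_1)}$ homogeneous of degree $k$ and $|R^{(x_1)}(z)|\le C|z|^{k+\delta}$ (with $C$ uniform thanks to the lower bound on $\|P_k\|_{L^2}$). Subtracting the two expansions on a ball of radius $\sim|y_1|$ about $x_1$, the polynomial $z\mapsto P(y_1+z)-P^{(x_1)}(z)$ is bounded by $C|y_1|^{k+\delta}$ there, and interior estimates for polynomials give $|D^jP(y_1)|\le C|y_1|^{k-j+\delta}$ for \emph{all} $0\le j\le k-1$ (the terms from $P^{(x_1)}$ vanish at $z=0$ since it is homogeneous of degree $k$). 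Now run your \L ojasiewicz argument with $\sum_{j\le k-1}|D^jP(\omega)|^2$; its zero set on the sphere is $\cZ_k(P)\cap\sph^{n-1}$, and $\cZ_k(P)$ \emph{is} a linear subspace of dimension $\le n-2$ (this is exactly the content of the proposition proving \eqref{eq:Zinv} and the bound \eqref{eq:Zdim} later in the paper). That subspace---enlarged to dimension $n-2$ if necessary---is the plane $\pi_{x_0}$.
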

The proof of the above result essentially relies on the decomposition \eqref{eq:decomp-k} and on the properties of homogeneous harmonic polynomials, which represent the leading order term in that expansion. Since the nodal set is then locally `squeezed' on a $(n-2)$-plane, we can use this information to estimate its Hausdorff dimension by a simple covering argument. Before doing so, we need a technical result.
\begin{lemma}\label{lem:count}
Let $K\subseteq \R^n$ be the unit cube and $\pi$ an $(n-2)$-plane. Then for any small $\delta>0$ there exists $N(\delta)$ cubes of sides $2\delta$ covering $\pi\cap K$, with
\begin{equation}\label{eq:cubeN}
N(\delta)\leq \left(\frac{\sqrt{n}+1}{2\delta}\right)^{n-2}\,.
\end{equation} 
\end{lemma}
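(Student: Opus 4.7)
The plan is to reduce the $n$-dimensional covering problem to an $(n-2)$-dimensional one inside the plane $\pi$, and then lift the resulting cover back to $\R^n$. The key observation is that $\pi\cap K$, being contained in the unit cube $K$, has diameter at most $\sqrt{n}$; consequently, after fixing any orthonormal basis of $\pi$, the intersection $\pi\cap K$ is contained in an $(n-2)$-dimensional cube $Q_\pi\subset\pi$ of side $\sqrt{n}$ (aligned with that chosen basis of $\pi$, not necessarily with the ambient Euclidean frame).

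First, I would cover $Q_\pi$ by a regular grid of $(n-2)$-dimensional cubes of side $2\delta$ inside $\pi$, axis-aligned with respect to the chosen basis of $\pi$. Partitioning each side of $Q_\pi$ into segments of length $2\delta$ yields a grid with at most
\[
 \left\lceil \frac{\sqrt{n}}{2\delta} \right\rceil^{n-2}
\]
cubes. Next, for each such $(n-2)$-cube $q\subset\pi$ I would thicken it to an $n$-dimensional cube of side $2\delta$ by taking the Cartesian product of $q$ with a square of side $2\delta$ in the $2$-dimensional orthogonal complement $\pi^\perp$, centered on $\pi$. Since the statement of the lemma does not require the covering cubes to be axis-aligned with respect to the ambient coordinates of $\R^n$, this lifting is allowed, and the resulting family of $n$-cubes of side $2\delta$ evidently covers $\pi\cap K\subset Q_\pi$.

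Finally, for $\delta$ small (in particular $\delta\leq 1/2$, so that $2\delta\leq 1$), the elementary estimate
\[
 \left\lceil \frac{\sqrt{n}}{2\delta} \right\rceil
 \leq \frac{\sqrt{n}}{2\delta} + 1
 \leq \frac{\sqrt{n}+1}{2\delta}
\]
yields the bound \eqref{eq:cubeN}. The argument is essentially combinatorial and I do not anticipate any serious obstacle; the only mild point worth flagging is the choice of orientation of the covering cubes, which is tied to the (in general tilted) plane $\pi$ rather than to the standard frame of $\R^n$. This lemma will then feed directly into the covering argument used to derive the Hausdorff dimension estimate in Theorem \ref{thm:Hmeas}, by combining it with the cusp-like localization provided by Lemma \ref{lem:cusplike}.
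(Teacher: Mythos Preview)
Your proposal is correct and follows essentially the same approach as the paper: bound $\pi\cap K$ by an $(n-2)$-cube of side $\sqrt{n}$ inside $\pi$, tile it by a grid of $(n-2)$-cubes of side $2\delta$, and count. The only cosmetic difference is that the paper first rotates coordinates so that $\pi=\{x_{n-1}=x_n=0\}$, which makes the thickening step implicit (the $n$-cubes are then automatically axis-aligned), and phrases the count as a volume comparison $(2\delta)^{n-2}N(\delta)\le(\sqrt{n}+1)^{n-2}$ rather than via the ceiling-function estimate; the content is identical.
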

\begin{proof}
Consider, for simplicity, coordinates such that $\pi=\{x_{n-1}=x_n=0\}$. Notice that $\pi\cap K$ is contained in an $(n-2)$-cube of side $\sqrt n$. Covering $\pi\cap K$ with cubes, with disjoint interior, of sides $2\delta$ and parallel to the coordinate axis, for the total volume of the covering one gets
\[
(2\delta)^{n-2}N(\delta)\leq(\sqrt n + 1)^{n-2}\,,
\]
$N(\delta)$ being the number of the cubes involved, and \eqref{eq:cubeN} follows.
\end{proof}

\begin{proof}[Proof of Theorem \ref{thm:Hmeas}]
We start by considering, for fixed $0<\mu<1$, $\eta>0$, the set
\[
\cZ^{\mu,\eta}_{k}(\varphi^j)=\{x\in \cZ_k(\varphi^j)\cap B_{1-\mu}\,:\, \Vert P^j_k\Vert_{L^2(B_1)}\geq\eta\} \,.
\]
as, in order to apply the previous Lemma \ref{lem:cusplike}, we need a positive lower bound on $\Vert P^j_k\Vert_{L^2(B_1)}$. By such result, we know that there exist $\eps,C_0,l_0>0$ such that 
\[
S:=\cZ^{\mu,\eta}_{k}(\varphi^j)
\]
 is contained in the $(C_0l)^{1+\eps}$-neighborhood of an $(n-2)$-plane $\pi$, for any $l\leq l_0$. 
 
 We now argue by induction, constructing a sequence of coverings of $S$ with cubes of shrinking sides. For a suitable sequence $(m_k)$ (to be chosen later), cover the set $S$ with cubes of side $2^{-m_k}$, at step $k$. Passing to step $k+1$, scale the sides of the cubes by $\delta=2^{-m_k\eps}$ and apply Lemma \ref{lem:count}.
 Then by \eqref{eq:cubeN} the number $N_k$ of cubes in the covering at step $k$ satisfies
\[
N_{k+1}\leq N_{k}\left(\frac{\sqrt n +1}{2^{-m_k\eps+1}} \right)^{n-2}\,.
\]
Choosing $m_{k+1}=m_{k}(1+\eps)$, we thus get
\[
N_{k+1}\leq N_{k}\left(\frac{\sqrt n +1}{2^{-(1+\eps)^k\eps+1}} \right)^{n-2}\,.
\]
Iterating the estimate, one finds
\[
N_{k+1}\leq \left(\frac{\sqrt n+1}{2} \right)^{n-2}\left(2^{\eps\sum^k_{j=1}(1+\eps)^j} \right)^{n-2}\leq C^k 2^{(1+\eps)^{k+1}(n-2)}\,,
\]
$C=C(n)>0$ being a dimensional constant. Using the above inequality, we can estimate the $(n-2+\gamma)$-measure of $S$ as
\[
\begin{split}
\cH^{n-2+\gamma}(S)&\leq\liminf_{k\to\infty}(2^{-(1+\eps)^{k+1}})^{n-2+\gamma}N_{k+1} \\
&=\lim_{k\to\infty} C^k 2^{-(1+\eps)^{k+1}\gamma}=0\,,\qquad\forall \gamma>0\,.
\end{split}
\]
Taking $\eta\to0^+$ one thus gets
\[
\cH^{n-2+\gamma}(\cZ_k\cap B_{1-\mu})=0\,,\qquad\forall\gamma>0\,,
\]
and \eqref{eq:Hmeas} finally follows letting $\mu\to0^+$.
\end{proof}

\section{A frequency function for spinors}\label{sec:freq}

In this section we study a version of Almgren's frequency function for spinors \eqref{eq:freq}, as anticipated in Section \ref{sec:intro}. 
Our aim is to make the notion of vanishing order at point clear for a non-smooth spinor and to obtain a quantitative control on it. In particular, we show that such a function is globally bounded on $M$ (or on a compact region, if the manifold itself is not compact). 
This strategy is well-known for second order elliptic operators on scalar functions, see \cite{CheegerNaberValtorta2015critical, NaberValtorta2017volume} and references therein, as well as~\cite{Taubes2014zero,HaydysWalpuski2015compactness} for the spinorial case. 
However, here, inspired by \cite{GarofaloLin1986monotonicity}, we choose to use a version of frequency function which is slightly different from the one typically used in the mentioned references on Dirac equations. 
We believe that this choice is more suited for our purposes and allows to deal with a larger classe of equations.

We point out that in order to incorporate the influence of the local geometry, we choose to work in the original Riemannian metric~$g$. Alternatively, of course, one could equivalently work in Euclidean frames, by carefully keeping track of the Christoffel symbols.

Generally speaking, the frequency function at a point $x\in M$ usually takes the form 
\begin{align}\label{eq:frequency}
 N(x,r)= \frac{r D(x,r)}{ H(x,r)}\,,\quad r>0\,,
\end{align}
with~$D(x,r)$ and~$H(x,r)$ to be suitably chosen according to the problem under consideration. 
For the moment the basepoint~$x$ will be fixed, thus we will omit it and simply write~$H(r), D(r)$ and~$N(r)$, regarding them as functions in~$r$. Later on we will also consider the dependence on the basepoint.

If~$H$ and~$D$ are nonzero (even only for small~$r>0$) and differentiable, then 
\begin{align}
 \frac{N'}{N}=\frac{1}{r}+\frac{D'}{D}-\frac{H'}{H}.
\end{align}
This can be used to show certain (almost) monotonicity of~$N(r)$ by analyzing the above differential identity. We refer the reader, for instance, to \cite{Almgren1979Dirichlet, GarofaloLin1986monotonicity, Han1994singular} for the case of second order elliptic equations. 
\medskip

In the argument below  local coordinates are understood to be the normal ones.

Moreover, in this section we will assume that the map $V$ in \eqref{eq:Dirac eq-V} satisfies \eqref{eq:Vf} or \eqref{eq:Vq}.

\subsection{The denominator function}
The choice of the denominator~$H$ is quite standard. That is, we take
\begin{align}\label{eq:H}
 H(r)=\int_{\p B_r} |\psi|^2\ds_r 
 =\int_{\sph^{n-1}} |\psi(r,\theta)|^2 r^{n-1}\sqrt{\det(w_{ij}(r,\theta))} \dd\theta.   
\end{align}
Its derivative is 
\begin{align}\label{eq:H'}
 H'(r)
 =&\int_{\sph^{n-1}} (n-1) r^{n-2}|\psi(r,\theta)|^2 \sqrt{\det(w_{ij}(r,\theta))} 
 + 2\Abracket{\snabla_{\nu}\psi,\psi}r^{n-1}\sqrt{\det(w_{ij}(r,\theta))} \dd\theta \\
 &\qquad +\int_{\sph^{n-1}} |\psi(r,\theta)|^2 r^{n-1} \frac{\p}{\p r}\sqrt{\det(w_{ij}(r,\theta))} \dd\theta \\
 =&\frac{n-1}{r}H(r) + 2\int_{\p B_r}\Abracket{\snabla_{\nu}\psi,\psi}\ds_r 
 + \int_{\p B_r} |\psi|^2 W(r,\theta) \ds_r .
\end{align}

\subsection{The numerator function}

Dealing with an equation of general form as \eqref{eq:Dirac eq-V}, the choice of the suitable numerator function is not obvious. 
This is actually the main difficulty to generalized the frequency method to PDE systems. 
For Dirac equations, some generalizations are considered in the literature, see e.g.~\cite{Taubes2014zero, HaydysWalpuski2015compactness, DoanWalpuski2021existence}, in the setting of Seiberg-Witten equations.
However, since we do not find them convenient for our purposes, we prefer to follow \cite{GarofaloLin1986monotonicity} and by analogy we consider the function
\begin{align}\label{eq:D}
 D(r)= \int_{\p B_r} \Abracket{\snabla_{\nu}\psi,\psi}\ds_r. 
\end{align}
With this choice \eqref{eq:H'} reads as 
\begin{align}
 H'(r)=\frac{n-1}{r}H(r)+2D(r)+ \int_{\p B_r} |\psi|^2 W(r,\theta)\ds_r. 
\end{align}
Moreover, thanks to Gauss--Green formula and Lichnerowicz formula, we have 
\begin{align}
 D(r)= \int_{\p B_r}\Abracket{\snabla_\nu\psi,\psi}\ds_r
 =&\int_{B_r} |\snabla\psi|^2 -\Abracket{\snabla^*\snabla\psi,\psi}\dv_g \\
 =&\int_{B_r} |\snabla\psi|^2+\frac{\Scal}{4}|\psi|^2 - \Abracket{\D V(\psi),\psi}\dv_g. 
 \end{align}
Thus by the co-area formula
\begin{align}\label{eq:D'}
 D'(r)=\int_{\p B_r} |\snabla\psi|^2 -\Abracket{\snabla^*\snabla\psi,\psi}\ds_r.
\end{align}
We remark again that~$\snabla^*\snabla\psi$ is defined pointwisely for~$V$ among the particular types under consideration. 

Moreover, by Lichnerowicz formula, 
\begin{align}
 D(r)=\int_{B_r} |\snabla\psi|^2+\frac{\Scal}{4}|\psi|^2 -|\D\psi|^2\dv_g 
      -\int_{\p B_r} \Abracket{\gamma(\nu)\D\psi,\psi}\ds_r.
\end{align}
In general, the above boundary integral above is troublesome and we do not know how to deal with it in full generality.. 
However, for~$V$ satisfying \eqref{eq:Vf} or \eqref{eq:Vq}, we can still handle it. 
Indeed, if~$V$ is of type \eqref{eq:Vf}, then due to the skew-symmetry of Clifford multiplications by tangent vectors, 
\begin{align}
 \Abracket{\gamma(\nu)\D\psi,\psi}
 =f(\psi)\Abracket{\gamma(\mu)\psi, \psi}=0,
\end{align}
hence the boundary integral drops out in this case. 
Then the co-area formula tells 
\begin{align}
 D'(r)=\int_{\p B_r} |\snabla\psi|^2 +\frac{\Scal}{4}|\psi|^2 - |V(\psi)|^2\ds_r, 
\end{align}
and in particular, denoting~$\|\Scal\|=\|\Scal\|_{L^\infty}$, we have 
\begin{align}
 D'(r) -\int_{B_r} |\snabla\psi|^2 \ds_r
 =&\int_{\p B_r} \frac{\Scal}{4}|\psi|^2 - |V(\psi)|^2 \ds_r  \\
 \ge&  -\parenthesis{\frac{\|\Scal\|}{4} +\CLip } \int_{\p B_r} |\psi|^2 \ds_r 
    = -\parenthesis{\frac{\|\Scal\|}{4} +\CLip } H(r). 
\end{align}
If~$V$ is of type (Vq), then 
\begin{align}
 D'(r)=\int_{\p B_r} |\snabla\psi|^2 +\frac{\Scal}{4}|\psi|^2 - \Abracket{\D V(\psi),\psi}\ds_r. 
\end{align}
and by the condition~\eqref{eq:Vq} we have 
\begin{align}
 D'(r)-\int_{\p B_r} |\snabla\psi|^2\ds_r 
 \ge -\parenthesis{\frac{\|\Scal\|}{4} + \Cq(1+\|\snabla\psi\|_{L^\infty(K)})} H(r). 
\end{align}
Thus in the cases under consideration we have 
\begin{align}\label{eq:boundary-Laplace}
  D'(r)-\int_{\p B_r} |\snabla\psi|^2\ds_r 
 \ge -\CbL  H(r)
\end{align}
for some constant
\begin{align}
 \CbL=\CbL(\|\Scal\|, \CLip, \Cq, \|\snabla\psi\|_{L^\infty})>0. 
\end{align}
This fact will be of crucial importance in later arguments.    
\medskip

In contrast to the case of the scalar functions treated in~\cite{GarofaloLin1986monotonicity}, here it is unclear whether the numerator function~$D(r)$ is positive, even for small~$r>0$. 
Actually, what we really need is the local almost positivity of \eqref{eq:freq}, in the sense that for any~$C_N>0$, the quantity~$N(r)+C_N$ is positive for small~$r>0$.
\begin{prop}
Fix $C_N>0$. There exists $\rPos>0$ such that for any $0<r<\rPos$ there holds
\begin{equation}\label{eq:Nalmostpositive}
N(r)+C_N(r)>0\,.
\end{equation}
\end{prop}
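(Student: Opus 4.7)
The plan is as follows. Since~$H(r)>0$ for~$r>0$ sufficiently small (this follows from Proposition~\ref{prop:decomp-k}: near~$x_0$ we have~$\psi=\beta(P_k+Q_k)$ with~$P_k\not\equiv 0$ a vector of homogeneous harmonic polynomials of degree~$k$, so that~$H(r)\sim cr^{n-1+2k}$ as~$r\to 0^+$), the inequality~\eqref{eq:Nalmostpositive} is equivalent to
\begin{align}
 rD(r)+C_N H(r)>0
\end{align}
for~$r$ small. I would bound~$D(r)$ from below by using the integral identity deduced from the Lichnerowicz formula,
\begin{align}
 D(r)=\int_{B_r}|\snabla\psi|^2+\frac{\Scal}{4}|\psi|^2-\Abracket{\D V(\psi),\psi}\dv_g,
\end{align}
which gives~$D(r)\ge \int_{B_r}|\snabla\psi|^2\dv_g - C\int_{B_r}|\psi|^2\dv_g$ for a constant~$C$ depending only on~$\|\Scal\|_{L^\infty}$, on the Lipschitz/structural constants of~$V$, and on~$\|\snabla\psi\|_{L^\infty(K)}$ on a compact neighborhood~$K$ of~$x$; here I use crucially that in both \eqref{eq:Vf} and \eqref{eq:Vq} the assumed growth of~$\D V(\psi)$ together with~$\psi\in C^{1,\alpha}$ yields~$|\Abracket{\D V(\psi),\psi}|\le C|\psi|^2$ locally.

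Next I would apply the spinorial Hardy inequality \eqref{eq:Hardy-spinor} to bound
\begin{align}
 \int_{B_r}|\psi|^2\dv_g \le \CHardy r\, H(r)+\CHardy^2 r^2 \int_{B_r}|\snabla\psi|^2\dv_g,
\end{align}
valid for~$r<\rHardy$. Combining the two estimates yields
\begin{align}
 D(r)\ge \bigl(1-C\,\CHardy^2 r^2\bigr)\int_{B_r}|\snabla\psi|^2\dv_g-C\,\CHardy r\, H(r).
\end{align}

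Finally, choosing~$\rPos\in(0,\rHardy)$ so small that~$C\,\CHardy^2 r^2\le \tfrac12$ and~$C\,\CHardy r^2\le C_N/2$ for all~$r\in(0,\rPos)$, I obtain
\begin{align}
 rD(r)+C_N H(r)\ge \tfrac{r}{2}\int_{B_r}|\snabla\psi|^2\dv_g+\tfrac{C_N}{2} H(r)>0,
\end{align}
which is the desired conclusion. The main technical point is to ensure that the negative contribution from the potential term and the scalar curvature is absorbed for small~$r$; this is exactly where the Hardy inequality with only \emph{dimensional} coefficients, together with the locally~$L^\infty$ bound on~$\snabla\psi$, becomes essential. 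Positivity of~$H(r)$ is guaranteed by the local harmonic expansion of~$\psi$.
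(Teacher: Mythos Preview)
Your approach is essentially the paper's: rewrite $D(r)$ via the Lichnerowicz identity as $\int_{B_r}|\snabla\psi|^2+\frac{\Scal}{4}|\psi|^2-\Abracket{\D V(\psi),\psi}\dv_g$, bound the negative pieces by $C\int_{B_r}|\psi|^2$, and absorb the latter with the Hardy inequality~\eqref{eq:Hardy-spinor}. Your justification of $H(r)>0$ from the local expansion is a clean alternative to the unique continuation argument the paper uses afterwards.

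There is one genuine inaccuracy in the \eqref{eq:Vf} case. Assumption~\eqref{eq:f1} only gives
\[
|\Abracket{\D V(\psi),\psi}|\le C_f\bigl(|\psi|^2+|\psi|^{1+\tau}|\snabla\psi|\bigr),\qquad \tau\in(0,1),
\]
and the cross term $|\psi|^{1+\tau}|\snabla\psi|$ is \emph{not} pointwise dominated by $C|\psi|^2$ near a zero (since $1+\tau<2$), even after using $|\snabla\psi|\in L^\infty$. So the asserted bound $|\Abracket{\D V(\psi),\psi}|\le C|\psi|^2$ fails for type~\eqref{eq:Vf} as written. The paper sidesteps this: for~\eqref{eq:Vf} it uses the alternative representation
\[
D(r)=\int_{B_r}|\snabla\psi|^2+\tfrac{\Scal}{4}|\psi|^2-|\D\psi|^2\,\dv_g-\int_{\p B_r}\Abracket{\gamma(\nu)\D\psi,\psi}\ds_r,
\]
observes that the boundary term vanishes because $\Abracket{\gamma(\nu)\psi,\psi}=0$, and then simply uses $|\D\psi|^2=|V(\psi)|^2\le \CLip^2|\psi|^2$ from the Lipschitz hypothesis. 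Your route can also be repaired directly: apply Young's inequality $|\psi|^{1+\tau}|\snabla\psi|\le \eps|\snabla\psi|^2+C_\eps|\psi|^{2(1+\tau)}$, absorb the $\eps|\snabla\psi|^2$ piece into the leading term, and note $|\psi|^{2(1+\tau)}\le \|\psi\|_{L^\infty(B_r)}^{2\tau}|\psi|^2$. After this fix the rest of your argument goes through verbatim and yields the same inequality~\eqref{eq:almost positivity}.
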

\begin{proof}
 Observe that \eqref{eq:Nalmostpositive} is equivalent to the local positivity of~$r D(r)+ C_N H(r)$ for small~$r>0$.  For~$V$ of type \eqref{eq:Vf} we find
\[
\begin{split}
 rD(r) & + C_N H(r)\\
 =& \, r\int_{B_r} |\snabla\psi|^2 +\frac{\Scal}{4}|\psi|^2 - |V(\psi)|^2 \dv_g  + C_N\int_{\p B_r} |\psi|^2 \ds_r\\
 \ge& \, r\int_{B_r}|\snabla\psi|^2 \dv_g -\parenthesis{\frac{\|\Scal\|}{4}+\CLip} r\int_{B_r} |\psi|^2 \dv_g 
      +C_N \int_{\p B_r} |\psi|^2 \ds_r \\
 \ge& \, r\int_{B_r} |\snabla\psi|^2\dv_g -\parenthesis{\frac{\|\Scal\|}{4} + \CLip }\CHardy^2 r^3 \int_{B_r} |\snabla\psi|^2 \dv_g  \\
  & -\parenthesis{\frac{\|\Scal\|}{4}+\CLip} \CHardy r^2 \int_{\p B_r} |\psi|^2\ds_r
  +C_N \int_{\p B_r} |\psi|^2 \ds_r \\
 \ge&\,  r\parenthesis{1-\parenthesis{\frac{\|\Scal\|}{4} +\CLip}\CHardy^2 r^2 } \int_{B_r} |\snabla\psi|^2\dv_g \\
     & +\parenthesis{C_N- \parenthesis{\frac{\|\Scal\|}{4} +\CLip} \CHardy r^2 } \int_{\p B_r} |\psi|^2 \ds_r. 
\end{split}
\]
Thus there exists~$\rPos>0$ and~$\CPos>0$ such that for~$0<r<\rPos$ 
\begin{equation}\label{eq:almost positivity}
 r D(r) + C_N H(r)
 \ge \CPos r \int_{B_r} |\snabla\psi|^2 \dv_g 
     + \CPos \int_{\p B_r} |\psi|^2 \ds_r 
\end{equation}
which is positive. 

Meanwhile, if~$V$ is of type \eqref{eq:Vq}, since~$\psi\in C^{1,\alpha}$, the gradient~$|\snabla\psi|$ is locally uniformly bounded, so that 
\begin{align}
 |\D V(\psi)| \le \Cq \parenthesis{ |\psi|+ |\psi||\snabla\psi|} \le \Cq' |\psi| 
\end{align}
near the zero section locally; for simplicity we will write~$\Cq$ for~$\Cq'$, hence the assumption~\eqref{eq:Vq} becomes (locally) 
\begin{align}
 |\D V(\psi)| \le \Cq |\psi|. 
\end{align}
In this case, 
\begin{align}
 rD(r) &+ C_N H(r)\\
 =& r\int_{B_r} |\snabla\psi|^2 +\frac{\Scal}{4}|\psi|^2 - \Abracket{\D V(\psi),\psi} \dv_g  + C_N\int_{\p B_r} |\psi|^2 \ds_r\\
 \ge&r\int_{B_r} |\snabla\psi|^2 \dv_g -\parenthesis{\frac{\|\Scal\|}{4}+ \Cq}\int_{B_r}|\psi|^2\dv_g 
    +C_N\int_{\p B_r} |\psi|^2\ds_r
\end{align}
and then the same argument as above works also in this case. 
\end{proof}

We remark that the constant~$C_N$ in~\eqref{eq:almost positivity} can be very small (and then choose~$\rPos$ accordingly), reflecting the fact that~$D(r)$ is ``nonnegative'' locally near~$r=0$.

\subsection{Almost monotonicity of \texorpdfstring{$N(r)+C_N$}{N(r)+CN}}

Now fix a positive constant~$C_N$ and consider the function~$N(r)+C_N$. 
Then 
\begin{align}
 \frac{\dd}{\dd r}(N(r)+C_N)
 =N'(r)
 =& \parenthesis{\frac{1}{r}+\frac{D'}{D}-\frac{H'}{H}} N(r)
\end{align}
as long as the RHS is defined. 
\begin{lemma}\label{lemma:almost monotonicity-differential form}
 There exist a small radius~$\rAM>0$ and constants~$\CAM>0$ and~$\beta\in (0,1) $ such that 
 \begin{align}\label{eq:almost monotonicity for frequency}
  N'(r)\ge -\CAM r^\beta \parenthesis{N(r)+C_N}, \qquad \forall r\in (0, \rAM). 
 \end{align}

\end{lemma}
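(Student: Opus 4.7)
The plan is to establish the differential inequality directly. Setting $\tilde N(r) := N(r) + C_N$, so that $r D(r) = H(r)(\tilde N - C_N)$, the almost positivity \eqref{eq:almost positivity} gives both $\tilde N(r) \geq \CPos$ and $r \int_{B_r}|\snabla \psi|^2 \dv_g \leq \tilde N H/\CPos$ for $r \in (0, \rPos)$. These two consequences of \eqref{eq:almost positivity} will be used repeatedly to reabsorb error terms into $\tilde N$.

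A direct computation using the definitions of $H$ and $D$ together with formula \eqref{eq:H'} yields
\[
H(r)^2 N'(r) = H D + r H D' - r D H' = -(n-2) H D + r H D' - 2 r D^2 - r D \int_{\p B_r}|\psi|^2 W(r,\theta)\ds_r.
\]
To estimate $r H D'$ from below I combine three ingredients. First, \eqref{eq:boundary-Laplace} gives $r D'(r) \geq r \int_{\p B_r}|\snabla \psi|^2 \ds_r - r \CbL H$. Second, the Pohozaev identity \eqref{eq:Pohozaev-spinor} rewrites this boundary integral as $2 r \int_{\p B_r}|\snabla_\nu \psi|^2 \ds_r + (n-2) D + E(r)$, where $E(r)$ stands for the two interior integrals in \eqref{eq:Pohozaev-spinor} (a curvature term involving $R_{\vec x, e_j}$ and a Lichnerowicz term involving $\snabla^*\snabla\psi$). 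Third, Cauchy--Schwarz on $\p B_r$ gives $\int_{\p B_r}|\snabla_\nu \psi|^2 \ds_r \geq D^2/H$. Combining these and multiplying by $H$,
\[
r H D'(r) \geq 2 r D^2 + (n-2) D H + H E(r) - r \CbL H^2,
\]
and plugging into the previous identity produces the exact cancellation of the $HD$ and $rD^2$ contributions, leaving the clean inequality
\[
H(r)^2 N'(r) \geq H E(r) - r \CbL H^2 - r D \int_{\p B_r}|\psi|^2 W(r,\theta)\ds_r.
\]

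It then remains to bound the right-hand side below by $-C r^\beta \tilde N(r) H(r)^2$. The $r \CbL H^2$ term is immediate from $\tilde N \geq \CPos$. The integral against $W$ is controlled using $|W(r,\theta)| \leq \CCoord r$ together with $r |D|/H = |\tilde N - C_N| \leq (1 + C_N/\CPos) \tilde N$, which gives an $O(r) \tilde N$ contribution. The bulk $E(r)$ splits into a curvature part, bounded pointwise by $C \rho |\psi| |\snabla \psi|$ and handled by Young's inequality, the Hardy inequality \eqref{eq:Hardy-spinor}, and the almost-positivity bound on $\int |\snabla \psi|^2$; and a Lichnerowicz part, where $\snabla^* \snabla \psi = \D V(\psi) - \tfrac{\Scal}{4}\psi$, whose size depends on the type of $V$.

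The main subtlety, and what I expect to be the most delicate step, is the Lichnerowicz part of $E(r)$ under hypothesis \eqref{eq:Vf}, where $|\D V(\psi)| \leq C_f(|\psi| + |\psi|^\tau |\snabla \psi|)$ with $\tau \in (0,1)$: the factor $|\psi|^\tau |\snabla \psi|$ carries no obvious smallness. The idea is to exploit $\psi \in C^{1,\alpha}$ with $\psi(0) = 0$, which gives the pointwise bound $|\psi(x)| \leq C \rho$ on a small ball about the nodal point, so $|\psi|^\tau \leq C r^\tau$ on $B_r$; combined with Hardy and the almost positivity, this produces a contribution of order $r^\tau \tilde N H$, yielding $\beta = \tau(f) \in (0,1)$. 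Under hypothesis \eqref{eq:Vq}, $|\snabla \psi|$ is locally bounded so $|\D V(\psi)| \leq C |\psi|$ locally, and the analogous estimate works with any $\beta \in (0,1)$.
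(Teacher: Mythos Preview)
Your proposal is correct and follows essentially the same approach as the paper: both compute $N'$ via \eqref{eq:H'}, use the Pohozaev identity \eqref{eq:Pohozaev-spinor} together with Cauchy--Schwarz and \eqref{eq:boundary-Laplace} to produce the exact cancellation of the $(n-2)DH$ and $2rD^2$ terms, and then control the remaining bulk error $E(r)$ via the Hardy inequality \eqref{eq:Hardy-spinor} and the almost-positivity bound \eqref{eq:almost positivity}. Your organization---working directly with $H^2 N'$ and absorbing all errors into $C r^\beta \tilde N H^2$---is a bit more streamlined than the paper's (which rewrites the target differential inequality as an equivalent integral inequality and verifies it term by term), and your use of the linear bound $|\psi|\le C\rho$ to get $|\psi|^\tau \le C r^\tau$ is the safer choice (the paper invokes $\|\psi\|_{L^\infty(B_r)}^\tau \le C r^{(1+\alpha)\tau}$, which tacitly uses more than just $\psi(0)=0$); but the substance of the argument is identical.
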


\begin{proof}
By~\eqref{eq:H'}, the inequality~\eqref{eq:almost monotonicity for frequency} is equivalent to
 \begin{align}
  (2-n+ \CAM r^{1+\beta})DH+ \CAM C_N r^{\beta}H^2  + rHD'
  \ge 2r D^2 + rD \int_{\p B_r}|\psi|^2 W(r,\theta)\ds_r. 
 \end{align}
 By Cauchy--Schwartz we have 
 \begin{align}
  D(r)^2= \parenthesis{\int_{\p B_r} \Abracket{\snabla_\nu\psi, \psi}\ds_r }^2
  \le \parenthesis{\int_{\p B_r}|\snabla_\nu\psi|^2\ds_r}\parenthesis{\int_{\p B_r} |\psi|^2\ds_r}
 \end{align}
 so that 
 \begin{align}
  2rD(r)^2 \le 2 r H(r)\parenthesis{\int_{\p B_r}|\snabla_\nu \psi|^2\ds_r}. 
 \end{align}

 To handle with~$D'(r)$ we use the Pohozaev type formula for spinors~\eqref{eq:Pohozaev-spinor} to obtain 
\begin{align}
 \int_{\p B_r} |\snabla\psi|^2 \ds_r 
 =& \frac{n-2}{r}\int_{\p B_r} \Abracket{\snabla_\nu\psi,\psi}\ds_r 
  +2\int_{\p B_r}|\snabla_\nu\psi|^2 \ds_r \\
  & +\frac{2}{r}\int_{B_r} \Abracket{R_{\vec{x},e_j}\psi, \snabla_j\psi}\dv_g 
   +\frac{1}{r}\int_{B_r} \Abracket{2\snabla_{\vec{x}}\psi+(n-2)\psi, \snabla^*\snabla\psi}\dv_g.
\end{align}
Hence~\eqref{eq:D'} becomes 
\begin{align}
 D'(r)
 =&\frac{n-2}{r}D(r)+ 2\int_{\p B_r}|\snabla_\nu\psi|^2\ds_r 
   -\int_{\p B_r}\Abracket{\snabla^*\snabla\psi,\psi}\ds_r \\
  & +\frac{2}{r}\int_{B_r} \Abracket{R_{\vec{x},e_j}\psi, \snabla_j\psi}\dv_g 
   +\frac{1}{r}\int_{B_r} \Abracket{2\snabla_{\vec{x}}\psi+(n-2)\psi, \snabla^*\snabla\psi}\dv_g. 
\end{align}
Thus it suffices to show 
\begin{multline}\label{eq:almost monotonicity-i}
 \CAM r^{1+\beta} HD + \CAM C_N r^\beta H^2 
 -r H(r)\parenthesis{\int_{\p B_r} \Abracket{\psi,\snabla^*\snabla\psi}\ds_r}
 \\
 +H(r)\parenthesis{ \int_{B_r} 2\Abracket{R_{\vec{x},e_j}\psi,\snabla_j \psi}  + \Abracket{2\snabla_{\vec{x}}\psi + (n-2)\psi, \snabla^*\snabla\psi} \dv }\\ 
 \ge r D(r)\int_{\p B_r} |\psi|^2 W(r,\theta)\ds_r.
\end{multline}

At place where~$H(r)=0$,~\eqref{eq:almost monotonicity-i} trivially holds, so w.l.o.g. we may assume~$H(r)>0$ and~\eqref{eq:almost monotonicity-i} reduces to 
\begin{multline}\label{eq:almost monotonicity-ii}
 \CAM r^{1+\beta} D + \CAM C_N r^\beta H 
 -r\parenthesis{\int_{\p B_r} \Abracket{\psi,\snabla^*\snabla\psi}\ds_r}
 \\
 +\parenthesis{ \int_{B_r} 2\Abracket{R_{\vec{x},e_j}\psi,\snabla_j \psi}  + \Abracket{2\snabla_{\vec{x}}\psi + (n-2)\psi, \snabla^*\snabla\psi} \dv }\\
 \ge rD(r) \frac{\int_{\p B_r} |\psi|^2 B(r,\theta)\ds_r}{\int_{\p B_r}|\psi|^2\ds_r}.
\end{multline}
 
Recall that from~\eqref{eq:D'} and~\eqref{eq:boundary-Laplace} we have 
\begin{align}
 -r\parenthesis{\int_{\p B_r} \Abracket{\psi,\snabla^*\snabla\psi}\ds_r}
 \ge -\CbL r H(r). 
\end{align}

\

Next we estimate the integrals over~$B_r$. 
By the Hardy type inequality~\eqref{eq:Hardy-spinor}, we get
\begin{align}
 \int_{B_r}2\Abracket{R_{\vec{x},e_j}\psi,\snabla_j\psi}\dv
 \le & 2|\vec{x}|\|R\| \int_{B_r} |\psi||\snabla\psi|\dv 
 \le r\|R\|\int_{B_r}\frac{1}{r}|\psi|^2 + r|\snabla\psi|^2\dv  \\
 \le& r\|R\| \parenthesis{ \CHardy  H(r)+(1+ \CHardy^2 ) r\int_{B_r}|\snabla\psi|^2\dx} \\
 \le& \CHardy \|R\| r H(r) + \|R\|(1+ \CHardy^2) r^2 \int_{B_r}|\snabla\psi|^2\dx. 
\end{align}
By Lichnerowicz formular, we have 
\begin{align}
 \int_{B_r}  & 2\Abracket{\snabla_{\vec{x}}\psi, \snabla^*\snabla\psi}  \dv_g
 = \int_{B_r} 2\Abracket{\snabla_{\vec{x}}\psi, \D V(\psi)-\frac{\Scal}{4}\psi}\dv_g. 
\end{align}
For~$V$ of type \eqref{eq:Vf}, recalling that~\eqref{eq:f1}, we have  
\begin{align}
 \int_{B_r}  & 2\Abracket{\snabla_{\vec{x}}\psi, \snabla^*\snabla\psi}  \dv_g
 \ge -2r\int_{B_r}  \Cf |\psi|^\tau |\snabla\psi|^2 +  \parenthesis{\frac{\|\Scal\|}{4}+\Cf}|\psi||\snabla\psi|\dv_g\\
 \ge& -2\Cf r\int_{B_r} |\psi|^\tau |\snabla\psi|^2 \dv_g 
       -\parenthesis{\frac{\|\Scal\|}{4} +\Cf} r \int_{B_r} \frac{|\psi|^2}{r} +  r|\snabla\psi|^2 \dv_g \\
 \ge& -2\Cf r\int_{B_r} |\psi|^\tau |\snabla\psi|^2 \dv_g \\
    &  -\parenthesis{\frac{\|\Scal\|}{4}+\Cf} r  \parenthesis{ \CHardy H(r)+(1+\CHardy^2) r \int_{B_r} |\snabla\psi|^2\dv_g } \\
 \ge& -\parenthesis{\frac{\|\Scal\|}{4}+\Cf} \CHardy r H(r) \\
    &- r\parenthesis{ 2\Cf \|\psi\|^\tau_{L^\infty(B_r)} + \parenthesis{\frac{\|\Scal\|}{4} +\Cf }(1+\CHardy^2)r }\int_{B_r}|\snabla\psi|^2\dv_g.
\end{align}
Note that~$\|\psi\|^\tau_{L^\infty(B_r)}\le C r^{(1+\alpha)\tau}$
For~$V$ of type \eqref{eq:Vq}, the situation is similar but easier, since~\eqref{eq:Vq} is stronger than~\eqref{eq:f1}. 

We turn to the estimate for 
\begin{align}
 (n-2)\int_{B_r}  \Abracket{\psi,\snabla^*\snabla\psi}\dv_g
 =& (n-2)\int_{B_r} \Abracket{\psi, \D^2\psi+\frac{\Scal}{4}\psi }\dv_g.
\end{align}
For~$V$ of type \eqref{eq:Vf} integrating by parts one obtains 
\begin{align}
 (n-2)\int_{B_r}  &\Abracket{\psi,\snabla^*\snabla\psi}\dv_g
 =(n-2)\int_{B_r} |\D\psi|^2 + \frac{\Scal}{4}|\psi|^2 \dv_g \\
 \ge &- (n-2)\parenthesis{ \CLip+\frac{\|\Scal\|}{4} } \int_{B_r} |\psi|^2 \dv_g\\
 \ge &- (n-2)\parenthesis{ \CLip+\frac{\|\Scal\|}{4} } \parenthesis{ \CHardy r H(r) + \CHardy^2 r^2 \int_{B_r} |\snabla\psi|^2 \dv_g};
\end{align}
while for~$V$ of type \eqref{eq:Vq} we use~\eqref{eq:Vq} to get 
\begin{align}
 (n-2)\int_{B_r} & \Abracket{\psi,\snabla^*\snabla\psi}\dv_g
 \ge- (n-2)\parenthesis{\Cq+\frac{\|\Scal\|}{4}}\int_{B_r} |\psi|^2\dv_g \\
 \ge&- (n-2)\parenthesis{ \Cq+\frac{\|\Scal\|}{4} } \parenthesis{ \CHardy r H(r) + \CHardy^2 r^2 \int_{B_r} |\snabla\psi|^2 \dv_g}. 
\end{align}

\

To summarize, we see that
\begin{align}
  \CAM r^{1+\beta} D +& \CAM C_N r^\beta H 
 -r\parenthesis{\int_{\p B_r} \Abracket{\psi,\snabla^*\snabla\psi}\ds_r}
 \\
 +&\parenthesis{ \int_{B_r} 2\Abracket{R_{\vec{x},e_j}\psi,\snabla_j \psi}  + \Abracket{2\snabla_{\vec{x}}\psi + (n-2)\psi, \snabla^*\snabla\psi} \dv } \\
 \ge& \CAM  r^\beta\parenthesis{ r D(r)+ C_N H(r)} - C r H(r) \\
    &- C(r+ \|\psi\|^\tau_{L^\infty(B_r)}) r \int_{B_r } |\snabla\psi|^2 \dv_g \\
 \ge& \CAM r^\beta \parenthesis{ \CPos r \int_{B_r} |\snabla\psi|^2 \dv_g + \CPos H(r) } -Cr H(r) \\
    &- C(r+ \|\psi\|^\tau_{L^\infty(B_r)}) r \int_{B_r } |\snabla\psi|^2 \dv_g.
\end{align}
where in the last inequality we used~\eqref{eq:almost positivity}, and that $C$ is a universal constant. 
Moreover, note that the right hand side of~\eqref{eq:almost monotonicity-ii} can be estimated by 
\begin{align}
 r D(r)\frac{\int_{\p B_r}|\psi|^2 W(r,\theta)\ds_r}{\int_{\p B_r}|\psi|^2\ds_r} 
 \le \CCoord r^2 D(r)
\end{align}
and~$D(r)$ can be in turn estimated, for~$V$ of type (Vf) or (Vq), by 
\begin{align}
 D(r) \le C \int_{B_r} |\snabla\psi|^2\dv_g +  C H(r). 
\end{align}
Recall that~$\psi\in C^{1,\alpha}$ and then $\|\psi\|^\tau_{L^\infty(B_r)} \le C r^{(1+\alpha)\tau}$. 
Thus we can choose~$\beta\in (0,1)$ and~$\rAM>0$ small,~$\CAM$ large enough, so that~\eqref{eq:almost monotonicity for frequency} holds. 
 
\end{proof}

\subsection{A unique continuation principle}
As a first consequence of~\eqref{eq:almost monotonicity for frequency}, we first show that~$\psi$ cannot vanish in an open subset of~$\Omega$. 

\

First recall the formula~\eqref{eq:H'}:
\begin{align}
 H'(r)=\frac{n-1}{r} H + 2D(r) + \int_{\p B_r} |\psi|^2 W(r,\theta)\ds_r
 \ge \frac{2}{r}(r D(r)+ \frac{n-1}{2}H(r) - \frac{\CCoord r^2}{2}H(r))
\end{align}
which is nonnegative for small~$r\in (0, \rPos)$; and $H(r)$ is thus locally non-decreasing. 

We argue by contradiction and assume that~$\psi$ vanishes in~$B_{\rho_1}(y)$ but does not vanish identically in~$B_{\rho_2}(y)$ for some~$y\in \Omega$ and~$0<\rho_1 < \rho_2 < \rPos(y)$ (note that~$\rPos$ may taken to be independent of~$y$). 
Then, by the above monotonicity of~$H(r)$, there exists~$\rho_*\in [\rho_1, \rho_2]$ such that 
\begin{itemize}
 \item $H(\rho)=0$ for all~$\rho\le \rho_*$;
 \item $H(\rho)>0$ for~$\rho\in (\rho_*, \rPos)$.
\end{itemize}
Then for any~$\rho_* < s< t< \rPos$,  
\begin{align}
 \log H(t)-\log H(s)
 =& \int_s^t (\log H)'(r)\dd r 
   =\int_s^t \frac{n-1}{r} + \frac{2}{r} N(r)+ \frac{\int_{\p B_r}|\psi|^2 W(r,\theta)\ds_r}{\int_{\p B_r}|\psi|^2 \ds_r}\dd r \\
 \le& (n-1+ 2\sup_{r\in [s,t]} N(r)) \int_s^t \frac{1}{r}\dd r
  +\int_s^t \CCoord r\dd r\\
 \le& (n-1+2\sup_{r\in[s,t]} N(r)) \log\frac{t}{s} 
 +\frac{1}{2}\CCoord (t^2- s^2),
\end{align}
namely, we obtain a Harnack type inequality:
\begin{align}
 e^{-\frac{\CCoord}{2}t^2}H(t)\le e^{-\frac{\CCoord}{2}s^2}\parenthesis{\frac{t}{s}}^{n-1+2\sup_{[s,t]}N} H(s). 
\end{align}
Meanwhile for~$\rho_* < r< \rPos$,~$N(r)$ is well-defined and positive, and 
\begin{align}
 \frac{\dd}{\dd r} \log(N(r)+C_N)
 \ge -\CAM  r^\beta
\end{align}
from which it follows that 
\begin{align}
 \log\frac{N(t)+C_N}{N(s)+C_N} \ge -\frac{\CAM }{\beta+1} (t^{\beta+1}-s^{\beta+1})
\end{align}
and hence 
\begin{align}
 N(s)+C_N \le \exp\parenthesis{\frac{\CAM }{\beta+1}(t^{\beta+1}-s^{\beta+1})} (N(t)+C_N). 
\end{align}
In particular,~$N(\rho)$ (for~$\rho\in (\rho_*, r_2)$) is uniformly bounded by 
\begin{align}
 \sup_{\rho\in [s,t]} N(r) \le \exp\parenthesis{\frac{\CAM }{\beta+1}(r_2^{\beta+1}-\rho_*^{\beta+1})} (N(r_2)+C_N) -C_N\equiv N^*. 
\end{align}
and consequently 
\begin{align}
 e^{-\frac{\CCoord}{2}t^2}H(t)\le e^{-\frac{\CCoord}{2}s^2}\parenthesis{\frac{t}{s}}^{n-1+ 2N^*} H(s).
\end{align}
Letting~$s\searrow \rho_*$ we would get 
\begin{align}
 H(t)=0
\end{align}
for any~$t\in (\rho_*, \rPos)$, which is a contradiction. 
Thus the unique continuation property holds for~$\psi$ and~$H(r)>0$ for any~$r\in (0,\rPos)$.

\subsection{Almost monotonicity for~\texorpdfstring{$N(r)$}{N(r)}}

Now for a nonzero solution~$\psi$ of~\eqref{eq:Dirac eq-V}, assuming \eqref{eq:Vf} or \eqref{eq:Vq}, the frequency function~$N(y,r)$ is well-defined for~$y\in \mathcal{Z}(\psi)$ and~$0<r<\rPos$. 
Moreover, from Lemma~\ref{lemma:almost monotonicity-differential form} it follows that, for any~$0<s<t<\rPos$, 
\begin{align}
  N(s)+C_N \le \exp\parenthesis{\frac{\CAM }{\beta+1}(t^{\beta+1}-s^{\beta+1})} (N(t)+C_N).
\end{align}
Equivalently, this can be rephrased as follows.
\begin{lemma}\label{lem:monotonicity}
The function
\begin{align}
 \exp\parenthesis{\frac{\CAM }{\beta+1} s^{\beta+1}}(N(s)+ C_N)\,,\qquad s\in(0,\rPos)
\end{align}
is monotonely increasing. 
\end{lemma}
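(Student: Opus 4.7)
The plan is to show that the claimed monotonicity is just an integrated form of the differential inequality proved in Lemma~\ref{lemma:almost monotonicity-differential form}. Indeed, the derivation of the Harnack-type bound in the unique continuation argument above already exhibits the necessary manipulation; here we simply package it cleanly.

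The first step is to check that on the interval $(0,\rPos)$ the function $N(r)+C_N$ is strictly positive and of class $C^1$ in $r$. Positivity is exactly the content of \eqref{eq:Nalmostpositive}, so $N(r)+C_N > 0$ on $(0,\rPos)$. Regularity follows because $\psi\in C^{1,\alpha}$, so by the coarea formula $H(r)$ and $D(r)$ depend in a $C^1$ manner on $r$, and the unique continuation result just established rules out $H(r)=0$ on any subinterval.

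On this interval one can therefore divide \eqref{eq:almost monotonicity for frequency} by $N(r)+C_N>0$ to obtain
\begin{align}
 \frac{\dd}{\dd r}\log(N(r)+C_N) = \frac{N'(r)}{N(r)+C_N} \ge -\CAM \, r^{\beta}.
\end{align}
Integrating this inequality on any subinterval $[s,t]\subset(0,\rPos)$ gives
\begin{align}
 \log(N(t)+C_N)-\log(N(s)+C_N) \ge -\frac{\CAM}{\beta+1}\bigl(t^{\beta+1}-s^{\beta+1}\bigr),
\end{align}
which, after exponentiating and rearranging, is exactly
\begin{align}
 \exp\!\left(\frac{\CAM}{\beta+1}\, s^{\beta+1}\right)(N(s)+C_N)
 \le \exp\!\left(\frac{\CAM}{\beta+1}\, t^{\beta+1}\right)(N(t)+C_N).
\end{align}

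Equivalently (and perhaps more conceptually), one can differentiate the product directly: setting $F(s)=\exp\!\left(\tfrac{\CAM}{\beta+1}s^{\beta+1}\right)$, note that $F'(s)=\CAM s^{\beta}F(s)$, whence
\begin{align}
 \frac{\dd}{\dd s}\bigl[F(s)(N(s)+C_N)\bigr] = F(s)\bigl[N'(s)+\CAM s^{\beta}(N(s)+C_N)\bigr]\ge 0,
\end{align}
again by Lemma~\ref{lemma:almost monotonicity-differential form}. I do not expect any substantive obstacle at this stage, since the analytic heavy lifting was already carried out in establishing the differential inequality and the positivity \eqref{eq:Nalmostpositive}; the only care needed is to confirm that the denominator $H(r)$ stays away from zero on the full interval $(0,\rPos)$, which the unique continuation step preceding the statement guarantees.
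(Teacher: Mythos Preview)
Your argument is correct and is essentially the same as the paper's: the lemma is stated as an immediate reformulation of the inequality $N(s)+C_N \le \exp\bigl(\tfrac{\CAM}{\beta+1}(t^{\beta+1}-s^{\beta+1})\bigr)(N(t)+C_N)$, which the paper obtains (just before the lemma) by integrating the differential inequality of Lemma~\ref{lemma:almost monotonicity-differential form}, exactly as you do. Your version is simply more explicit about the positivity of $N+C_N$ and of $H$, and about the product-rule computation, but there is no substantive difference in approach.
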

\begin{cor}\label{cor:Nlimit}
\begin{align}\label{eq:limN}
 \lim_{s\to 0^+} N(y, s)\eqqcolon N(y,0)
\end{align}
exists for all~$y\in \mathcal{Z}(\psi)$. 
\end{cor}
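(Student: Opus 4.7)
The plan is to deduce the existence of the limit directly from the almost-monotonicity formulation in Lemma~\ref{lem:monotonicity}, together with the almost-positivity of $N(s)+C_N$ established before.

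First, I would fix $y \in \mathcal{Z}(\psi)$ and work on the interval $(0, \rPos)$, on which $H(y,s) > 0$ by the unique continuation property proved in the previous subsection, so that $N(y,s)$ is well-defined. Set
\begin{align}
F(s) \coloneqq \exp\!\left(\frac{\CAM}{\beta+1}\,s^{\beta+1}\right)\bigl(N(y,s) + C_N\bigr), \qquad s \in (0, \rPos).
\end{align}
By Lemma~\ref{lem:monotonicity}, $F$ is monotonically non-decreasing on $(0, \rPos)$, and by the almost-positivity estimate \eqref{eq:Nalmostpositive} (valid for $s$ small enough, shrinking $\rPos$ if necessary) we have $F(s) > 0$ throughout.

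Next, a monotone function bounded below has a limit from the right at every point, so $\lim_{s \to 0^+} F(s)$ exists and equals $\inf_{s \in (0,\rPos)} F(s) \in [0, +\infty)$. Since the exponential factor $\exp\bigl(\tfrac{\CAM}{\beta+1} s^{\beta+1}\bigr)$ is continuous and tends to $1$ as $s \to 0^+$, we can write
\begin{align}
\lim_{s \to 0^+} N(y,s) \;=\; \lim_{s \to 0^+} \exp\!\left(-\frac{\CAM}{\beta+1}\,s^{\beta+1}\right) F(s) \;-\; C_N \;=\; \lim_{s \to 0^+} F(s) \;-\; C_N,
\end{align}
and the limit on the right-hand side exists as a finite real number. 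Defining $N(y,0)$ to be this value yields \eqref{eq:limN}.

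I do not anticipate any serious obstacle: the real work has already been done in proving the almost-monotonicity (Lemma~\ref{lemma:almost monotonicity-differential form}) and the almost-positivity of $N + C_N$. The present statement is essentially a consequence of the classical fact that bounded monotone functions have one-sided limits, once these two ingredients are in place. The only point to watch is that one must invoke unique continuation to guarantee $H(y,s) > 0$ on $(0,\rPos)$, so that $N(y,s)$ is defined at every scale under consideration.
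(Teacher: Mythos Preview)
Your proposal is correct and is exactly the argument the paper has in mind: the corollary is stated immediately after Lemma~\ref{lem:monotonicity} without a separate proof, and the introduction makes the same computation explicit by writing $N(x,0)=\lim_{r\to 0^+}\exp\bigl(\tfrac{\CAM}{\beta+1}s^{\beta+1}\bigr)(N(x,s)+C_N)-C_N$. Your care in invoking unique continuation so that $H(y,s)>0$ and $N(y,s)$ is well-defined on $(0,\rPos)$ is appropriate and matches the paper's logic.
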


The frequency function acutally describes the vanishing order of the spinor~$\psi\in C^{1,\alpha}$. 
Indeed, if~$P=P_k$ is a~$\D_{g_0}$-harmonic spinor with the nonzero components consisting of homogeneous degree~$k$ polynomials, then, by Euler's theorem for homogeneous functions, 
\begin{align}
 \lim_{r\to 0^+} N_P(0, r) 
 =\lim_{r\to 0^+} \frac{ r\int_{\p B_r}\Abracket{\snabla_{\nu} P, P }\ds_r}{\int_{\p B_r} |P|^2 \ds_r}=k. 
\end{align}
This justifies the following 
\begin{dfn}
 Let $\psi$ be a solution to \eqref{eq:Dirac eq-f} and take $y\in\mathcal Z(\psi)$. The \emph{vanishing order} of $\psi$ at $y$ is defined as
	\begin{equation}\label{eq:vord}
	\mathcal O (y)=N_\psi(y,0):=\lim_{r\to 0^+}N_\psi(y,r)\,
	\end{equation}
	where the limit exists thanks to \eqref{eq:limN}.
\end{dfn}

Let~$\psi$ be a solution of~\eqref{eq:Dirac eq-V} and let~$x_0 =0 \in \cZ(\psi) $.  
Thanks to the decomposition~\eqref{eq:decomp} of~$\varphi=\beta^{-1}(\psi)$ in Lemma \ref{lem:decomp-spinor} at the point $x_0$,
\begin{align}
 \mathcal O(x_0)=\lim_{r\to0^+} N_\psi(x_0,r) 
 = \lim_{r\to0^+} r\frac{\int_{\p B_r} \langle\snabla_\nu \psi,\psi\rangle\,\ds_r}{\int_{\partial B_r}\vert\psi\vert^2\,\ds_r}
 = \lim_{r\to0^+} r\frac{\int_{\partial B_r} \langle\snabla_\nu P,P\rangle\,\ds_r}{\int_{\partial B_r}\vert P\vert^2\,\ds_r}=k 
\end{align}
where in the~$\varphi$ or~$P$ formulation, the metric is Euclidean. 
Thus we see that the degree of the homogeneous polynomial~$P$ is precisely the vanishing order of~$\psi$ at the basepoint of the local coordinates.
The frequency function is a powerful tool in studying the nodal sets.
In particular, it helps to control the vanishing order of a spinor~$\psi$ in a large scale, as shown in next subsection.

\subsection{Local uniform upper bound for the frequency}

We have seen that 
\begin{align}
 N(x,r) = \frac{ r D(x,r)}{H(x,r)}
\end{align}
is almost monotone increasing in $r$ (in the sense of Lemma \ref{lem:monotonicity}), where~$x$ is fixed. 
In this section we derive an upper bound for the frequency function, locally uniform in~$x$. 
\medskip

We will assume that the injectivity radius of~$M$ is larger than~$\rAM$, so that we are always referring to geodesic balls in normal coordinates, without worrying about overlapping. 

\begin{lemma}\label{lem:Nunifbound}
  Let~$\psi$ be a solution of~\eqref{eq:Dirac eq-V}, assuming \eqref{eq:Vf} or \eqref{eq:Vq}, and~$x_0\in\mathcal{Z}(\psi)$. 
  Then there exists~$C_4>0$ such that for any~$x\in B_{\rAM /4}(x_0)$ such that~$B_{2r}(x)\subset B_{\rAM }(0)$, we have 
 \begin{align}
  N(x,r)\le C_4( N(x_0, \rAM )+1). 
 \end{align}

\end{lemma}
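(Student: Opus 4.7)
\textbf{Plan of proof for Lemma \ref{lem:Nunifbound}.} The strategy has two parts. First, I would use the almost monotonicity of the frequency (Lemma \ref{lemma:almost monotonicity-differential form}) at the base point $x$ to reduce the bound on $N(x,r)$, for small $r$, to a bound at a fixed intermediate scale $R_0$. Second, I would compare the numerator $D(x,R_0)$ and the denominator $H(x,R_0)$ with the corresponding quantities at $x_0$ through a careful manipulation of the $L^2$ integrals of $\psi$ on concentric spheres and balls, using the hypothesis that $B_{\rAM/4}(x_0)$ contains $x$ to ensure favourable inclusions of balls around the two centers.

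\textbf{Step 1 (Reduction to a fixed scale).} Set $R_0 := 3\rAM/4$. Since $|x-x_0|\le \rAM/4$, the inclusion $B_{R_0}(x)\subset B_{\rAM}(x_0)$ holds, so $H(x,R_0)>0$ by the unique continuation argument of the previous subsection and $N(x,R_0)$ is well-defined. The monotonicity of $s\mapsto e^{\CAM s^{\beta+1}/(\beta+1)}(N(x,s)+C_N)$ on $(0,R_0]$ then yields
\begin{align}
N(x,r)+C_N \le \exp\!\left(\tfrac{\CAM R_0^{\beta+1}}{\beta+1}\right)\bigl(N(x,R_0)+C_N\bigr), \qquad \forall r\in(0,R_0].
\end{align}
The hypothesis $B_{2r}(x)\subset B_{\rAM}(x_0)$ forces $r\le R_0$, so it suffices to bound $N(x,R_0)$.

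\textbf{Step 2 (Upper bound on $D(x,R_0)$).} The volume representation of $D$ together with \eqref{eq:Vf} or \eqref{eq:Vq}, the inclusion $B_{R_0}(x)\subset B_{\rAM}(x_0)$, and the Hardy inequality \eqref{eq:Hardy-spinor} give
\begin{align}
D(x,R_0)\le \int_{B_{\rAM}(x_0)}|\snabla\psi|^2+C|\psi|^2\,\dv_g \le C_1\bigl(D(x_0,\rAM)+H(x_0,\rAM)\bigr)\le C_1' \bigl(N(x_0,\rAM)+1\bigr) H(x_0,\rAM),
\end{align}
after using the definition $N(x_0,\rAM)=\rAM D(x_0,\rAM)/H(x_0,\rAM)$.

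\textbf{Step 3 (Lower bound on $H(x,R_0)$) and conclusion.} This is the real work, and the main obstacle. Applying the almost monotonicity at $x_0$ gives the uniform upper bound $N(x_0,s)\le N^*:=e^{\CAM\rAM^{\beta+1}/(\beta+1)}(N(x_0,\rAM)+C_N)-C_N$ for all $s\in(0,\rAM]$. Inserting this into the differential identity \eqref{eq:H'} for $H(x_0,\cdot)$ produces a doubling-type estimate of the form $\tilde H(x_0,t)\le C(N^*)(t/s)^{n+2N^*}\tilde H(x_0,s)$, where $\tilde H(y,\rho):=\int_{B_\rho(y)}|\psi|^2\,\dv_g$. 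Combining this doubling with the chain of inclusions $B_{\rAM/4}(x_0)\subset B_{\rAM/2}(x)\subset B_{R_0}(x)\subset B_{\rAM}(x_0)$ and the analogous differential identity at $x$ (to pass from the volume quantity $\tilde H(x,\cdot)$ to the boundary quantity $H(x,R_0)$), one obtains a lower bound
\begin{align}
H(x,R_0)\ge c(N^*) H(x_0,\rAM).
\end{align}
Combining Steps 1--3 gives $N(x,r)\le C_4(N(x_0,\rAM)+1)$, as desired. The delicate point is Step 3: the two comparisons (volume to boundary on the $x$ side, and $x_0$ to $x$ for the volume integrals) must be chained in such a way that the resulting constant depends only on the already bounded frequency $N(x_0,\rAM)$ and not on the unknown $N(x,R_0)$; this is achieved by performing the doubling exclusively at $x_0$, where the frequency is controlled by hypothesis.
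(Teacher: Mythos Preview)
Your outline has the right overall shape, but Step~3 contains a real gap that prevents you from reaching the \emph{linear} bound $N(x,r)\le C_4(N(x_0,\rAM)+1)$ with $C_4$ independent of $N(x_0,\rAM)$. The lower bound $H(x,R_0)\ge c(N^*)H(x_0,\rAM)$ is obtained via doubling at $x_0$, and that doubling constant is of order $(t/s)^{n+2N^*}$; consequently $c(N^*)$ is of size $e^{-cN^*}$, i.e.\ exponentially small in $N(x_0,\rAM)$. Dividing your Step~2 numerator bound by this yields
\[
N(x,R_0)\ \lesssim\ (N(x_0,\rAM)+1)\,e^{\,c\,N(x_0,\rAM)},
\]
which is exponential rather than linear in $N(x_0,\rAM)$. (This would still be enough for Proposition~\ref{prop:UniformBound}, since a finite cover of a compact $M$ only needs \emph{some} bound, but it does not prove the lemma as stated.) Note also that the passage from the volume integral $\tilde H(x,\cdot)$ to the boundary quantity $H(x,R_0)$ cannot invoke the $H$-identity at $x$ without circularity; you can only use the bare monotonicity $H(x,R_0)\ge \tilde H(x,R_0)/R_0$, which is what forces the exponential loss.

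The paper's argument avoids this loss by a different mechanism: rather than bounding $D$ and $H$ separately, it compares the single quantity $\int_{B_{R_1}(x)}|\psi|^2$ in two ways. From below, via the annulus $B_{R_1}(x)\setminus B_{R_2}(x)$ and the $H$-identity at $x$, one picks up a factor $(R_1/R_2)^{A_1}$ with $A_1\sim N(x,R_2)$. From above, via the inclusions $B_{R_1}(x)\subset B_{R_1+|x|}(x_0)$ and then $B_{R_3}(x_0)\subset B_{R_2}(x)$, one picks up $((R_1+|x|)/R_3)^{A_2}$ with $A_2\sim N(x_0,\rAM)$, together with another factor of $H(x,R_2)$ that \emph{cancels} the one from the lower bound. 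Taking logarithms then gives an inequality of the form
\[
A_1\,\log\frac{R_1}{R_2}\ \le\ \log A_1 + C + A_2\cdot\mathrm{const},
\]
which yields the desired linear control of $A_1$ (hence of $N(x,R_2)$) by $A_2$ (hence by $N(x_0,\rAM)$). The key structural point is that the unknown frequency at $x$ appears as an \emph{exponent} on one side but only through a logarithm on the other; separating numerator and denominator as in your Steps~2--3 destroys this cancellation.
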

This is a well-known result for harmonic functions and, more generally, for solutions to scalar second order elliptic equations, for which we refer the reader to~\cite{CheegerNaberValtorta2015critical,NaberValtorta2017volume} and references therein.
\begin{proof}
 For simplicity of notation we take~$x_0=0$ and~$\rAM =1$. 
 Thus we need to show that for~$|x|<\frac{1}{4}$ and~$2r<1-|x|$, 
 \begin{align}
  N(x,r)\le C_4(N(0,1)+1). 
 \end{align}
 We may further assume that~$\rAM (x)=\rAM $ for all~$x\in B_{\rAM /2}(0)$, so that the almost monotonicity always holds for~$r<\rAM $ and~$x\in B_{\rAM /2}(0)$. 
 
 Recall that 
 \begin{align}
  \frac{\p H(x,r)}{\p r}
  = \frac{n-1}{r}H(x,r) + 2D(x,r)
  +\int_{\p B_r(x)} |\psi|^2 W(r,\theta)\ds_r. 
 \end{align}
 Thus 
 \begin{align}
  \frac{\frac{\p }{\p r}H(x,r)}{ H(x,r)}
  = \frac{n-1}{r} + \frac{2}{r}N(x,r) + \mathscr{W}(x,r), 
  \quad \mbox{with }
   \mathscr{W}(x,r) 
  \coloneqq \frac{\int_{\p B_r(x)} |\psi|^2 W(r,\theta)\ds_r}{\int_{\p B_r(x)} |\psi|^2 \ds_r}
 \end{align}
 and~$|\mathscr{W}(x,r)| \le \CCoord r$.
 It follows that, for~$s,t\in (0, \rAM -|x|)$, 
 \begin{align}
  \frac{H(x,t)}{H(x,s)}
  =\parenthesis{\frac{t}{s}}^{n-1-2C_N} \exp\parenthesis{\int_s^t \frac{2(N(x,r)+ C_N)}{r}\dd r} 
  \exp\parenthesis{\int_s^t \mathscr{W}(r)\dd r}. 
 \end{align}

\

Let~$R_3 <R_2 < R_1 < 1-|x|$, which are to be chosen later. 
The argument involves changing of centers and radii. 
We split it into three steps. 

\
 
\noindent\textbf{Step 1}.
Since~$H(x,r)$ is non-decreasing in~$r\in (0,\rAM )$, we have 
\begin{align}
 &\int_{B_{R_1}(x)} |\psi|^2 \dv_g 
 \ge  \int_{B_{R_1}(x)\setminus B_{R_2}(x)}|\psi|^2 \dv_g
     =\int_{R_2}^{R_1} H(x,r)\dd r \\ 
 =& \int_{R_2}^{R_1} \dd r\cdot H(x,R_2)\parenthesis{\frac{r}{R_2}}^{n-1-2C_N}\exp\parenthesis{\int_{R_2}^r\frac{2(N(x,t)+C_N)}{t}\dd t} \exp\parenthesis{\int_{R_2}^r \mathscr{W}(x,t)\dd t}\\
 =&\frac{H(x,R_2)}{R_2^{n-1-2C_N}} 
   \int_{R_2}^{R_1}\dd r\cdot  r^{n-1-2C_N}\exp\parenthesis{\int_{R_2}^r\frac{2(N(x,t)+C_N)}{t}\dd t} \exp\parenthesis{\int_{R_2}^r \mathscr{W}(x,t)\dd t}.
\end{align}
As observed before, for~$r\in [R_2, R_1]$, 
\begin{align}
 \int_{R_2}^r \mathscr{W}(x,t)\dd t
 \ge -\frac{\CCoord }{2}(r^2 - R_2^2)
 \ge -\frac{\CCoord }{2}(R_1^2 - R_2^2), 
\end{align}
and by the almost monotonicity of frequency function, 
\begin{align}
 \int_{R_2}^r\frac{2(N(x,t)+C_N)}{t}\dd t
 \ge& 2e^{\frac{C_{am}}{\beta+1}(R_2^{\beta+1} - R_1^{\beta+1})}(N(x, R_2)+C_N)\int_{R_2}^r \frac{1}{t}\dd t\\
 =& 2e^{\frac{C_{am}}{\beta+1}(R_2^{\beta+1} - R_1^{\beta+1})}(N(x, R_2)+C_N)\ln\frac{r}{R_2}.
\end{align}
Thus, by denoting~$A_1 \equiv 2e^{\frac{C_{am}}{\beta+1}(R_2^{\beta+1} - R_1^{\beta+1})}(N(x, R_2)+C_N)$, we have 
\begin{align}\label{eq:lower bound for R1}
 \int_{B_{R_1}(x)} |\psi|^2 \dv_g 
 \ge& \frac{H(x,R_2)}{R_2^{n-1-2C_N}}e^{-\frac{\CCoord }{2}(R_1^2 -R_2^2)}\int_{R_2}^{R_1} r^{n-1-2C_N}\parenthesis{\frac{r}{R_2}}^{A_1} \dd r \\
 =& \frac{H(x,R_2)}{R_2^{n-1-2C_N}}\frac{e^{-\frac{\CCoord }{2}(R_1^2 -R_2^2)}}{R_2^{A_1}}\frac{1}{n-2C_N+A_1} \parenthesis{R_1^{n-2C_N+A_1}- R_2^{n-2C_N+A_2}} \\
 =& e^{-\frac{\CCoord }{2}(R_1^2 -R_2^2)}\frac{R_2 H(x,R_2)}{n-2C_N+A_1}\parenthesis{\parenthesis{\frac{R_1}{R_2}}^{n-2C_N+A_1}-1}. 
\end{align}
On the other hand, we can bound the LHS from above as follows. 
Indeed, 
\begin{align}
 \int_{B_{R_1}(x)} |\psi|^2 \dv_g 
 \le&\int_{B_{R_1+|x|}(0)} |\psi|^2 \dv_g
    =\int_0^{R_1+|x|} H(0,r)\dd r.
\end{align}
By a change of variables~$\rho=\frac{R_1+|x|}{R_3} \rho$ where~$\rho\in [0,R_3]$, we get 
\begin{align}
 &\int_0^{R_1+|x|} H(0,r)\dd r
 =\int_0^{R_3} H(0,\frac{R_1+|x|}{R_3}\rho) \frac{R_1+|x|}{R_3}\dd\rho \\
 =&\parenthesis{\frac{R_1+|x|}{R_3}}^{n-2C_N}\int_0^{R_3} H(0,\rho)\exp\parenthesis{\int_{\rho}^{\frac{R_1+|x|}{R_3}\rho} \frac{2(N(0,t)+C_N)}{t}\dd t } \\ &\qquad\qquad\qquad \qquad \qquad \qquad \qquad\cdot \exp\parenthesis{\int_\rho^{\frac{R_1+|x|}{R_3}\rho} \mathscr{W}(0,t)\dd t }\dd\rho.
\end{align}
Note that 
\begin{align}
 \int_\rho^{\frac{R_1+|x|}{R_3}\rho} \mathscr{W}(0,t)\dd t
 \le \frac{\CCoord }{2}\parenthesis{\parenthesis{\frac{R_1+|x|}{R_3}}^2 - 1}\rho^2 \le \frac{\CCoord }{2}(R_1+|x|)^2 \le\frac{\CCoord }{2}
\end{align}
where we have used the conventional assumption~$r_3=1$, and that
\begin{align}
 \int_{\rho}^{\frac{R_1+|x|}{R_3}\rho} \frac{2(N(0,t)+C_N)}{t}\dd t
 \le&2(N(0,1)+ C_N)e^{\frac{C_{am}}{\beta+1} (1-\rho^{\beta+1})} \int_\rho^{\frac{R_1+|x|}{R_3}\rho}\frac{\dd t}{t} \\
 \le&2(N(0,1)+ C_N)e^{\frac{C_{am}}{\beta+1} }\ln\parenthesis{\frac{R_1+|x|}{R_3}}.
\end{align}
Hence, denoting~$A_2\equiv2(N(0,1)+ C_N)e^{\frac{C_{am}}{\beta+1} } $, we have 
\begin{align}\label{eq:upper bound for R1}
 \int_{B_{R_1}(x)}|\psi|^2\dv_g
 \le&e^{\frac{\CCoord }{2}(R_1+|x|)^2} \parenthesis{\frac{R_1+|x|}{R_3}}^{n-2C_N + A_2}\int_0^{R_3} H(0,\rho)\dd\rho \\
 =& e^{\frac{\CCoord }{2}(R_1+|x|)^2} \parenthesis{\frac{R_1+|x|}{R_3}}^{n-2C_N + A_2} \int_{B_{R_3}(0)} |\psi|^2\dv_g.
\end{align}
Combine~\eqref{eq:lower bound for R1} and~\eqref{eq:upper bound for R1} to obtain 
\begin{multline}\label{eq:comparison in step1}
 e^{-\frac{\CCoord }{2}(R_1^2 -R_2^2)}\frac{R_2 H(x,R_2)}{n-2C_N+A_1}\parenthesis{\parenthesis{\frac{R_1}{R_2}}^{n-2C_N+A_1}-1} \\
 \le  e^{\frac{\CCoord }{2}(R_1+|x|)^2} \parenthesis{\frac{R_1+|x|}{R_3}}^{n-2C_N + A_2} \int_{B_{R_3}(0)} |\psi|^2\dv_g, 
\end{multline}
with
\begin{align}
 A_1\equiv 2e^{\frac{C_{am}}{\beta+1}(R_2^{\beta+1} - R_1^{\beta+1})}(N(x, R_2)+C_N), & & 
 A_2\equiv2(N(0,1)+ C_N)e^{\frac{C_{am}}{\beta+1} }.
\end{align}

\noindent\textbf{Step 2.}
If~$B_{R_3}(0)\subset B_{R_2}(x)$ then, by the coarea formula 
\begin{align}
  &\int_{B_{R_3}(0)} |\psi|^2\dv_g 
 \le \int_{B_{R_2}(x)} |\psi|^2\dv_g 
    =\int_0^{R_2} H(x,r)\dd r \\
 =& \int_0^{R_2} H(x,R_2)\parenthesis{\frac{r}{R_2}}^{n-1-2C_N}
 \exp\parenthesis{\int_{R_2}^r \frac{2(N(x,t)+C_N)}{t}\dd t}
 \exp\parenthesis{\int_{R_2}^r \mathscr{W}(x,t)\dd t}\dd r \\
 =&\frac{R_2 H(x,R_2)}{R_2^{n-2C_N}}\int_0^{R_2}r^{n-1-2C_N}\exp\parenthesis{\int_r^{R_2} - \frac{2(N(x,t)+C_N)}{t}\dd t}
 \exp\parenthesis{\int_r^{R_2}- \mathscr{W}(x,t)\dd t}\dd r. 
\end{align}
Since 
\begin{align}
 \int_r^{R_2}- \mathscr{W}(x,t)\dd t
 \le \frac{\CCoord }{2}(R_2^2 - r^2) \le \frac{\CCoord }{2}R_2^2, 
\end{align}
\begin{align}
 \int_r^{R_2} - \frac{2(N(x,t)+C_N)}{t}\dd t
 \le&\int_r^{R_2}\frac{-2}{t}(N(x,0)+C_N) e^{\frac{C_{am}}{\beta+1}(-t^{\beta+1})} \\
 =&2(N(x,0)+C_N) e^{-\frac{C_{am}}{\beta+1}R_2^{\beta+1}} \ln\frac{r}{R_2}
 =A_3 \ln\frac{r}{R_2}, 
\end{align}
where~$A_3\equiv 2(N(x,0)+C_N) e^{-\frac{C_{am}}{\beta+1}R_2^{\beta+1}}$, 
we have the estimate 
\begin{align}
 \int_{B_{R_3}(0)}|\psi|^2 \dv_g 
 \le& e^{\frac{\CCoord }{2}R_2^2}\frac{R_2 H(x,R_2)}{n-2C_N + A_3}. 
\end{align}
Substituting this into~\eqref{eq:comparison in step1}, we obtain 
\begin{align}\label{eq:estimate for N-1}
 \parenthesis{\frac{R_1}{R_2}}^{n-2C_N+ A_1} 
 \le 1+ e^{\frac{\CCoord }{2}[(R_1+|x|)^2+R_1^2]}\parenthesis{\frac{R_1+|x|}{R_3}}^{n-2C_N+A_2} \frac{n-2C_N+A_1}{n-2C_N+A_3}. 
\end{align}
Moreover, by the almost monotonicity of frequency function, 
\begin{align}
 A_3= 2e^{-\frac{C_{am}}{\beta+1}R_2^{\beta+1}} (N(x,0)+C_N)
 \le 2 (N(x,R_2)+C_N) =  e^{\frac{C_{am}}{\beta+1}(R_1^{\beta+1} - R_2^{\beta+1})} A_1. 
\end{align}
In particular, since both~$R_1$ and~$R_2$ are small (since~$\rAM $ should be small, although we notationally take~$\rAM =1$ here), we have 
\begin{align}
 \frac{n-2C_N+ A_1}{n-2C_N + A_3} \in (\frac{1}{2}, 1+A_1). 
\end{align}
Thus from\eqref{eq:estimate for N-1} we get 
\begin{align}
 \parenthesis{\frac{R_1}{R_2}}^{n-2C_N+ A_1} 
 \le& 3 e^{\frac{\CCoord }{2}[(R_1+|x|)^2+R_1^2]}\parenthesis{\frac{R_1+|x|}{R_3}}^{n-2C_N+A_2}\frac{n-2C_N+A_1}{n-2C_N+A_3}  \\
 \le&  3A_1 e^{\CCoord }\parenthesis{\frac{R_1+|x|}{R_3}}^{n-2C_N+A_2}, 
\end{align}
which implies 
\begin{multline}
 (n-2C_N+ 2e^{\frac{C_{am}}{\beta+1}(R_2^{\beta+1}-R_1^{\beta+1})}(N(x,R_2)+C_N) ) \ln\frac{R_1}{R_2} \\
 \le \ln(N(x,R_2)+C_N) + \CCoord + \ln 6 
   + (n-2C_N + 2e^{\frac{C_{am}}{\beta+1}} (N(0,1)+C_N)) \ln\frac{R_1+|x|}{R_3}\,,
\end{multline}
and then
\begin{align}
 N(x,R_2)+C_N \le C(N(0,1)+C_N+1)
\end{align}
for some~$C=C(R_1, R_2, R_3, C_{am},\beta)$. 

Finally, to see that the above estimate is uniform for~$x\in B_{1/4}(0)$, we may take~$\eps=\frac{1}{8}$,
\begin{align}
 \rho\equiv \frac{1}{2}-2\eps -|x| =\frac{1}{4}-|x|, 
\end{align}
and 
\begin{align}
 R_1 =\frac{1}{2}+\rho +\eps, 
 & & 
 R_2=\frac{1}{2}+\rho, 
 & & 
 R_3=\eps=\frac{1}{8}. 
\end{align}
This conclude the proof.

\end{proof}

\begin{cor}
 Let~$(M^n,g)$ be a compact manifold and~$\psi$ solve~\eqref{eq:Dirac eq-V}, with~$V$ of type \eqref{eq:Vf} or \eqref{eq:Vq}. 
 Let~$N_\psi(x,r)$ denote the corresponding frequency function. 
 Then the set 
 \begin{align}
  \braces{N_\psi(x,0)\mid x\in M}
 \end{align}
 is unifomly bounded in~$\R_+$ (say by~$\cO_{\max}$). 
\end{cor}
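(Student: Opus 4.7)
The plan is to combine the local uniform bound of Lemma \ref{lem:Nunifbound} with the compactness of $M$, reducing the desired global bound to finitely many applications of the local estimate.

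First, I would reduce the problem of bounding $N_\psi(x,0)$ to that of bounding $N_\psi(x,r_0)$ for some fixed $r_0>0$. Indeed, by the almost monotonicity (Lemma \ref{lem:monotonicity}), for every $x\in\cZ(\psi)$ and $r\in(0,\rAM)$,
\begin{align}
 N_\psi(x,0)+C_N \le \exp\parenthesis{\tfrac{\CAM}{\beta+1}r^{\beta+1}}(N_\psi(x,r)+C_N).
\end{align}
Hence it is enough to produce a constant $K>0$ such that $N_\psi(x,r_0)\le K$ uniformly in $x\in\cZ(\psi)$, for some fixed $r_0\in(0,\rAM)$ independent of $x$.

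Next, since $M$ is compact, the nodal set $\cZ(\psi)$ is a closed subset of a compact manifold, hence compact. I would fix $r_0=\rAM/4$ and cover $\cZ(\psi)$ by a finite collection of geodesic balls $\{B_{\rAM/4}(x_i)\}_{i=1}^{K_0}$ with centers $x_i\in\cZ(\psi)$; such a cover exists by compactness. For each center $x_i$, the quantity $N_\psi(x_i,\rAM)$ is a finite real number (the frequency function is well-defined on $(0,\rAM)$ thanks to the unique continuation property derived after Lemma \ref{lemma:almost monotonicity-differential form}).

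Now for any $x\in\cZ(\psi)$ there exists $i$ with $x\in B_{\rAM/4}(x_i)$, and the geometric condition $B_{2r_0}(x)\subset B_{\rAM}(x_i)$ is satisfied since $|x-x_i|+2r_0\le 3\rAM/4<\rAM$. Applying Lemma \ref{lem:Nunifbound} (with basepoint $x_i$) yields
\begin{align}
 N_\psi(x,r_0)\le C_4\parenthesis{N_\psi(x_i,\rAM)+1}\le C_4\parenthesis{\max_{1\le i\le K_0} N_\psi(x_i,\rAM)+1}\eqqcolon K.
\end{align}
Combining with the first step, one obtains the uniform bound
\begin{align}
 N_\psi(x,0)\le \exp\parenthesis{\tfrac{\CAM}{\beta+1}r_0^{\beta+1}}(K+C_N)-C_N\eqqcolon \cO_{\max}
\end{align}
valid for every $x\in\cZ(\psi)$, which is the claim.

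The main technical point is really that one can choose a single $r_0$ that works simultaneously for every $x\in\cZ(\psi)$; this is guaranteed by the compactness of $\cZ(\psi)$ together with the fact that $\rAM$ in Theorem \ref{thm:AlmostMonotonicity} depends only on the geometry of $M$ and on $V$, not on $\psi$ or on the basepoint. No new estimates beyond those already established in Section \ref{sec:freq} should be needed.
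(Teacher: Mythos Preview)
Your argument is correct and is exactly the standard covering argument the paper has in mind; you have simply written out the details (almost monotonicity to pass from $N(x,0)$ to $N(x,r_0)$, a finite cover of the compact set $\cZ(\psi)$, and an application of Lemma~\ref{lem:Nunifbound} at each center). The only cosmetic point is that the corollary is stated for all $x\in M$, but this is harmless since $N_\psi(x,0)=0$ whenever $\psi(x)\neq 0$.
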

This follows by a stardard covering argument, where the cover is finite since~$M$ is compact.

\section{Stratification of the nodal set}\label{sec:stratification}
Before proving Theorem \ref{thm:Stratification} we need to establish various technical results. 
Our strategy is motivated from that in \cite{Han1994singular}, where the author deals with the singular set of a scalar function solving a second order elliptic equation.
\medskip

Let~$\psi \in C^{1,\alpha}(B_1,\C^{N})$ be a solution to \eqref{eq:Dirac eq-f}, under the assumptions of \ref{thm:Stratification}. 
\medskip
Thanks to \ref{lem:Z1estimate} we only need to deal with $\cZ_{\geq 2}$, that is, with points where the spinor vanishes at least to order two.

Recall that 
\[
\cZ(\psi)=\bigcup_{k\geq 1}\cZ_k(\psi)\,.
\]
Then by \ref{lem:Nunifbound} one sees that for any $r\in(0,1)$ there exists an integer $\overline k =\overline k(r)$ such that
\[
\cZ_k(\psi)\cap B_r=\emptyset\,,\qquad \forall k>\overline k\,.
\]
Since locally in~$B_r$ we have~$\psi=\beta(\varphi)$, see Section 2.1, where~$\beta$ is a bundle isomorphism, we have 
\begin{align}
 \cZ_k(\psi)\cap B_r = \cZ_k(\varphi)\cap B_r. 
\end{align}
This fact will be used implicitly in this section and helps us to get control of~$\cZ(\psi)$ via~$\varphi$; loosely speaking one can identify~$\psi$ locally with~$\varphi$ without loss of much information. 

\

We are interested in studying the local behavior of $\psi$ near each point in the nodal set. To this aim, for each $p\in \cZ(\psi) \cap B_{1/2}$, for any $r\in(0,(1-\vert p\vert)/2)$ we define 
\begin{equation}\label{eq:rescalepsi}
\psi_{p,r}(x):=\frac{\psi(p+rx)}{\left(\fint_{\p B_r(p)} \vert \psi\vert^2\,\ds_r\right)^{1/2}}\,,\qquad x\in B_2\,.
\end{equation}
Using \ref{prop:decomp-k}, we see that
\[
\psi_{p,r}\to \Psi_p \,,\qquad \mbox{in $L^2(B_2(0))$, as $r\to0^+$,}
\]
where $\Psi_p$ coincides with the leading term $P$ in the expansion \eqref{eq:decomp-k}, normalized so that $\Vert \varphi_p\Vert_{L^2(\p B_1)}=1$. 
As such, the spinor $\Psi_p$ solves $\D_{g_0}\Psi_p=0$, with respect to the Euclidean metric. 
\begin{dfn}
The spinor $\Psi_p$ defined above is called a \emph{homogeneous blow-up} of $\psi$ at the point $p$.
\end{dfn}
By the above argument, the blow-up is unique. 
Notice that this fact might not be true in full generality, but it is not unexpected in this case since the solution is already known to be $C^{1,\alpha}$ and we have the decomposition in \ref{lem:decomp-spinor}. 
When it does not cause ambiguity, we will simply denote the blow-up by $\Psi$, for simplicity.
\smallskip

Observe that since $\Psi$ is smooth Euclidean spinor, the vanishing order at its nodal points is understood in classical sense. Thus
\[
\cZ_k(\Psi)=\{x\in \R^n\,:\, \partial^\alpha \Psi^j(x)=0\,,\forall \vert \alpha\vert\leq k-1; \exists 1\le j\le N, \;\exists \vert\beta\vert=k+1\,, \partial^\beta\Psi^j(x)\neq0\}\,.
\]

\begin{prop}
$\cZ_k(\Psi)$ is a linear subspace and there holds
\begin{equation}\label{eq:Zinv}
\Psi(x)=\Psi(x+y)\,,\qquad\forall x\in\R^n\,,\forall y\in\cZ_k(\Psi)\,.
\end{equation}
\end{prop}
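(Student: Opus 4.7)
The plan is to exploit the fact, established in \ref{prop:decomp-k}, that every non-zero component of the blow-up $\Psi$ is a homogeneous harmonic polynomial of degree exactly $k$. This rigidity reduces both claims to algebraic manipulations on polynomials.

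First I would prove the translation invariance identity \eqref{eq:Zinv} directly. Fix $y \in \cZ_k(\Psi)$. Since each $\Psi^j$ is a polynomial of degree at most $k$, the Taylor expansion around $y$ is exact and terminates:
\begin{equation}
\Psi^j(x+y) = \sum_{|\alpha|\le k} \frac{\p^\alpha \Psi^j(y)}{\alpha!}\, x^\alpha .
\end{equation}
By definition of $\cZ_k(\Psi)$, we have $\p^\alpha \Psi^j(y)=0$ for every $|\alpha|\le k-1$ and every $j$, so only the top-order terms survive. But for $|\alpha|=k$ the derivative $\p^\alpha \Psi^j$ is a constant (being a top-order derivative of a degree-$k$ polynomial), hence $\p^\alpha \Psi^j(y)=\p^\alpha \Psi^j(0)$. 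Therefore
\begin{equation}
\Psi^j(x+y) = \sum_{|\alpha|=k}\frac{\p^\alpha \Psi^j(0)}{\alpha!}\, x^\alpha = \Psi^j(x),
\end{equation}
which proves \eqref{eq:Zinv}.

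Next I would deduce that $\cZ_k(\Psi)$ is a linear subspace. It contains $0$ since, by construction, $\Psi$ is a non-trivial homogeneous polynomial of degree $k$, so it vanishes to order exactly $k$ at the origin. For closure under addition, given $y_1,y_2\in\cZ_k(\Psi)$, differentiating the identity $\Psi(x+y_1)=\Psi(x)$ yields $\p^\alpha\Psi(x+y_1)=\p^\alpha \Psi(x)$ for every multi-index $\alpha$. Setting $x=y_2$ and using $\p^\alpha\Psi^j(y_2)=0$ for $|\alpha|\le k-1$ gives $\p^\alpha\Psi^j(y_1+y_2)=0$ in the same range, so $y_1+y_2\in\cZ_k(\Psi)$ (the non-degeneracy condition at order $k$ is automatic, since $\Psi$ itself is not identically zero). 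For closure under scalar multiplication, I use that $\p^\alpha\Psi^j$ is homogeneous of degree $k-|\alpha|$, hence
\begin{equation}
\p^\alpha\Psi^j(\lambda y) = \lambda^{k-|\alpha|}\, \p^\alpha\Psi^j(y) = 0, \qquad \forall\, |\alpha|\le k-1, \ \lambda\in\R,
\end{equation}
showing $\lambda y\in\cZ_k(\Psi)$.

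There is no substantial obstacle here: the whole argument rests on the fact that $\Psi$ is polynomial of a fixed degree, which collapses every Taylor expansion to a finite sum and turns analytic conditions into algebraic ones. The only mild point that needs attention is making sure the non-vanishing condition of order $k$ in the definition of $\cZ_k(\Psi)$ is preserved under sum and scaling; this follows from the fact that translation by $y\in \cZ_k(\Psi)$ is a symmetry of $\Psi$, so the highest-order derivatives (which are constants) are unchanged and remain non-zero.
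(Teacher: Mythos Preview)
Your proof is correct and follows essentially the same approach as the paper: both use the finite Taylor expansion of the degree-$k$ polynomial $\Psi^j$ around a point $y\in\cZ_k(\Psi)$ to obtain the translation invariance, and then use homogeneity to get closure under scalar multiplication. Your version is in fact slightly more complete, since you spell out closure under addition and explicitly address the non-degeneracy condition at order $k$, which the paper leaves as ``not hard to see.''
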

\begin{proof}
This is a standard fact for the homogeneous harmonic polynomials.
Indeed, since the components of $\Psi$ are homogeneous polynomials, it is immediate that $0\in\cZ_k(\Psi)$.
Take $y\in\cZ_k(\Psi)$ and $j=1,\ldots, N$, so that 
\[
\partial^\alpha\Psi^j(y)=0\,,\qquad \forall \vert \alpha\vert\leq k-1\,.
\]
Assuming that 
\[
\Psi^j(x)=\sum_{\vert\alpha\vert=k}a^j_\alpha x^\alpha\,,
\]
then
\[
\Psi^j(x)=\sum_{\vert \alpha\vert=k}a^j_\alpha(x-y)^\alpha\,,
\]
and hence
\[
\Psi(x+y)=\Psi(x)\,,\qquad\forall x\in\R^n,.
\]
Again, since each $\Psi^j$ is a homogeneous polynomial, then 
\[
\Psi(x+\lambda y)=\Psi(x)\,,\qquad\forall x\in\R^n,\lambda\in\R\,,
\]
and 
\[
\partial^\alpha \Psi(\lambda y)=0\,,\qquad \forall \vert \alpha\vert\leq k-1\,.
\]
Thus $\lambda\in \cZ_k(\Psi)$ for any $\lambda\in\R$. 
Then it is not hard to see that $\cZ_k(\Psi)$ is a \emph{linear subspace} and \eqref{eq:Zinv} holds.
\end{proof}

Observe that $\Psi$ is essentially a function of $(n-\dim\cZ_k(\psi))$-variables up to a change of variables, thanks to \eqref{eq:Zinv}.
We claim that
\begin{equation}\label{eq:Zdim}
\dim \cZ_k(\Psi)\leq n-2\,.
\end{equation}
Indeed, by contradiction, assume $\dim\cZ_k(\Psi)=n-1$. 
Then the nonzero components of $\Psi$ would be degree~$k$ monomials of one variable, say~$(x^1)^k$. 
Combining the fact that $\D\Psi=0$ and $\D^2=-\Delta$, on the Euclidean space, then one finds $-\Delta\Psi=0$, and thus $k=0$ or $k=1$. But this is impossibile, since we are assuming $k\geq2$.
\medskip

Now fix $y\in\cZ_k(\psi)$ and consider the corresponding homogeneous blow-up $\Psi_y$. Set
\begin{equation}\label{eq:ldim}
\cZ_k^l(\psi)=\{y\in\cZ_k(\psi)\,:\,\dim\cZ_k(\Psi_y)=l\}\,,
\end{equation}
for $l=0,1,\ldots,n-2$.

\begin{lemma}\label{lem:Pconv}
Let $(y_m)\subseteq\cZ_k(\psi)$ be a sequence of points such that $y_m\to y\in\cZ_k(\psi)$. 
Consider the leading terms $P^m_k, P_k$ of $\varphi=\beta^{-1}(\psi)$ in the decomposition \eqref{eq:decomp-k} at~$y_m$ and $y$, respectively. Then
\[
P^m_k\to P_k\,,\qquad \mbox{as $m\to\infty$}\,,
\]
in $C^k$-norm, on any compact subset of $\R^n$.
\end{lemma}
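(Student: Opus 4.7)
My plan is to characterize $P^m_k$ as a rescaled pointwise limit of $\varphi$ at $y_m$, and then combine continuity of $\varphi$ with a uniform-in-$m$ remainder estimate via a diagonal argument. Since $y_m, y \in \cZ_k(\psi) = \cZ_k(\varphi)$, Proposition~\ref{prop:decomp-k} applied at each $y_m$ and at $y$ gives decompositions
\[
 \varphi(y_m + x) = P^m_k(x) + Q^m_k(x), \qquad \varphi(y + x) = P_k(x) + Q_k(x),
\]
on a ball around the origin, with $P^m_k, P_k$ nonzero $\D_{g_0}$-harmonic polynomial vectors of degree $k$ and remainders satisfying, for every $\delta \in (0,1)$, $\vert Q^m_k(x) \vert \leq C_m(\delta)\,\vert x \vert^{k+\delta}$. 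Rescaling yields
\[
 r^{-k}\varphi(y_m + rx) = P^m_k(x) + O\bigl(C_m(\delta)\, r^\delta \vert x \vert^{k+\delta}\bigr)
\]
uniformly on compact sets; in particular, $P^m_k(x) = \lim_{r \to 0^+} r^{-k}\varphi(y_m + rx)$ pointwise, and analogously for $P_k$.

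\textbf{Uniform remainder estimate.} The core technical step is to show that, for $m$ sufficiently large, the above decompositions hold on a common ball $B_{R_0}$ and with a constant $C_m(\delta) \leq C_0(\delta)$ independent of $m$. Revisiting the proof of Lemma~\ref{lem:decomp-spinor}, $C(\delta)$ is produced by the Taylor-type expansion of the Euclidean Green kernel $\G_0$ applied to $\D_{g_0}\varphi$ on $B_{R_0}$ and to the boundary values of $\varphi$ on $\p B_{R_0}$. Since $\varphi$ is a fixed $C^{1,\alpha}$ spinor, both $\Vert \D_{g_0}\varphi \Vert_{L^\infty}$ and $\Vert \varphi \Vert_{C^{1,\alpha}}$ on a neighborhood of $y$ are finite and independent of $m$, provided $y_m$ eventually lies in that neighborhood. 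A uniform choice of radius $R_0$ is possible by local compactness, and the fact that each $y_m$ has vanishing order exactly $k$ (the defining feature of $\cZ_k(\psi)$) fixes the degree of $P^m_k$ uniformly; the almost monotonicity of the frequency function (Theorem~\ref{thm:AlmostMonotonicity}) together with Lemma~\ref{lem:Nunifbound} guarantees that no singular behavior in the vanishing order spoils these uniform bounds.

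\textbf{Diagonal limit and conclusion.} Fix a compact $K \subset \R^n$ and $\eps > 0$. Using the uniform remainder, choose $r > 0$ small enough that
\[
 \sup_{m} \bigl\Vert r^{-k}\varphi(y_m + r\,\cdot\,) - P^m_k \bigr\Vert_{L^\infty(K)} < \tfrac{\eps}{3} \quad \text{and} \quad \bigl\Vert r^{-k}\varphi(y + r\,\cdot\,) - P_k \bigr\Vert_{L^\infty(K)} < \tfrac{\eps}{3}.
\]
For this fixed $r$, continuity of $\varphi$ implies $r^{-k}\varphi(y_m + r\,\cdot\,) \to r^{-k}\varphi(y + r\,\cdot\,)$ uniformly on $K$ as $m \to \infty$, so the triangle inequality yields $\Vert P^m_k - P_k \Vert_{L^\infty(K)} < \eps$ for $m$ large. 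Since $P^m_k$ and $P_k$ belong to the finite-dimensional space of polynomial maps $\R^n \to \R^{2N}$ of degree at most $k$, on which all norms are equivalent, this $L^\infty$-convergence on $K$ (which has nonempty interior) upgrades automatically to $C^k$-convergence on every compact subset of $\R^n$. The main obstacle I expect is precisely the uniform remainder estimate from the previous paragraph, which requires careful tracking of the constants appearing in the Green-function splitting in Lemma~\ref{lem:decomp-spinor} and Proposition~\ref{prop:decomp-k}; once that book-keeping is done, the remainder is a soft continuity-plus-finite-dimensionality consequence.
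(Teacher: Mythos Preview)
Your argument is correct and follows essentially the same route as the paper: both rely on the explicit Green-kernel construction behind Lemma~\ref{lem:decomp-spinor} to obtain uniform-in-$m$ control on the remainder, and then invoke finite-dimensionality of degree-$k$ homogeneous harmonic polynomials to upgrade uniform convergence to $C^k$. The paper is simply terser---it observes that $\varphi_m:=\varphi(y_m+\cdot)\to\varphi$ in $C^{1,\alpha}$ and then cites the proof of Lemma~\ref{lem:decomp-spinor} directly for $P^m_k\to P_k$, whereas you unpack this via the rescaled-limit characterization $P^m_k(x)=\lim_{r\to0}r^{-k}\varphi(y_m+rx)$ together with a diagonal argument; the underlying content is the same.
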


\begin{proof}
 Without loss of generality we assume~$y=0$ is the center of the local coordinate chart~$B_1\subset \R^n$. 
 Then 
 \begin{align}
  \varphi(x)= P_k(x)+ Q_k(x)
 \end{align}
 for some homogeneous harmonic~$P_k$, as in \ref{prop:decomp-k}. 
 Denote 
 \begin{align}
  \varphi_m(x)\coloneqq \varphi(y_m+x).
 \end{align}
 Then the corresponding decomposition for~$\psi_m$ at~$0$ is precisely given by~$P^m_k$. 
 
 Since~$\psi \in C^{1,\alpha}$, the spinors~$\varphi_m$ converges in~$C^{1,\alpha}(B_{1-\eps})$ for~$\eps>0$ small. 
 By the proof of \ref{lem:decomp-spinor} we see that~$P_k^m\to P_k$ uniformly in~$B_{1-{2\eps}}$.
 But the degree~$k$ homogeneous harmonic polynomials in~$\R^n$ constitute a finite dimesnional vector space, on which all the norms are equivalent, hence they also converge in~$C^k$. 
\end{proof}

\begin{rmk}
 The vanishiing order is well-known to be upper semi-continuous. 
 For a sequence~$y_m\in \cZ_k(\psi)$ which converges to~$y\in \cZ_l(\psi)$, then necessarily~$l\ge k$; and if~$l>k$, then actually~$P_k^m\to 0$ in~$C^k$. 
\end{rmk}

\begin{lemma}\label{lem:union}
The set $\cZ^l_k(\psi)$ is contained in the countable union of $l$-dimensional $C^1$ graphs, for any $l=0,1,\ldots, n-2$.
\end{lemma}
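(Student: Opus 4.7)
The strategy is to prove that $\cZ^l_k(\psi)$ satisfies a quantitative cone condition at each of its points, and then invoke a Whitney-type extension argument to conclude. The scheme mirrors the scalar case in \cite{Han1994singular}, but with the vector-valued leading polynomial $P_k$ replacing a single harmonic polynomial.

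\textbf{Step 1: cone condition at each point.} Fix $y\in \cZ^l_k(\psi)$ and let $V_y:=\cZ_k(\Psi_y)$, a linear subspace of dimension $l$. For $\eta>0$ set
\[
\mathcal{C}_\eta(V_y):=\braces{x\in\R^n:\dist(x,V_y)<\eta\,|x|}.
\]
I would prove that for every $\eta>0$ there exists $\delta=\delta(y,\eta)>0$ such that
\[
\cZ_k(\psi)\cap B_\delta(y)\subset y+\mathcal{C}_\eta(V_y).
\]
The argument is by contradiction: assume a sequence $y_m\to y$ with $y_m\in\cZ_k(\psi)$ and $z_m:=(y_m-y)/r_m\to z$, where $r_m:=|y_m-y|\to 0$ and $z\in S^{n-1}\setminus \mathcal{C}_\eta(V_y)$. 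By \ref{lem:Pconv} applied at the base point $y$, the leading polynomials $P^m_k$ of $\varphi=\beta^{-1}(\psi)$ at $y_m$ converge in $C^k$ (after appropriate normalization) to $P_k=\Psi_y$ on compact sets. Since $y_m\in\cZ_k(\psi)$, the rescaled decompositions $\varphi(y+r_m x)/r_m^k=P_k(x-z_m)+o(1)$ force $\Psi_y$ to vanish to order at least $k$ at~$z$; that is, $z\in\cZ_k(\Psi_y)=V_y$, contradicting $z\notin \mathcal{C}_\eta(V_y)$. The upper semicontinuity of the vanishing order (recorded in the remark following \ref{lem:Pconv}) is exactly what is needed here.

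\textbf{Step 2: decomposition into uniform pieces.} For positive integers $m,j$ define
\[
E_{m,j}:=\braces{y\in\cZ^l_k(\psi):\cZ_k(\psi)\cap B_{1/m}(y)\subset y+\mathcal{C}_{1/j}(V_y)}.
\]
By Step 1, $\cZ^l_k(\psi)=\bigcup_{j\ge 1}\bigcup_{m\ge 1}E_{m,j}$. Since the map $y\mapsto V_y$ is continuous on $\cZ^l_k(\psi)$ (again by \ref{lem:Pconv}, combined with the dimension condition), I would further partition each $E_{m,j}$ into countably many Borel subsets on which $V_y$ lies in a small neighborhood of a fixed $l$-plane $V_0$ in the Grassmannian. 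On such a piece $F\subset E_{m,j}$, the cone condition yields, uniformly for $y\in F$ and for $y'\in\cZ_k(\psi)\cap B_{1/m}(y)$, the estimate $\dist(y'-y,V_0)\le C/j\cdot|y'-y|$, i.e.\ the restriction of $F$ to small balls is the graph of a Lipschitz function over $V_0$ with Lipschitz constant $\le C/j$.

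\textbf{Step 3: Whitney extension.} The uniform cone condition on each piece~$F$, combined with the fact that the approximate tangent $l$-planes $V_y$ depend continuously on $y$, is the hypothesis of the classical Whitney extension theorem (see e.g.\ the argument in \cite[Sec.~4]{Han1994singular} in the scalar case, which transposes verbatim once the cone condition is established). This produces an $l$-dimensional $C^1$ graph containing~$F$. Taking the countable union over all pieces concludes.

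The \emph{main obstacle} is Step~1: one must carefully extract, from the $C^k$ convergence of \ref{lem:Pconv} under the rescaling by $r_m\to 0$, the fact that a limiting unit vector $z$ obtained from zeros $y_m\in\cZ_k(\psi)$ is not merely a zero of $\Psi_y$ but actually lies in the invariance subspace $\cZ_k(\Psi_y)$. This is where the assumption $y_m\in\cZ_k$ (equal order, not only $\ge k$) and the $C^k$ control on the principal part are jointly essential; the locally Lipschitz nonlinearity $V$ only affects the remainder $Q_k$ in \ref{prop:decomp-k}, which drops out in the rescaled limit.
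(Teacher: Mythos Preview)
Your overall strategy---cone condition at each point, then assembling into $C^1$ graphs---matches the paper. The genuine gap is in Step~1.

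You claim the rescaling $\varphi(y+r_m x)/r_m^k=P_k(x-z_m)+o(1)$ and that this forces $\Psi_y$ to vanish to order $k$ at $z$. But the decomposition at~$y$ only gives $\varphi(y+r_mx)/r_m^k=P_k(x)+o(1)$ in $C^1$, since the remainder $Q_k$ in Proposition~\ref{prop:decomp-k} is controlled only up to first derivatives; to write $P_k(x-z_m)+o(1)$ you would need the decomposition at~$y_m$, whose remainder constants are not uniform in $m$. From $C^1$ convergence you can only conclude $P_k(z)=0$ and $\nabla P_k(z)=0$, and for $k\ge 3$ the set $\{P_k=\nabla P_k=0\}$ can be strictly larger than $\cZ_k(P_k)$: for the harmonic cubic $P(x_1,x_2,x_3)=(x_1^2-x_2^2)x_3$ the former contains three lines through the origin while $\cZ_3(P)=\{0\}$. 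The upper semicontinuity you invoke from the remark after Lemma~\ref{lem:Pconv} concerns the order of the fixed spinor $\psi$ at varying points $y_m\to y$; what you actually need is upper semicontinuity of the order of the \emph{varying} rescaled spinors $\psi_{0,|y_m|}$ at $\xi_m\to\xi$, which is a different statement.

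The paper closes this gap via the frequency function: it shows the rescalings $\psi_{0,|y_m|}\to\Psi_y$ in $C^{1,\alpha}$ by writing down the rescaled Dirac equation, and then uses the almost monotonicity (Theorem~\ref{thm:AlmostMonotonicity}) to pass the order through the limit,
\[
k=\limsup_m\cO_{\psi_{0,|y_m|}}(\xi_m)\le \cO_{\Psi_y}(\xi),
\]
after which $\cO_{\Psi_y}(\xi)=k$ is automatic since $\Psi_y$ has degree $k$. This is the missing ingredient in your argument. Your Steps~2--3 are correct but heavier than needed: the paper obtains the uniform cone condition directly from the pointwise one plus the continuity $\ell_{y_m}\to\ell_y$ (itself a consequence of Lemma~\ref{lem:Pconv}), and reads off the $C^1$ graph without a Whitney extension.
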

\begin{proof}
Given $y\in\cZ_k^l(\psi)$, let $\ell_y$ be the $l$-dimensional linear space $\cZ_k(\Psi_y)\subseteq \R^n$.

\noindent{\bf Step 1.} We start by proving that, for any sequence of points $(y_m)\subseteq \cZ_k^l(\psi)$ which converges to~$y$, there holds
\begin{equation}\label{eq:angle}
\operatorname{angle}\langle \overline{y y_m}, \ell_y\rangle\to 0\,,
\end{equation}
where by $\overline{y y_m}$ we denote the segment connecting the points $y$ and $y_m$.

\

We may assume $y=0$ and up to subsequence $\xi_m=y_m/\vert y_m\vert\to\mathbb \xi\in \mathbb S^{n-1}$.
Then $\xi_m$ is a zero of the spinor $\psi_{0,\vert y_m\vert}$, defined as in \eqref{eq:rescalepsi}, with vanishing order~$k$.
\medskip

We now claim that 
\begin{equation}\label{eq:rescaleconv}
\psi_{0,\vert y_m\vert}\to \Psi_y \,,\qquad \mbox{in $C^{1,\alpha}$, as $m\to \infty$,}
\end{equation}
and that
\begin{equation}\label{eq:xinodal}
\xi\in\cZ_k(\Psi_y)=\ell_y\,.
\end{equation}
Note that~\eqref{eq:xinodal} immediately implies~\eqref{eq:angle}.

Indeed, since~$\psi$ is a solution of~\eqref{eq:Dirac eq-V}, the local spinor~$\varphi=\beta^{-1}(\psi)$ solves the equation~\eqref{eq:EuclideanDirac-V}, while the local Euclidean spinor~$\varphi_{0,|y_m|}\coloneqq\beta^{-1}(\psi_{0,|y_m|})$ solves the following
\begin{align}
 \D_{g_0} \varphi_{0,|y_m|} (x)
 =&\frac{ |y_m| }{\parenthesis{ \fint_{\p B_{|y_m|}} |\psi|^2\ds_{|y_m|} }^{\frac{1}{2}}} \widetilde{V}(\varphi)(|y_m|x)  \\
  &-\sum_{i,j}(b_i^j(|y_m|x)-\delta_i^j) \gamma_{g_0}(\p_i)\snabla^0_{p_i}\varphi_{0,|y_m|} (x) \\
   &-\frac{|y_m|}{4}\sum_{i,j,k}\Gamma^k_{ij}(|y_m|x) \gamma_{g_0}(\p_i)\gamma_{g_0}(\p_j)\gamma_{g_0}(\p_k)\varphi_{0,|y_m|}(x)
\end{align}
Since~$y=0$ and~$|y_m|\to 0$, the right hand sides converges locally uniformly to zero. 
Thus~$\varphi_{0,|y_m|}$ converges in~$C^{1,\alpha}(B_2)$ to a harmonic spinor, which is~$\Psi_y$; and so is~$\psi_{0,|y_m|}=\beta^{-1}(\varphi_{0,|y_m|})$. 
In particular, 
\begin{align}
 \limsup_{m\to+\infty} \cO_{\psi_{0,|y_m|}}(\xi_m)+C_N 
 \le& \limsup_{m\to+\infty} \exp\parenthesis{ \frac{\CAM}{\beta+1} r^{\beta+1}} \parenthesis{N_{\psi_{0,|y_m|}} (\xi_m, r) + C_N } \\
  = &\exp\parenthesis{ \frac{\CAM}{\beta+1} r^{\beta+1}} \parenthesis{N_{\Psi_y} (\xi, r) + C_N }
\end{align}
and then sending~$r\to 0+$ we see that 
\begin{align}
 k=\limsup_{m\to+\infty} \cO_{\varphi_{0,|y_m|}}(\xi_m)
 \le \cO_{\Psi_y}(\xi),
\end{align}
which is essentially the upper semi-continuity of the vanishing order, defined via frequency functions.  
But~$\Psi_y$ is a homogeneous harmonic polynomial of degree~$k$, so~$\cO_{\Psi_y}(\xi)=k$, thus~$\xi\in \cZ(\Psi_y)=\ell_y$.

\noindent{\bf Step 2.} 
By \eqref{eq:angle} we see that, for any $y\in\cZ_k^l(\psi)$ and~$\eps>0$ small, there exists~$r=r(y,\eps)>0$ such that
\begin{equation}\label{eq:localZ}
\cZ^l_k(\psi)\cap B_r(y)\subseteq B_r(y)\cap \mathscr{C}_\eps(\ell_y)\,,
\end{equation}
where~$\mathscr{C}(\ell_y)$ is the~$\eps$-cone around~$\ell_y$:
\[
\mathscr{C}_\eps(\ell_y):=\{z\in\R^n\,:\, \dist(z,\ell_y)\leq\eps\vert z\vert\}\,.
\]
Let $P^m_k$ and $P_k$ be the leading polynomial of $\psi$ at $y_m$ and at $y$, respectively. By Lemma \ref{lem:Pconv} one sees that 
\[
\ell_{y_m}\to\ell_y\,,\qquad \mbox{as $m\to\infty$}\,,
\]
as subspaces. 
Then \eqref{eq:localZ} holds uniformly near $y$, in the sense that for any $y\in\cZ_k^l(\psi)$ and for any $\eps>0$ small, there exists a radius $r=r(y,\eps)>0$ such that
\[
\cZ^l_k(\psi)\cap B_{r}(y)\subseteq C_{\eps}(\ell_z)\cap  B_r(y) \,,\qquad \mbox{for any }\; z\in\cZ_k^l(\psi)\cap B_r(z)\,.
\]
Thus, taking $\eps>0$ small enough this implies that $\cZ^l_k(\psi)\cap B_r(y)$ is contained in a $l$-dimensional Lipschitz graph, which in addition is $C^1$ by \eqref{eq:angle}.
\end{proof}

The regularity of the top-dimensional stratum $\cZ^{n-2}_k(\psi)$ can be improved.
\begin{lemma}\label{lem:n-2}
$\cZ^{n-2}_k(\psi)$ is contained in the countable union of $(n-2)$-dimensional $C^{1,\alpha'}$ manifolds, for some $0<\alpha'<1$.
\end{lemma}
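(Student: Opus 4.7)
The plan is to upgrade the $C^1$ graph structure from Lemma~\ref{lem:union} to $C^{1,\alpha'}$ in the top stratum, adapting the strategy developed by Han for scalar second-order elliptic equations. Fix $y \in \cZ^{n-2}_k(\psi)$ and choose normal coordinates around $y$ so that $\ell_y := \cZ_k(\Psi_y) = \Span\{e_3, \ldots, e_n\}$. Since the blow-up $\Psi_y$ is a homogeneous $\D_{g_0}$-harmonic spinor of degree $k$ whose full nodal set equals the $(n-2)$-plane $\ell_y$, it depends only on the variables $(x_1, x_2)$, and using the complex variable $z = x_1 + i x_2$ each of its nonzero components takes the form $a^j z^k + b^j \bar z^k$ with $(a^j, b^j) \neq (0, 0)$ for at least one $j$. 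In particular, $\Psi_y$ is non-degenerate in the directions transverse to $\ell_y$.

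The heart of the plan is to establish a quantitative rate for the convergence of the rescalings $\psi_{y,r} \to \Psi_y$: namely, there should exist $\alpha' \in (0,1)$ and $C > 0$ such that
\begin{align}
\|\psi_{y,r} - \Psi_y\|_{L^2(\p B_1)} \le C\, r^{\alpha'},
\end{align}
uniformly for $y$ in a compact subset of $\cZ^{n-2}_k$. This is a \L{}ojasiewicz-type estimate, obtained by iterating the almost monotonicity of Theorem~\ref{thm:AlmostMonotonicity} over dyadic scales and combining it with the local decomposition of Proposition~\ref{prop:decomp-k}, together with the uniform bound on the vanishing order from Proposition~\ref{prop:UniformBound}. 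A key ingredient is the spectral gap of the Dirac operator on the round sphere $\sph^{n-1}$ between the eigenspaces associated to degree-$k$ and degree-$(k+1)$ harmonic spinors, which forces the non-principal modes in the expansion to decay with a geometric rate.

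Once the rate is available, Lemma~\ref{lem:Pconv} strengthens to a H\"older statement: the map $y \mapsto \ell_y$ from $\cZ^{n-2}_k$ into the space of $(n-2)$-planes in $\R^n$ satisfies
\begin{align}
\dist(\ell_y, \ell_{y'}) \le C\, |y - y'|^{\alpha'}, \qquad y, y' \in \cZ^{n-2}_k,
\end{align}
locally. Substituting this into the proof of Lemma~\ref{lem:union} replaces the qualitative $\eps$-cone containment by a quantitative cusp-like inclusion, which upgrades the local $C^1$ graph to a graph of class $C^{1,\alpha'}$. A countable cover concludes the argument.

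The main obstacle is the quantitative convergence rate of the blow-up: going beyond the mere existence of the limit granted by the almost monotonicity formula requires a refined iteration of the monotonicity inequality that exploits the frequency gap between the principal and non-principal spherical harmonic modes. The remaining steps are then essentially linear-algebraic and follow from the structural results already established in the paper.
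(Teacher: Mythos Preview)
Your plan is workable in principle but takes a far more elaborate route than the paper, and the step you flag as the ``main obstacle'' is in fact unnecessary. The paper's proof is a three-line computation that extracts the H\"older rate directly from the remainder estimates in Proposition~\ref{prop:decomp-k}, with no \L{}ojasiewicz iteration, no spectral gap, and no epiperimetric argument.

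Concretely: fix $y=0\in\cZ^{n-2}_k(\psi)$ and split $x=(x_1,x_2)\in\R^2\times\ell_y$. The leading term $P_k$ depends only on $x_1$, and since its nonzero components are harmonic homogeneous of degree $k$ in two variables, $|\nabla P_k(x)|\ge c\,|x_1|^{k-1}$. For any nearby $x\in\cZ^{n-2}_k(\psi)$ we have $k\ge 2$, so $\nabla\varphi(x)=0$ and hence $\nabla P_k(x)=-\nabla Q_k(x)$. But Proposition~\ref{prop:decomp-k} already gives $|\nabla Q_k(x)|\le C\,|x|^{k-1+\delta}$, so
\[
|x_1|^{k-1}\le C\,|x|^{k-1+\delta},\qquad\text{i.e.}\qquad |x_1|\le C\,|x|^{1+\delta/(k-1)}.
\]
This is precisely the cusp inclusion that upgrades the $C^1$ graph of Lemma~\ref{lem:union} to $C^{1,\alpha'}$ with $\alpha'=\delta/(k-1)$.

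The point is that the quantitative decay you are trying to manufacture through frequency monotonicity and spherical harmonic gaps is already encoded in the decomposition $\varphi=P_k+Q_k$: the bound $|\nabla Q_k|\le C|x|^{k-1+\delta}$ \emph{is} the rate. Your approach would recover an equivalent estimate after considerably more work, and the step you call an obstacle (the \L{}ojasiewicz-type bound) is genuinely nontrivial to carry out in the spinorial setting, whereas the paper bypasses it entirely.
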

\begin{proof}
The proof is similar to \cite[Lemma 2.4]{Han1994singular}, and we give it here for completeness.  
Let $y\in\cZ^{n-2}_k(\psi)$. 
Take a local chart so that~$y=0$, and let $P_k$ be the spinor in \eqref{eq:decomp-k} near $y=0$, whose components are either zero or homogeneous harmonic polynomials of degree $k$, depending on two variables. Decomposing
\[
x=(x_1,x_2)\in \R^2\times \cZ_k(\varphi_0)=\R^n\,,
\]
consider $x\in\cZ_k^{n-2}(\psi)$ close to the origin~$y=0$. Since~$k\ge 2$, we have~$\nabla \psi(x)=0$, and hence
\[
\nabla P_k(x)=-\nabla (\varphi-P_k)(x)\,,
\]
and by \eqref{eq:decomp-k} we see that
\[
\vert x_1\vert^{k-1}\leq C \vert x\vert^{k-1+\delta} \,,
\]
that is 
\[
\vert x_1\vert \leq C \vert x\vert^{1+\frac{\delta}{k-1}}\,,
\]
where $C>0$ is a constant. By Lemma \ref{lem:union}, the local $(n-2)$-dimensional $C^1$ manifold containing the set $\cZ^{n-2}_k(\psi)$ near $p=0$ is actually of class $C^{1,\alpha'}$ with~$\alpha'\le \frac{\delta}{k-1}$.
\end{proof}

Collecting the above results we get the following
\begin{proof}[Proof of Theorem \ref{thm:Stratification}]
Recall that there holds
\[
\cZ(\psi)=\bigcup_{k\geq 1}\cZ_k(\psi)=\bigcup_{k\geq1}\bigcup_{l=0}^{n-2}\cZ^l_k(\psi)\,,
\]
where $\cZ_k(\psi)$ is the set of points where the spinor vanishes to order $k$. 
The sets $\cZ_k^l(\psi)$ are defined in \eqref{eq:ldim}. 
The claim thus follows combining Lemma \ref{lem:Z1estimate}, dealing with points of order $k=1$, while the sets $\cZ^l_k(\psi)$, $ 2\leq k\leq \cO_{\max}, l=0,\ldots, n-2$ are considered in Lemma \ref{lem:union} and finally, noting that~$2\leq k\leq \cO_{\max}<+\infty$ so that~$\alpha'$ can be uniformly chosen, Lemma \ref{lem:n-2} concludes the proof. .

\end{proof}

\end{document}